\documentclass[a4paper,10pt,leqno]{amsart}
\usepackage{amsfonts}
\usepackage{amsmath}
\usepackage{amssymb}
\usepackage[utf8]{inputenc}
\usepackage{hyperref}
\usepackage{color}
\usepackage[shortlabels]{enumitem}

\usepackage{graphicx}

\newcommand\C{\mathbb{C}}
\newcommand\R{\mathbb{R}}
\newcommand\Q{\mathbb{Q}}
\newcommand\Z{\mathbb{Z}}
\newcommand\N{\mathbb{N}}
\newcommand\HH{\mathbb{H}}

\newtheorem{theorem}{{Theorem}}[section]
\newtheorem{proposition}[theorem]{{Proposition}}%[section]
\newtheorem{isom.ext}[theorem]{{Trivial isometric extension}}%[section]
\newtheorem{definition}[theorem]{{Definition}}%[section]
\newtheorem{lemma}[theorem]{{Lemma}}%[section]
\newtheorem{corollary}[theorem]{{Corollary}}%[section]
\newtheorem{remark}[theorem]{{Remark}}%[section]
\newtheorem{example}[theorem]{{Example}}%[section]
\newenvironment{Ex}{\begin{example} \normalfont}{\end{example}}

\definecolor{purple}{rgb}{0.65,0.12,0.94}
\definecolor{forestgreen}{rgb}{0.4,0.64,0.13}
\hypersetup{
	colorlinks=true,
	linkcolor=red,
	filecolor=magenta,      
	urlcolor=black,
}

\title{Arithmetic structure of generalized Inoue--Bombieri manifolds}
\author [B. Flamencourt]{Brice Flamencourt}
\address{UMPA, CNRS, 
	\'Ecole Normale Sup\'erieure de Lyon, France}
\email{brice.flamencourt@ens-lyon.fr}

\author[A. Zeghib]{Abdelghani Zeghib}
\address{UMPA, CNRS, 
	\'Ecole Normale Sup\'erieure de Lyon, France}
\email{abdelghani.zeghib@ens-lyon.fr 
	\hfill\break\indent
	\url{http://www.umpa.ens-lyon.fr/~zeghib/}}

\subjclass[2020]{11F06, 22E40, 51D25, 53C35, 57S20}
\keywords{Lattices of Lie groups, Arithmetic groups, Riemannian foliations, Symmetric spaces.}

\begin{document}
\maketitle

\begin{abstract}
A Generalized Inoue--Bombieri (GIB) manifold $M$ is a compact quotient of a connected Riemannian product $\R^q \times (N,g _N)$ by a discrete subgroup of $\mathrm{Sim}(\R^q) \times \mathrm{Isom}(N,g_N)$. The flat factor induces a transversely Riemannian foliation whose leaf closures determine, up to a natural geometric modification, a torus fibration $M \to X$. The main goal of this article is to study the associated monodromy representation $\rho : \pi_1(X) \to \mathrm{GL}(n,\Z)$. We prove that the image of $\rho$ is a subgroup of a cocompact arithmetic lattice of a reductive group, and we discuss which groups may be realized as monodromy groups of GIB manifolds.

When $(N,g_N)$ is a symmetric space of non-compact type, the monodromy itself is arithmetic. Moreover, one may describe the fibration and the monodromy in terms of parabolic subgroups of the isometry group of $(N,g_N)$. This yields new examples of GIB manifolds, as well as obstructions, and opens the way toward a complete classification in this particular case.
\end{abstract}

\tableofcontents

\section{Introduction}

\subsection{GIB manifolds} Generalized Inoue--Bombieri (GIB) manifolds are compact manifolds admitting a natural Riemannian foliation. They were introduced in \cite{FZ2} with the aim of simplifying the framework for the study of locally conformally product manifolds (LCP).

Recall that a similarity (also called a homothety) between two Riemannian manifolds $(M_1,g_1)$ and $(M_2, g_2)$ is a diffeomorphism $\phi : M_1 \to M_2$ such that $\phi^*g_2 = \lambda^2 g_1$ for some real number $\lambda > 0$ called the ratio of the similarity. The group of similarities from a Riemannian manifold $(M,g)$ to itself is denoted by $\mathrm{Sim}(M)$. A {\em strict similarity} is a similarity whose ratio is different from $1$. This leads to the following definition of GIB manifolds:

\begin{definition} \label{GIBdef}
A generalized Inoue--Bombieri (GIB) manifold is the compact quotient of a product manifold $\R^q \times N$, where $q \ge 1$ and $(N, g_N)$ is a connected Riemannian manifold, by a discrete subgroup $\Gamma$ of $\mathrm{Sim} (\R^q) \times \mathrm{Isom}(N,g_N)$ acting properly, freely and cocompactly, and whose projection onto $\mathrm{Sim}(\R^q)$ contains a strict similarity.
\end{definition}

GIB manifolds are named after the Inoue--Bombieri surfaces of type $S^0$, which they generalize. These are complex manifolds obtained as compact quotients of $\C \times \HH^2$ by discrete subgroups of $\mathrm{Aut}(\C) \times \mathrm{Aut}(\HH^2) = \mathrm{Sim}^+(\R^2) \times \mathrm{Isom}(\HH^2)$.

Note that, throughout this text, $(N, g_N)$ will always denote the Riemannian factor of the universal cover of a GIB manifold on which the fundamental group acts by isometries.

\subsection{Monodromy group} \label{intromonodromy} The initial motivation for the present work was to understand the arithmetic data that arise naturally from the definition of GIB manifolds. To introduce them, we fix a GIB manifold $M$  of the form $\Gamma \backslash (\R^q \times N)$ and observe that the submersion $\R^q \times N \to N$ induces a foliation that descends to a Riemannian foliation $\mathcal F$ on the quotient. 
 
It was shown in \cite[Theorem~1.6]{FZ2} that, up to a natural geometric transformation, the closures of the leaves of $\mathcal F$ are the fibers of a torus fibration $M \to X$  over a compact base $X$ which inherits a natural Riemannian metric. This gives rise to a {\em monodromy representation} $\rho: \pi_1(X) \to \mathrm{GL}(n,\Z)$, where $n$ denotes the dimension of the tori (see \cite{Fla25, FZ2, Kou}, and  Section~\ref{SecMonodromy}  for  more details). Its image, usually denoted $D$ is called the {\em monodromy group} of the GIB manifold.

Using structural properties of GIB manifolds, we show that, as a subgroup of $\mathrm{GL}(n,\Z)$, the monodromy group $D$ satisfies the following several strong constraints:

 \begin{itemize} 
 \item there exists a nontrivial decomposition $\R^n = H \oplus V$ that is preserved by $D$. (In the terminology of GIB manifolds, 
 the subspace $H$ corresponds to the Euclidean factor $\R^q$ of the universal cover).
\item $H$ is an irrational subspace of $\R^n$, meaning that $H + \Z^n$ is dense in $\R^n$. 
\item $D$ acts on $H$ by similarities with respect to a fixed inner product $b$, and not all of these similarities are isometries.
\end{itemize}

\subsection{GIB arithmetic data} From the  previous properties of the monodromy group $D$ of a GIB manifold, we can define a purely  algebraic structure, that we will call {\em GIB arithmetic data}. It consists in giving a tuple $(n,H,V,b)$, where $\R^n = H \oplus V$, the subspace $H$ is irrational, and $b$ is an inner product on $H$. We  then define the automorphism group $\mathrm{Aut}(n,H,V,b)$  as  those elements of $\mathrm{GL}(n,\Z)$  preserving $H$ and $V$ and acting by similarities with respect to $b$ (on $H$).  We assume (in our definition of GIB arithmetic data) that this group  $\mathrm{Aut}(n,H,V,b)$ contains elements acting  by strict similarities on $H$.

Our main result about this abstact structure is: 

\begin{theorem} \label{structureThm} The automorphism group  $ {\bf D}= \mathrm{Aut}(n, H, V, b)$ of GIB arithmetic data  $(n, H, V, b)$ is a cocompact arithmetic group. More precisely  let $G$ be this Zariski closure. Then, $G$ is a $\Q$-anisotropic reductive group. In particular, $G$ is the almost-direct product of its center $T$ with a semisimple group $S$, and ${\bf D}_T :=  { \bf D} \cap T \neq 1$ and ${\bf D}_S :=  {\bf D} \cap S$ are cocompact lattices in $T$ and $S$ respectively. 
\end{theorem}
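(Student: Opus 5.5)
Let $L\subset \mathrm{GL}(n,\R)$ be the group of elements preserving $H$ and $V$ and acting on $H$ by $b$-similarities. The plan is to show that ${\bf D}=L\cap \mathrm{GL}(n,\Z)$ is an arithmetic subgroup of a $\Q$-group $G$, and then to prove $G$ is reductive and $\Q$-anisotropic. Godement's compactness criterion (Borel--Harish-Chandra) then makes ${\bf D}$ cocompact.

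\textbf{Step 1: defining $G$.} $L$ is a real algebraic group, but $H,V,b$ need not be defined over $\Q$. So I take $G$ to be the Zariski closure of ${\bf D}$ in $\mathrm{GL}_n$; it is defined over $\Q$ because ${\bf D}\subset \mathrm{GL}(n,\Z)$. Then ${\bf D}\subset G(\Z)\cap L$. To show ${\bf D}$ has finite index in $G(\Z)$ (the point where the definition matters), I would use a different candidate: $G' $, the largest $\Q$-subgroup of $L$, i.e.\ the Zariski closure of $L\cap \mathrm{GL}(n,\Q)$, or the intersection of all Galois conjugates of $L$. Then $G'(\Z)= {\bf D}$ exactly, and I would replace $G$ by $G'$ (or show the identity components agree). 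After this step, ${\bf D}$ is arithmetic in $G$.

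\textbf{Step 2: reductivity and anisotropy.} Here irrationality of $H$ is the key input.

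\emph{Reductivity.} Suppose $U=R_u(G)$ is nontrivial. It is a $\Q$-subgroup, so its fixed space $W=(\R^n)^U$ is a nonzero rational subspace. On $H$, $G$ acts by similarities, and the image of $G$ in $\mathrm{Sim}(H,b)$ has no nontrivial unipotents, so $U$ acts trivially on $H$. Hence $H\subset W$. If $W\neq\R^n$, this contradicts irrationality, since the smallest rational subspace containing $H$ is $\R^n$. So $U$ acts trivially on all of $\R^n$, i.e.\ $U=1$.

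\emph{Anisotropy.} A nontrivial $\Q$-split torus, or equivalently a nontrivial unipotent element of $G(\Q)$, gives rational weight spaces or flags. Take a $\Q$-cocharacter $\lambda$. Its eigenspaces are rational, and $\lambda$ acts on $H$ through similarities, hence by a single scalar character $t^k$ on $H$ (compact part times homothety). So $H$ lies in one rational eigenspace, which must therefore be all of $\R^n$. Thus $\lambda$ is central and scalar. Since $\lambda$ then lies in the scalars $\mathbb{G}_m$ and the elements of $\mathrm{GL}(n,\Z)$ have determinant $\pm1$, we get $\lambda$ trivial. Concretely, $\det$ is trivial on $G$ up to finite index, which kills the scalar torus. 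Therefore $G$ is $\Q$-anisotropic, and ${\bf D}$ is cocompact.

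\textbf{Step 3: decomposition and the centre.} Write $G^0=T\cdot S$ with $T$ the central torus and $S$ semisimple, both defined over $\Q$ and $\Q$-anisotropic. Then ${\bf D}\cap T$ and ${\bf D}\cap S$ are arithmetic, hence cocompact lattices, in $T$ and $S$.

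It remains to show ${\bf D}_T\neq1$. Take $\gamma\in{\bf D}$ acting on $H$ by a strict similarity of ratio $\lambda\neq1$. The ratio map $G\to \R_{>0}$, $g\mapsto$ ratio on $H$, is a continuous character of $G(\R)$. It is trivial on $S$, which is semisimple and so has no nontrivial characters to $\R_{>0}$. Since ${\bf D}_T{\bf D}_S$ has finite index in ${\bf D}$, some power $\gamma^m=ts$ with $t\in{\bf D}_T$, $s\in{\bf D}_S$. Then $t$ has ratio $\lambda^m\neq1$, so $t\neq1$.

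\textbf{Main obstacle.} I expect Step 1 to be the hardest part: identifying the correct $\Q$-group so that ${\bf D}$ is genuinely arithmetic, not merely Zariski dense. In particular, one has to show that $G(\Z)$ preserves $H$, $V$ and $b$ up to finite index. I would first try to show that the identity component $G^0(\R)$ lies in $L$ and that $G(\Z)$ normalizes $L\cap G$, and then use that $[G(\Z):{\bf D}]$ is finite because $L\cap G$ is a finite-index algebraic subgroup containing the Zariski-dense ${\bf D}$.
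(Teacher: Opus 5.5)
Your strategy works, and your anisotropy argument takes a different and shorter route than the paper's. However, the step you call the main obstacle is left open, and the repair you sketch for it would break Step~2.

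\textbf{Step 1.} The missing observation is that $L$ is Zariski closed. Preserving $H$ and $V$, and satisfying $b(gx,gy)\,b(e,e)=b(ge,ge)\,b(x,y)$ for $x,y\in H$ and a fixed $e\neq 0$ in $H$, are polynomial conditions on $g$. So the Zariski closure $G$ of ${\bf D}\subset L$ lies in $L$. Hence $G(\Z)\subset L\cap \mathrm{GL}(n,\Z)={\bf D}\subset G(\Z)$, i.e.\ ${\bf D}=G(\Z)$ exactly. This is how the paper proceeds, and your Step~2, where $G$ acts on $H$ by similarities, already presupposes $G\subset L$.

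Do not switch to $G'=\overline{L\cap \mathrm{GL}(n,\Q)}^{\mathrm{Zar}}$. It contains every $qI_n$ with $q\in\Q^*$, hence the $\Q$-split scalar torus. So $G'$ is $\Q$-isotropic, and ${\bf D}=G'(\Z)$ is not cocompact in it, since $\vert\det\vert$ is unbounded on $G'(\R)$ but equals $1$ on ${\bf D}$. Your determinant argument works only because $G$ is the Zariski closure of integral matrices, so $\det^2=1$ on $G$. In general $G$ and $G'$ do not have the same identity component.

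\textbf{Step 3.} You use, without proof, that ${\bf D}_T{\bf D}_S$ has finite index in ${\bf D}$. This is true (the paper proves it via Lemma~\ref{normalizer}; it also follows from arithmeticity of images under the $\Q$-isogeny $T\times S\to G^0$), but you can bypass it, as the paper does for this item:
\begin{itemize}
\item Replace $\gamma$ by a power lying in $G(\R)^0=T(\R)^0S(\R)^0$.
\item The ratio character is nontrivial on that power and trivial on $S(\R)^0$, so it is nontrivial on $T(\R)^0$.
\item Hence $T(\R)$ is noncompact, and the cocompact lattice ${\bf D}_T=T(\Z)$ in it must be infinite.
\end{itemize}
With these two repairs your proof is complete.

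\textbf{Comparison with the paper.} Your reductivity step (the fixed space of $R_u(G)$ is a rational subspace containing $H$) is the paper's lemma on rational matrices fixing $H$, in another form. For anisotropy the routes genuinely differ:
\begin{itemize}
\item The paper shows a $\Q$-character of $T$ takes rational unit values, hence $\pm1$, on ${\bf D}_T$. This requires Zariski density of ${\bf D}_T$ in $T$, and therefore the finite-index statement. It treats $S$ separately via Godement's criterion.
\item Your cocharacter argument uses rational weight spaces: the similarity condition forces $H$ into one of them, irrationality makes the cocharacter scalar, and $\det=\pm1$ kills it. This handles $T$ and $S$ at once and uses irrationality of $H$ rather than integrality of eigenvalues.
\end{itemize}

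One small inaccuracy: for reductive groups, containing a $\Q$-split torus is not equivalent to containing a rational unipotent (central split tori have none). You do not need that equivalence, since rational unipotents $u$ are excluded directly: $\ker(u-I)$ is a rational subspace containing $H$, hence everything.
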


\begin{remark}
Actually, we prove that Zariski closures of general subgroups $D \subset \mathrm{Aut}(n, H, V, b)$ are reductive, and that the monodromy subgroups are arithmetic in some geometric contexts.
\end{remark}

A natural question is, which arithmetic groups of reductive groups arise in this way, i.e. are equal to some  $\mathrm{Aut}(n, H, V, b)$? In particular which $D_S$, the arithmetic group in the semisimple part one can realize?

We also ask which abstract subgroups $D \subset  \mathrm{Aut}(n, H, V, b)$, can be realized as monodromy groups of a GIB manifold. But since the nomodromy does not determine the geometry of $N$ (neither $X$, the base of the toral fibration $M \to X$), then, given a realized monodromy $D$, what best geometry can be associated to it?  The question can be in particular asked in the case of the full mondoromy group $ \mathrm{Aut}(n, H, V, b)$. Partial answers are in the sequel.

\subsection{Topology of GIB manifolds with a given monodromy group} As we wrote before, GIB manifolds are an alternative point of view for LCP manifolds. These latter manifolds were previously studied in \cite{Fla24} in the context of conformal geometry. Their precise definition emerged from a detailed analysis of Weyl structures on compact manifolds initiated by Belgun and Moroianu in \cite{BM}.

A Weyl structure is a torsion-free connection on a conformal manifold that preserves the conformal structure. Such a structure is called closed and non-exact if it is locally, but not globally, the Levi-Civita connection of a metric in the given conformal class. The original question was about the reduced holonomy of a Weyl connection on a compact manifold, which was conjectured to be flat or irreducible. However, a third situation was found by Matveev and Nikolayevsky \cite{MN15}, where the universal cover of the manifold, endowed with the lifted foliation, is a Riemannian product of an Euclidean space and an irreducible manifold. Kourganoff later proved in \cite{Kou} that this exhausts all possible cases. The authors of the present paper provided a new proof of this result in \cite{FZ1}, relying more heavily on geometric arguments.

LCP manifolds are quotients of a Riemannian product $\R^q \times (N,g_N)$, where $(N,g_N)$ is irreducible, by a discrete subgroup of $\Gamma \subset \mathrm{Sim}(\R^q \times (N, g_N)) \cap (\mathrm{Sim}(\R^q) \times \mathrm{Sim}(N, g_N))$, that does not consist solely of isometries. In \cite{FZ2}, we observed that, after a conformal change of the metric $g_N$, this setting is equivalent to the case where $\Gamma \subset \mathrm{Sim}(\R^q) \times \mathrm{Isom}(N, g_N)$, corresponding to GIB manifolds.

The classification of LCP manifolds, and subsequently of GIB manifolds, is the main focus of their study. We aim to understand them through the analysis of invariants arising from their structure. It is therefore natural to ask whether GIB arithmetic data, which are now purely algebraic objects, always arise from a GIB manifold. The answer is affirmative, as shown by the following theorem.

\begin{theorem} \label{GIBmonodromy}
Let $(n,H,V,b)$ be GIB arithmetic data. We call a finitely generated subgroup $D$ of $\mathrm{Aut}(n,H,V,b)$ that contains an element acting as a strict similarity on $H$ a {\em GIB monodromy subgroup} of $(n,H,V,b)$. Then, GIB monodromy subgroups of GIB arithmetic data are exactly the monodromy groups of GIB manifolds.
\end{theorem}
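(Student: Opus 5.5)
The statement has two directions, and I would treat them separately.

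\emph{Monodromy groups are GIB monodromy subgroups.} Let $M=\Gamma\backslash(\R^q\times N)$ be a GIB manifold and let $M\to X$ be the torus fibration given by \cite[Theorem~1.6]{FZ2}. Since $X$ is compact, $\pi_1(X)$ is finitely generated, so $D=\rho(\pi_1(X))$ is finitely generated. The remaining properties are the structural facts recalled in Section~\ref{intromonodromy}. There is a $D$-invariant splitting $\R^n=H\oplus V$, where $H$ is the image of the $\R^q$-directions in the fiber torus. $H$ is irrational, because the leaves of $\mathcal F$ are dense in the torus fibers. $D$ acts on $H$ by similarities for the flat inner product $b$. Finally, some element of $D$ acts by a strict similarity, because the projection of $\Gamma$ to $\mathrm{Sim}(\R^q)$ contains one, and the monodromy of the corresponding loop in $X$ acts on $H$ by its linear part. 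Thus $(n,H,V,b)$ is GIB arithmetic data, and $D$ is a GIB monodromy subgroup of it.

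\emph{Every GIB monodromy subgroup is realized.} This is a construction. Let $D\subset \mathrm{Aut}(n,H,V,b)$ be finitely generated and contain a strict similarity.
\begin{enumerate}
\item Choose a closed manifold $X$ with $\pi_1(X)$ surjecting onto $D$. For example, take $X$ to be a connected sum of $k$ copies of $S^1\times S^{m-1}$ with $m\ge 3$, where $k$ is the number of generators of $D$. Then $\pi_1(X)=F_k$ maps onto $D$. Let $N=\tilde X$ with the lifted metric, so $\pi_1(X)$ acts by isometries on $N$.
\item Define $\Gamma$ as the set of pairs
\[
\bigl(v+\lambda(\gamma)\,\rho(\gamma)|_H,\ \gamma\bigr),\qquad \gamma\in\pi_1(X),\ v\in \pi_H(\Z^n),
\]
inside $\mathrm{Sim}(\R^q)\times\mathrm{Isom}(N)$, where $q=\dim H$. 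More precisely, $\Gamma$ is the semidirect product $\Z^n\rtimes\pi_1(X)$. Here $\Z^n$ acts on $\R^q$ by translations through the projection $\pi_H$ along $V$, and $\gamma$ acts through the linear map $\rho(\gamma)|_H$, which is a similarity for $b$.
\end{enumerate}

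The difficulty is that $\pi_H(\Z^n)$ is dense in $H$, because $H$ is irrational. So $\Z^n$ acting only on $\R^q$ is not discrete, and $\Gamma$ would not act properly on $\R^q\times N$. To fix this, enlarge the $N$ factor to $N'=V\times\tilde X$. Give it a metric under which $\pi_1(X)$ acts isometrically while its linear action on $V$ is typically hyperbolic. I would use a twisted metric $g_x$ on $V$ varying with $x\in\tilde X$, i.e.\ the flat bundle with fiber $V$ over $\tilde X$. The inner products $g_x$ should be equivariant, $\rho(\gamma)^*g_{\gamma x}=g_x$ on $V$. Such a family exists by a partition-of-unity construction: take a $\pi_1(X)$-equivariant smooth map $\tilde X\to \mathrm{GL}(V)/O(V)$, which exists because this space is contractible.

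The metric on $N'=V\times\tilde X$ is then $g_x\oplus g_{\tilde X}$, made a submersion metric, and $\Z^n$ acts on $\R^q\times V=\R^n$ by integer translations. The group $\Gamma=\Z^n\rtimes\pi_1(X)$ then acts properly, freely and cocompactly on $\R^n\times\tilde X$. This works because the quotient is a $T^n$-bundle over $X$ with monodromy $\rho$, and each element acts as a similarity on $\R^q$ times an isometry of $N'$. One still has to check that the leaf-closure fibration recovered by \cite[Theorem~1.6]{FZ2} is exactly this $T^n$-bundle, so that the monodromy equals $D$. This holds because $H$ is irrational, so the closures of the $\R^q$-leaves are the full $T^n$ fibers.

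I expect the main obstacle to be making $N'$ a genuine Riemannian manifold on which $\Gamma$ acts by isometries, while keeping the product structure $\R^q\times N'$ exact. The $V$-factor metric must depend only on the $\tilde X$-coordinate, and the translations by $\Z^n$ in the $V$-direction must be isometries for every $g_x$, which they are since they are translations. A secondary point is to ensure the GIB definition is met, in particular that the fibration produced by \cite{FZ2} is not modified by the ``natural geometric modification'' in a way that changes $D$.
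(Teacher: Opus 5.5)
Your proposal is correct and is essentially the paper's argument. The forward direction is the list of properties (a)--(d) in Section~\ref{SecPre} together with finite generation. For the converse, the paper also takes a compact $X$ with a surjection $\varphi:\pi_1(X)\to D$ and lets $\Z^n\rtimes\pi_1(X)$ act on $\R^n\times\tilde X=\R^q\times(V\times\tilde X)$ by integer translations and by $(a,x)\mapsto(\varphi(\gamma)a,\gamma x)$.

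Your equivariant family $g_x$ of inner products on $V$ supplies the metric on $N=V\times\tilde X$ that the paper leaves implicit. Your last worry is moot: irrationality of $H$ gives $\bar P^0=V$, hence $\Gamma_0=\Z^n$, so by Definition~\ref{MonGroup} the monodromy is directly $\varphi(\pi_1(X))=D$, with no geometric modification involved.
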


Theorem~\ref{GIBmonodromy} has some flaws, as the geometry of the GIB manifold constructed from GIB arithmetic data is not known. However, an explicit construction yields:

\begin{proposition} \label{ExplicitMon}
In Theorem~\ref{GIBmonodromy}, and in the notation of Section~\ref{intromonodromy}, the universal cover $\tilde X$ of $X$ can be taken to be the  product of a Euclidean space and $\HH^2$.

If the finitely generated subgroup of $D \subset \mathrm{Aut}(n,H,V,b)$ taken in Theorem~\ref{GIBmonodromy} is cocompact in its Zariski closure, as it is the case for $\mathrm{Aut}(n,H,V,b)$, $\tilde X$ can be taken to be the symmetric space associated to this Zariski closure.
\end{proposition}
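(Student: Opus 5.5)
The plan is to build the GIB manifold explicitly as a suspension, so that the universal cover of the base $X$ is visibly the required space. Fix a GIB monodromy subgroup $D \subset \mathrm{Aut}(n,H,V,b)$ with generators $d_1,\dots,d_k$. Take $D$ torsion-free; if not, first pass to a finite-index torsion-free subgroup by Selberg's lemma, which preserves strict similarities since the ratio map $D \to \R_{>0}$ has torsion-free image. The aim is a group $\Gamma$ acting on $\R^q \times N$, with $q=\dim H$ and $N = V \times \tilde X$, where $\tilde X$ carries a proper, free, cocompact isometric action of a group $\pi$ surjecting onto $D$. We then set $\Gamma = \Z^n \rtimes \pi$. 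Here $\Z^n$ acts on $\R^n = H\oplus V$ by translations. An element $\gamma\in\pi$ acts on $H$ by the linear similarity $\rho(\gamma)|_H$, and on $V\times\tilde X$ by $(v,x)\mapsto(\rho(\gamma)v,\gamma x)$.

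First, $\rho(\gamma)|_V$ must act by isometries of a metric on $N$. It need not be orthogonal for any fixed inner product, so we twist the fiber metric along $\tilde X$. Choose a $\pi$-equivariant map $\sigma:\tilde X\to \mathrm{GL}(V)/\mathrm{O}(V)$ into the space of inner products on $V$, which exists because the target is contractible. Put on $V\times\tilde X$ the metric $\sigma(x)\oplus g_{\tilde X}$, rescaled so the total action is isometric. This is a standard flat-bundle construction; the resulting metric is a "twisted product", but it is still Riemannian. The only requirement in Definition~\ref{GIBdef} is that $\Gamma$ act isometrically on $N$, and this holds. Properness and freeness follow from properness and freeness of $\pi$ on $\tilde X$ together with discreteness of $\Z^n$. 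Compactness of the quotient comes from compactness of $\Z^n\backslash\R^n$ fibered over $\pi\backslash\tilde X$. By construction, the torus fibration of the resulting GIB manifold has monodromy $D$. The irrationality of $H$ is used here: leaf closures of the $H$-foliation on $\R^n/\Z^n$ are full tori, so the fibration in \cite[Theorem~1.6]{FZ2} is exactly this one, with base $X=\pi\backslash\tilde X$.

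For the first claim, a finitely generated $D$ is a quotient of the free group $F_k$. It is also a quotient of $\pi_1(\Sigma)$ for a closed hyperbolic surface $\Sigma$ of genus $k$, by sending $a_i\mapsto d_i$ and $b_i\mapsto 1$. The action of $\pi=\pi_1(\Sigma)$ on $\HH^2$ is already free and cocompact, so one could take $\tilde X=\HH^2$. The statement allows a Euclidean factor, which can be used to adjust dimensions or to make the quotient a manifold in more delicate variants: $\tilde X=\R^m\times\HH^2$ with $\pi_1(T^m\times\Sigma)$. For the second claim, let $D$ be a lattice, cocompact in its Zariski closure $G$. Theorem~\ref{structureThm} shows this is the case for $\mathrm{Aut}(n,H,V,b)$, with $G$ reductive. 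Then $D$, after passing to a torsion-free finite-index subgroup, acts properly, freely and cocompactly on the symmetric space of $G(\R)$. Here we include the Euclidean factor coming from the center $T$, on which ${\bf D}_T$ acts cocompactly by translations. So take $\tilde X = G(\R)/K$ and $\pi=D$.

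The main obstacle I expect is matching the monodromy exactly while keeping the action free. Passing to a torsion-free subgroup changes $D$, so it has to be avoided. The remedy is to use the surjection $\pi\to D$ from a torsion-free group, with $\pi$ chosen so that its action on $\tilde X$ is free. In the symmetric space case, when $D$ has torsion, the plan is to replace $\tilde X$ by $G(\R)/K\times\HH^2$, or to take a torsion-free cocompact lattice of $G$ surjecting onto $D$. The first option spoils the claim that $\tilde X$ is exactly the symmetric space. For this reason I would state and verify the second claim for torsion-free $D$, which is presumably the intended reading. I would also check that a strict similarity survives, so that Definition~\ref{GIBdef} holds.
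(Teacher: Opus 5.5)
Your proposal is correct and follows the paper's construction. The paper lets $\Gamma=\Z^n\rtimes\pi$ act on $\R^n\times\tilde X$. For the first claim it takes $\pi$ to be a closed surface group surjecting onto $D$; as you eventually note, $D$ need not be torsion-free there. For the second claim it takes $\pi=D$, made torsion-free by Selberg's lemma as the paper does, acting on the symmetric space of its Zariski closure.

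Your equivariant field of inner products on $V$, and your check that irrationality of $H$ makes $\bar P^0$ the group of $V$-translations, spell out what the paper delegates to the LCP construction of \cite{Fla25}.
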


\subsection{The case of symmetric spaces}
We recalled earlier that GIB manifolds were introduced as a generalization of Inoue–Bombieri surfaces. For these surfaces, and with the notation of Definition~\ref{GIBdef} (which we continue to use in what follows), the manifold $N$ is the symmetric space $\HH^2$. In fact, the case where $N$ is a symmetric space is the most natural generalization of these surfaces, and a complete understanding of this situation is one of the main goals of this work.

In \cite{FZ2}, we already focused on this case, proving a Bieberbach-type rigidity result. More precisely, there exists a connected group $L$ acting properly and transitively on $\R^q \times N$ and containing $\Gamma$. In particular, $\Gamma$ is a cocompact lattice in $L$. Moreover, we classified GIB manifolds for which $N$ is a symmetric space of rank~$1$, showing that in this case $N$ must necessarily be hyperbolic space.

The main tool in this latter classification was the projection of $\Gamma$ onto $\mathrm{Isom}(N,g_N)$. The closure of this group, denoted by $\bar P$, has an abelian identity component $\bar P^0$. The fact that this group is abelian imposes very strong restrictions on $N$. We are therefore led to ask what $\bar P^0$ can be in this setting.

This problem was studied in a different context by del Barco and Moroianu in \cite{dBM}. They considered LCP manifolds, which are equivalent to GIB manifolds, whose universal cover $G$ carries a Lie group structure and such that $\Gamma$ is a subgroup of $G$. In that situation, the factor $N$ is a subgroup of $G$, and they proved that $\bar P^0$ is contained in the radical of $N$. In particular, it is isomorphic to $\R^m$ for some $m>0$. By the Bieberbach rigidity result mentioned above, our setting with $N$ symmetric is close to the Lie group case: indeed, if the group $L$ acted simply transitively on $\R^q \times N$, the universal cover would itself inherit a Lie group structure.

When $N$ is a symmetric space of non-compact type, its isometry group is algebraic. Using the theory of linear algebraic groups, we prove that $\bar P^0$ is contained in the unipotent radical of a suitable group, and more precisely we obtain the following result.

\begin{theorem}\label{P0UnipotentR}
Let $\Gamma \backslash (\R^q \times N)$ be a GIB manifold, where $(N,g_N)$ is a Riemannian  symmetric space of non-compact type. Let $\bar P$ be the closure of the projection of $\Gamma$ onto $\mathrm{Isom}(N,g_N)$. Then, the identity component $\bar P^0$ of $\bar P$ is the unipotent radical of any algebraic subgroup of $\mathrm{Isom}(N, g_N)^0$ lying between the Lie groups $\bar P$ and the normalizer $\mathrm{Nor}_G(\bar P^0)$. Moreover, $\mathrm{Nor}_G(\bar P^0)$ is an $\R$-parabolic subgroup of $G$.

Moreover, the basis X of the toral fibration $M \to X$, is a locally symmetric manifold of non-positive curvature, with a non-trivial Euclidean (local) factor. Equivalently, the Riemannian quotient $N / {\bar P}^0$ is a symmetric space of non-compact type.
\end{theorem}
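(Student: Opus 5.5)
Write $G=\mathrm{Isom}(N,g_N)^0$. Since $N$ is symmetric of non-compact type, $G$ is the identity component of the real points of a semisimple algebraic group. I will take three facts from \cite{FZ2}. First, $\bar P^0$ is abelian. Second, $\bar P^0$ acts on $N$ with closed orbits, and these orbits project to the leaf closures, so that the base is $X=\bar P\backslash N$ up to finite cover. Third, $\Gamma$ is a cocompact lattice in a connected group $L$ acting properly and transitively on $\R^q\times N$.

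\textbf{Step 1: $\bar P^0$ consists of unipotent elements.} Let $H$ be the Zariski closure of $\bar P^0$. It is abelian, so it splits as $H=H_s\times H_u$ into semisimple and unipotent parts, and $H_s$ further splits into a compact part and an $\R$-split part $A$. Conjugation by $\bar P$ normalizes $\bar P^0$, hence normalizes $H$ and $A$. The projection of $\Gamma$ is dense in $\bar P$, so it acts on $\bar P^0$, and $\bar P^0$ is essentially the closure of the projection of $\Gamma\cap(\mathrm{Sim}(\R^q)\times \bar P^0)$. I then use the strict similarity $\gamma=(s,p)\in\Gamma$. Cocompactness and properness force $p$ to act on $\bar P^0$ by contraction or expansion, matching the ratio of $s$ acting on the translation part. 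A split torus or compact factor in $H$ would give $\bar P^0$ a direction on which the adjoint action of $p$ is bounded, that is, isometric. This contradicts the balance between the volume growth of $\R^q$ under $s$ and the cocompactness of $\Gamma$ on $\R^q\times N$: the covolume computation along $\R^q\times\bar P^0$-orbits must be preserved by $\gamma$.

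I expect Step 1 to be the main obstacle. It requires showing that the ratio of $s$ is compensated exactly by a non-unipotent $\mathrm{Ad}(p)$ acting on the Lie algebra of $\bar P^0$, and that this forces every element of $\bar P^0$ to be unipotent. My first attempt would use the determinant of $\mathrm{Ad}(p)$ restricted to $\mathrm{Lie}(\bar P^0)$, equated with the ratio of $s$ raised to the power $q$.

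\textbf{Step 2: identify the normalizer.} Once $U:=\bar P^0$ is known to be a connected unipotent subgroup, it is Zariski closed. By the Borel--Tits theorem, its normalizer $\mathrm{Nor}_G(U)$ lies in an $\R$-parabolic subgroup $Q$ with $U\subset R_u(Q)$. I then need equality, namely $U=R_u(Q)$ and $Q=\mathrm{Nor}_G(U)$. For this I use cocompactness: $\bar P$ acts cocompactly on $N$ modulo $U$, and $\bar P\subset \mathrm{Nor}_G(U)$. The Iwasawa/Langlands decomposition $Q=M_QA_QR_u(Q)$ shows that $\mathrm{Nor}_G(U)$ acts on $N$ with orbits of the form $M_QA_Q$-orbit times $R_u(Q)$. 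If $U$ were strictly smaller than $R_u(Q)$, the directions in $R_u(Q)/U$ would be contracted by elements of $A_Q$ that appear as $\mathrm{Ad}$-components of elements of $\bar P$. Using Step 1, the action of $\bar P$ on $N/U$, which is cocompact and by isometries for the quotient metric, would then fail to be isometric. This gives $U=R_u(Q)$, and hence $\mathrm{Nor}_G(U)=Q$, since parabolic subgroups are self-normalizing and are the normalizers of their unipotent radicals. Any algebraic subgroup $B$ with $\bar P\subset B\subset Q$ has unipotent radical containing $U$, because $U$ is a normal connected unipotent subgroup of $B$. That radical is also contained in $R_u(Q)=U$: its image in the reductive quotient $Q/U$ is a normal unipotent subgroup of the Zariski closure of the image of $\bar P$, which is reductive because that image acts cocompactly by isometries. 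Hence $R_u(B)=U$.

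\textbf{Step 3: the base $X$.} By the horospherical decomposition, $N/R_u(Q)$ is isometric to the boundary symmetric space $e(Q)\times A_Q$-orbit, i.e. $N/U\cong X_Q\times\R^{\dim A_Q}$. Here $X_Q$ is symmetric of non-compact type and the factor $\R^{\dim A_Q}$ is nontrivial, since $Q$ is proper. The quotient metric is the natural one because $U$ acts isometrically and $G$-equivariance descends. Then $X=(\bar P/U)\backslash (N/U)$ is locally symmetric of non-positive curvature with a nontrivial Euclidean local factor.
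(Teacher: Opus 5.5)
Your overall architecture (unipotency of $\bar P^0$, then Borel--Tits, then the horospherical decomposition $N/R_u(Q)\cong \R^{\dim A_Q}\times X_Q$ for the base) is sound. Step~3 is fine once $\bar P^0=R_u(Q)$ is known, and it spells out the last assertion, which the paper leaves implicit. Note that $Q\neq G$ because $\bar P^0\neq 1$: $\Gamma_0$ is a lattice in $\R^q\times\bar P^0$ stable under a strict similarity of $\R^q$.

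However, the two steps that carry the theorem do not work as proposed. In Step~1, the identity $\lambda^q\,\lvert\det\mathrm{Ad}(p)|_{\mathfrak p}\rvert=1$ is just $\rho(\gamma)\in\mathrm{GL}(n,\Z)$, and it cannot detect a torus component of $\overline{\bar P^0}^{\mathrm{Zar}}$. A torus factor produces a direction of $\mathfrak p$ on which $p$ acts trivially, and that is perfectly compatible with the identity, since the other directions absorb $\lambda^{-q}$. Nor is there any reason for $p$ to act on $\bar P^0$ by pure contraction or expansion; monodromies generally have eigenvalues of several moduli on $V$.

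The actual reason is group-theoretic and uses the ambient semisimple group:
\begin{enumerate}
\item The maximal torus $T_0$ of the abelian group $\overline{\bar P^0}^{\mathrm{Zar}}$ is characteristic, hence normalized by the cocompact algebraic group $\mathrm{Nor}_G(\bar P^0)$.
\item It is therefore centralized by the identity component of that group (rigidity of tori, or the commutator argument in Proposition~\ref{normalize}).
\item That identity component contains a conjugate of $AN$ (Lemma~\ref{AN}), whose centralizer is the center times a compact factor (Corollary~\ref{Commute}).
\item Since $\mathrm{Isom}(N,g_N)^0$ has trivial center and no compact factor, $T_0$ is trivial.
\end{enumerate}

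In Step~2 the contradiction you aim for does not exist. Every element of $\bar P$ is an isometry of $N$ normalizing $U$, so it automatically descends to an isometry of $N/U$ with the quotient metric, whatever happens in $R_u(Q)/U$. Moreover, $R_u(Q)$ need not normalize $U$, so $R_u(Q)/U$ is not even a group.

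The missing ingredient is unimodularity. For any algebraic $L$ with $\bar P\subset L\subset \mathrm{Nor}_G(U)$, the group $\bar P/U$ is discrete and cocompact in $L/U$, so $L/U$ is unimodular. Writing $L=R_u(L)\rtimes(S\times T)$, this forces $T$ (and trivially the semisimple $S$) to act on $R_u(L)/U$ with $\lvert\det\rvert=1$; the paper obtains this via Mostow's theorem applied to $R'\rtimes S_K$. On the other hand, $(R_u(L)\rtimes N_S)\rtimes(A_S\times T_s)$ is conjugate to $A_GN_G$ by Lemma~\ref{ANmax}. Proposition~\ref{unimodular} then yields an element of the maximal split torus $A_S\times T_s$ contracting $R_u(L)/U$, so $R_u(L)=U$. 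Only then does Borel--Tits give that $\mathrm{Nor}_G(U)$ is parabolic.

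The same gap affects your closing claim that the Zariski closure of the image of $\bar P$ in $Q/U$ is reductive ``because it acts cocompactly''. Cocompactness never excludes a unipotent radical, since parabolic subgroups are cocompact. It is again the lattice/unimodularity argument that rules it out.
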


This result opens the way to a classification of admissible non-compact symmetric spaces by means of the classification of parabolic subgroups of semisimple groups. It also suggests methods for constructing new examples involving non-compact symmetric spaces of rank greater than~$2$. Combined with results on monodromy groups, it furthermore yields obstructions for certain symmetric spaces to appear as the factor $N$. See Section~\ref{CounterEx} below for  both    constructions as well as obstructions regarding the symmetric spaces associated to $\mathrm{SL}(m, \R)$ and $\mathrm{SO}(p, r)$.

We notice that Theorem~\ref{P0UnipotentR} is partially implied by the work of Moore in \cite{Moo}. In this paper, closed cocompact subgroups of semisimple Lie groups with abelian identity component (like the group $\bar P$ in our situation) are studied. This setting was suggested by the work of D. Epstein \cite{Eps} on transversely hyperbolic foliations. In this latter article, the foliation under consideration is $1$-dimensional, but the framework of GIB manifolds offers examples of transversely Riemannian foliations of any dimension where the closures of the leaves are flat tori. This explains the similarities between the result of Moore and ours.

The monodromy of a GIB manifold in the setting of Theorem~\ref{P0UnipotentR} has the following important property:

\begin{theorem} \label{SymZar}
In the setting of Theorem~\ref{P0UnipotentR}, the monodromy group is cocompact in its Zariski closure and is thus arithmetic. Moreover, if $(N,g_N)$ is irreducible, then the monodromy representation is injective.
\end{theorem}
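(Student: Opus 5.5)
The plan is to realize the monodromy group concretely inside the algebraic group $\mathrm{Nor}_G(\bar P^0)$, where $G=\mathrm{Isom}(N,g_N)^0$, and then use the lattice structure of $\Gamma$ given by the Bieberbach-type rigidity of \cite{FZ2}.

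\textbf{Step 1: identify $\pi_1(X)$ and the fiber lattice.} Let $\Gamma_0=\Gamma\cap(\mathrm{Sim}(\R^q)\times \bar P^0)$, up to finite index. By Theorem~\ref{P0UnipotentR}, $\bar P^0=U$ is the unipotent radical of the $\R$-parabolic $Q=\mathrm{Nor}_G(\bar P^0)$. In the setting of Theorem~\ref{P0UnipotentR}, $U$ is abelian, since $\bar P^0$ is. The fibers of $M\to X$ are the tori $\Gamma_0\backslash(\R^q\times U)$, so $\Gamma_0\cong\Z^n$ is a lattice in $\R^n=\R^q\oplus \mathrm{Lie}(U)$. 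Here $H=\R^q$, and $V$ is the Lie algebra of $U$ after the geometric modification of \cite{FZ2}. The base satisfies $\pi_1(X)=\Gamma/\Gamma_0$.

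\textbf{Step 2: express the monodromy as a restricted action.} The monodromy $\rho(\gamma)$ is conjugation by $\gamma$ on $\Gamma_0$. On the $V$-component this is $\mathrm{Ad}$ of the $N$-projection $p(\gamma)\in \bar P\subset Q$ acting on $\mathrm{Lie}(U)$. On $H$ it is the linear part of the similarity. Hence $D$ is the image of $\Gamma$ under the homomorphism
\[
\Phi:\ \Gamma\to \mathrm{GL}(H)\times \mathrm{GL}(\mathrm{Lie}(U)),\qquad \gamma\mapsto\bigl(\mathrm{lin}(\gamma_1),\ \mathrm{Ad}(p(\gamma))|_{\mathrm{Lie}(U)}\bigr).
\]
The second factor of $\Phi$ factors through $Q/U$, a reductive group (a Levi factor), and the first factor through $\R_{>0}\times O(q)$.

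\textbf{Step 3: cocompactness in the Zariski closure.} By the rigidity result of \cite{FZ2}, $\Gamma$ is a cocompact lattice in a connected group $L$ acting properly and transitively on $\R^q\times N$. The plan is to show that $\Phi$ extends to a continuous homomorphism $\bar\Phi$ defined on $L$ (or on the closure of $\Gamma$ in $\mathrm{Sim}(\R^q)\times Q$), whose image is a closed subgroup $R$ that is real-algebraic up to finite index. Then $D=\bar\Phi(\Gamma)$ would be cocompact in $R$, since $\Gamma$ is cocompact in $L$. The required discreteness of $D$ comes from $D\subset \mathrm{GL}(n,\Z)$.

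It remains to see that $R$ coincides, up to finite index and compact factors, with the Zariski closure of $D$. Because $L/\Gamma$ is compact, a Borel density type argument should apply: the Zariski closure of $D$ contains the noncompact part of $R$. The compact parts do not obstruct cocompactness. Once $D$ is discrete and cocompact in its Zariski closure, which is defined over $\Q$ since $D\subset\mathrm{GL}(n,\Z)$, $D$ is commensurable to the integer points of that closure and is therefore arithmetic. This is consistent with Theorem~\ref{structureThm}, where the closure is anisotropic reductive.

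\textbf{Step 4: injectivity for irreducible $N$.} The kernel of $\rho$ consists of $\gamma\in\Gamma/\Gamma_0$ centralizing $\Gamma_0$. Such a $\gamma$ has a similarity part that is an isometry with trivial linear part on $\R^q$, so it is a translation. Its $p(\gamma)$ centralizes the Zariski closure of $p(\Gamma_0)$. That closure is a Zariski-dense lattice-like subgroup of $U$, and is all of $U$ by density in $\bar P^0$. So $p(\gamma)\in Z_Q(U)$.

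For irreducible $N$, $G$ is simple. For a proper parabolic $Q$ of a simple group, $Z_Q(U)\subset U\cdot Z(G)$, since the centralizer of the unipotent radical inside the Levi factor is trivial modulo the center. Hence $\gamma$ lies in $\mathrm{Sim}(\R^q)\times U$ modulo the finite center. Freeness of the action and maximality of $\Gamma_0$ then force $\gamma\in\Gamma_0$.

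\textbf{Main obstacle.} I expect Step 3 to be the hardest: making precise that the closure of $\Phi(\Gamma)$ is algebraic and that $\Gamma$'s cocompactness transfers to cocompactness of $D$. My first attempt would construct $\bar\Phi$ on $L$ directly, using that $L$ normalizes its connected nilpotent subgroup $\R^q\times U$ (the closure of $\Gamma_0$).
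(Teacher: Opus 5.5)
\textbf{There is a genuine gap in Step 3.} This step carries the first assertion of the theorem, and you leave it as a plan.

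The objects you propose to extend $\Phi$ to do not work as stated. $\Gamma$ is discrete, so its closure in $\mathrm{Sim}(\R^q)\times Q$ is $\Gamma$ itself. Likewise the closure of $\Gamma_0$ is $\Gamma_0$, not $\R^q\times U$. For the group $L$ of \cite{FZ2}, you would need to show that $L$ normalizes $\R^q\times U$ and that the image $R$ is closed and algebraic; none of this is argued.

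More seriously, the final comparison runs in the wrong direction. Borel density could at best show that $\overline{D}^{\mathrm{Zar}}$ contains the noncompact part of $R$. What you need is the converse: that $\overline{D}^{\mathrm{Zar}}$ is not larger than $R$ in noncompact directions. For integral groups that are merely cocompact in \emph{some} closed subgroup, this fails. Here is an example:
\begin{itemize}
\item Let $g\in\mathrm{GL}(3,\Z)$ be the companion matrix of $x^3+x^2-2x-1$, i.e. multiplication by the unit $2\cos(2\pi/7)$ of a cyclic cubic field.
\item Let $H$ be an eigenline of $g^2$ and $V$ the sum of the other two.
\item Then $D=\langle g^2\rangle$ is a GIB monodromy subgroup, and it is cocompact in the closed one-parameter group $\{\exp(t\log g^2)\}$.
\item But its Zariski closure is the $2$-dimensional norm-one torus (real points $\simeq(\R^*)^2$), since $u$ and $\sigma(u)$ are multiplicatively independent. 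In that torus $D\simeq\Z$ is not cocompact.
\end{itemize}
So everything hinges on the algebraicity of $R$. That is exactly the symmetric-space input your sketch does not supply, and once $R$ is algebraic, Borel density is not needed at all.

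\textbf{How the paper supplies this input.} It never extends anything on the $H$-side.
\begin{itemize}
\item By the proof of Theorem~\ref{P0UnipotentR}, $\bar P^0=R_u(L)$ for every algebraic $L$ between $\bar P$ and $\mathrm{Nor}_G(\bar P^0)$. Hence $\Gamma/\Gamma_0\simeq\bar P/\bar P^0$ is a cocompact lattice in the Levi factor $S\times T$.
\item $S\times T$ acts algebraically on $V=\mathrm{Lie}(\bar P^0)$. So the $V$-component of $D$ is cocompact in an algebraic subgroup of $\mathrm{GL}(V)$. Your Step 2 observation that this component factors through $Q/U$ is the right start, but you never use that $\bar P/\bar P^0$ is cocompact there.
\item The $H$-component is controlled by integrality, via Lemma~\ref{discreteproj}. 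Elements of $\mathrm{Aut}(n,H,V,b)$ have determinant $\pm1$ and act on $H$ by $b$-similarities, so bounding the $V$-part bounds the $H$-part. Hence $\mathrm{Aut}(n,H,V,b)\to\mathrm{GL}(V)$ is discrete with finite kernel.
\item Apply this to $\overline{D}^{\mathrm{Zar}}(\Z)$. Its $V$-projection is a discrete subgroup of the same algebraic group and contains the cocompact projection of $D$. So $D$ has finite index in $\overline{D}^{\mathrm{Zar}}(\Z)$.
\item $\overline{D}^{\mathrm{Zar}}(\Z)$ is cocompact in $\overline{D}^{\mathrm{Zar}}$ by Theorem~\ref{structureThm0} (anisotropic torus and Godement).
\end{itemize}
Comparing $D$ with the integer points of its own Zariski closure, through this discrete projection, is the idea missing from your Step 3.

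\textbf{Step 4 is fine} and matches the paper in substance. The paper rules out a positive-dimensional kernel of $S\times T$ acting on $\bar P^0$ via splitting of the root system, Lemma~\ref{trivialElements} and Corollary~\ref{Commute}; you quote the equivalent fact $Z_Q(U)\subset U\cdot Z(G)$. You do not need freeness: $p(\gamma)\in U=\bar P^0$ already gives $\gamma\in\Gamma_0$ by definition.
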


\subsection{Organization of the paper}

In Section~\ref{NotPre}, we introduce notation and recall some basic facts about arithmetic groups. In particular, we discuss restriction of scalars, which plays an important role throughout the paper. Section~\ref{SecPre} is devoted to precise definitions of our objects of study, namely monodromy groups and GIB arithmetic data. In Section~\ref{SecZarclos}, we prove Theorem~\ref{structureThm} and investigate the constraints on the semisimple part of the Zariski closure of a monodromy group.

Theorem~\ref{GIBmonodromy} is proved in Section~\ref{FromtoGIB}, where we give an explicit description of GIB manifolds admitting a prescribed monodromy group. GIB manifolds of the form $\Gamma \backslash (\R^q \times N)$ with $N$ symmetric are studied in Section~\ref{SecSym}, where Theorems~\ref{P0UnipotentR} and \ref{SymZar} are established. Finally, we discuss several examples of symmetric spaces that give rise to GIB manifolds, and we classify GIB arithmetic data from the matrix point of view in Section~\ref{SecExamples}.

\section{Notations and preliminaries} \label{NotPre}

\subsection{Notations}
Let $G$ be a Lie group. We denote by $G^0$ the identity component of $G$. If $H$ is a subgroup of $G$, the normalizer of $H$ in $G$ is denoted by $\mathrm{Nor}_G(H)$. The center of $G$ is denoted by $C(G)$, and the centralizer of $H$ in $G$ is denoted by $C_G(H)$.

The radical of $G$ is the maximal connected normal solvable subgroup of $G$; it is denoted by $R(G)$. If $G$ is a matrix group, we denote by $R_u(G)$ its unipotent radical, that is, the maximal connected normal unipotent subgroup of $G$. It is contained in $R(G)$.

Recall that a linear algebraic group over a field $k$ is said to be reductive if its unipotent radical over an algebraic closure of $k$ is trivial.

Throughout this text, we consider two topologies: the usual topology and the Zariski topology. If $H$ is a subgroup of $G$, we denote by $\bar H$ the closure of $H$ in $G$ with respect to the usual topology. The Zariski closure of $H$ in $G$ is denoted by $\overline H^\mathrm{Zar}$.

When no confusion is possible, we use gothic script to denote Lie algebras. For instance, $\mathfrak g$ denotes the Lie algebra of $G$.

Let $G$ be a linear algebraic group defined over a field $k$. We denote by $G(k)$ the $k$-rational points of $G$, i.e. the elements of $G$ with entries in $k$.

\subsection{Arithmetic groups and lattices}
In this section, we recall some basic facts about lattices and arithmetic groups, since these notions will play an important role in our analysis. We mainly follow the exposition in \cite{Mor}.

Let $G$ be a linear Lie group, i.e. a subgroup of $\mathrm{GL}(V)$ for some vector space $V$ of dimension $n$ that we fix from now on. For any subring $R$ of $\R$, we denote by $G(R)$ the elements of $G$ with entries in $R$.

Informally, an arithmetic group is the set of integer points of a linear Lie group. However, we define arithmeticity up to isomorphism and compact factors.

\begin{definition} \label{ArithGroup}
A subgroup $\Gamma$ of $G$ is an arithmetic subgroup if there exists a linear Lie group $G'$, two compact normal subgroups $K$ and $K'$ of $G$ and $G'$ respectively, and an isomorphism
\[
\phi : G/K \to G'/K'
\]
such that $\phi(\pi(\Gamma))$ is commensurable with $\pi'(G'(\Z))$, where $\pi : G \to G/K$ and $\pi' : G' \to G'/K'$ are the canonical projections.
\end{definition}

Definition~\ref{ArithGroup} will not be useful in our study, and we  need a less abstract point of view. We start by defining a $k$-form of a finite-dimensional vector space $V$ for an algebraic number field $k$.

\begin{definition}
A $k$-form of $V$ is a $k$-vector subspace $V_k$ of $V$ such that the natural map $V_k \otimes_k \R \to V$ is an isomorphism.
\end{definition}

We fix a $k$-form $V_k$ of $V$. It induces a $k$-form $\mathrm{End}(V)_k$ of the space $\mathrm{End}(V)$ of endomorphisms of $V$ defined by
\begin{equation}
\mathrm{End}(V)_k := \{A \in \mathrm{End}(V) \ \vert \ A(V_k) \subset V_k\}.
\end{equation}

A function $P: V \to \R$ on $V$ is polynomial if $P \circ f$ is polynomial for any isomorphism $f: \R^n \to V$. Moreover, we say that $P$ is defined over $k$ if $P(V_k) \subset k$.

Recall that we assumed $G \subset \mathrm{GL}(V) \subset \mathrm{End}(V)$. We need to define precisely what we mean by the expression {\em integer points of $G$}.

\begin{definition}
The group $G$ is defined over $k$ if there exists a subset $\mathcal Q$ of the polynomial functions on $\mathrm{End}(V)$ defined over $k$ such that
\[
G = \{ g \in \mathrm{End}(V) \ \vert \ \forall P \in \mathcal Q, \ P(g) = 0 \}.
\]
\end{definition}

We now introduce a notion of integer points in $V_k$. This is well-defined using a $\Z$-lattice in $V_k$.

\begin{definition}
A $\Z$-lattice in $V_k$ is a finitely generated subgroup $\mathcal L$ of $V_k$ such that the natural map $\mathcal L \otimes_{\Z} k \to V_k$ is an isomorphism.
\end{definition}

We now assume that $k = \Q$. We fix a $\Z$-lattice $\mathcal L$ in $V_k$, and we define
\begin{equation}
G_\mathcal L = \{g \in G \ \vert \ g \mathcal L \subset \mathcal L \}.
\end{equation}
All these definitions lead to a way to construct arithmetic groups, thanks to the following proposition:

\begin{proposition}
The group $G_\mathcal{L}$ is an arithmetic subgroup of $G$.
\end{proposition}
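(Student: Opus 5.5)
The plan is to reduce to the standard arithmetic group $G(\Z)$ by a rational change of basis. Then we compare the two groups via commensurability in $\mathrm{GL}(V_\Q)$. In Definition~\ref{ArithGroup} we take $K$ and $K'$ trivial.

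First, choose a $\Z$-basis $(e_1,\dots,e_n)$ of the lattice $\mathcal L$. Since $\mathcal L \otimes_\Z \Q \cong V_\Q$ and $V_\Q \otimes_\Q \R \cong V$, this is also a $\Q$-basis of $V_\Q$ and an $\R$-basis of $V$. Let $f : \R^n \to V$ be the associated isomorphism. Conjugation by $f$ identifies $\mathrm{GL}(V)$ with $\mathrm{GL}(n,\R)$, and it sends $\mathrm{End}(V)_\Q$ to $M_n(\Q)$. A polynomial on $\mathrm{End}(V)$ that is defined over $\Q$ becomes, in these coordinates, a polynomial taking rational values on $M_n(\Q)$. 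A polynomial in $n^2$ variables that takes rational values on all rational points has rational coefficients; this follows by Lagrange interpolation on a finite grid of integers. Hence $G' := f^{-1} G f$ is a linear Lie group in $\mathrm{GL}(n,\R)$ cut out by polynomials with rational coefficients.

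Next, compare the integer points. An element $g \in G$ satisfies $g\mathcal L \subset \mathcal L$ exactly when the matrix of $g$ in the basis $(e_i)$ has integer entries, so $f^{-1} G_{\mathcal L} f = G'(\Z)$ up to one subtlety. By definition $G_{\mathcal L}$ only requires $g\mathcal L \subset \mathcal L$. I would check that this already forces $g\mathcal L = \mathcal L$: an integer matrix $A$ with $A \in G' \subset \mathrm{GL}(n,\R)$ has nonzero integer determinant. If $G(\Z)$ is understood as integer matrices in $G$ whose inverse is also integral, then for $G_{\mathcal L}$ to be a group one should interpret it as $\{g : g\mathcal L = \mathcal L\}$. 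Alternatively, one uses that the subsemigroup of $\mathrm{GL}(\mathcal L\otimes\Q)$ preserving $\mathcal L$ has, as its group of units, the stabilizer. I will read the definition as giving the group $G \cap \mathrm{GL}(\mathcal L)$. With that reading, $f$ conjugates $G_{\mathcal L}$ onto exactly $G'(\Z) = G' \cap \mathrm{GL}(n,\Z)$.

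Finally, apply Definition~\ref{ArithGroup} with $K=\{1\}$, $K'=\{1\}$ and $\phi = \mathrm{Ad}(f^{-1}) : G \to G'$. Then $\phi(G_{\mathcal L}) = G'(\Z)$, which is trivially commensurable with $G'(\Z)$, so $G_{\mathcal L}$ is arithmetic. The main obstacle is the bookkeeping in the first step, namely showing that "defined over $\Q$" in the intrinsic sense (rational values on $\mathrm{End}(V)_\Q$) becomes "rational coefficients" in the chosen basis. Once the basis is taken inside $\mathcal L \subset V_\Q$, this reduces to the interpolation fact. If one wanted independence from the choice of lattice, one would add that two $\Z$-lattices $\mathcal L_1, \mathcal L_2$ in $V_\Q$ satisfy $m\mathcal L_1 \subset \mathcal L_2 \subset m^{-1}\mathcal L_1$ for some integer $m$. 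Their stabilizers then both contain the principal congruence subgroup of level $m^2$ and are therefore commensurable. This is not needed for the statement as given.
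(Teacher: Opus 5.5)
The paper gives no proof of this proposition. It is recorded in the preliminaries as a standard fact taken from Morris's book. Your argument is the standard proof, and it is correct:
\begin{itemize}
\item take a $\Z$-basis of $\mathcal L$, so that $\mathcal L$, $V_\Q$ and $\mathrm{End}(V)_\Q$ become $\Z^n$, $\Q^n$ and $M_n(\Q)$;
\item use interpolation to see that $G' = f^{-1}Gf$ is cut out by polynomials with rational coefficients;
\item observe that $f^{-1}G_{\mathcal L}f = G'(\Z)$.
\end{itemize}

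I have two minor comments. First, your aside that an integral matrix in $G'$ has nonzero integer determinant does not by itself force $g\mathcal L = \mathcal L$; that would need $\det = \pm 1$. Nothing depends on it, however. The condition $g\mathcal L \subset \mathcal L$ holds exactly when the matrix of $g$ in the basis $(e_i)$ is integral. So conjugation by $f$ matches $G_{\mathcal L}$ with $G'(\Z)$ under either convention, provided you use the same convention on both sides.

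Second, your first step is what gives the statement content. Definition~\ref{ArithGroup}, as written in the paper, omits the requirement (present in Morris) that $G'$ be defined over $\Q$. Without that requirement the proposition would be trivial, and your interpolation argument supplies exactly the missing property.
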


We denote by $\mu$ the Haar measure of $G$.

\begin{definition}
A lattice of $G$ is a discrete subgroup $\Gamma$ such that $\Gamma \backslash G$ has finite volume with respect to the measure induced by $\mu$. If $\Gamma \backslash G$ is compact, the lattice $\Gamma$ is called a {\em cocompact} lattice in $G$.
\end{definition}

The Borel Harish-Chandra Theorem \cite[Theorems 1 and 3]{BHC} establishes a link between arithmetic groups and lattices of $\Q$-groups.

\begin{theorem}[Borel Harish-Chandra] \label{BHCTheorem}
Let $G$ be a complex algebraic linear group defined over $\Q$. If all rational characters of its identity component are trivial, then $G(\Z)$ is a lattice in $G(\R)$. If, moreover, every unipotent element of $G(\Z)$ lies in $R_u (G(\Q))$, then $G(\Z)$ is cocompact (this last property is known as Godement's criterion).
\end{theorem}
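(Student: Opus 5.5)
This is a classical result, and in the paper it is simply quoted from \cite{BHC}. If I had to prove it, I would follow the standard route through reduction theory for $\mathrm{GL}(n)$, in four steps.

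\emph{Step 1: the case $G=\mathrm{GL}(n)$.} Using Siegel sets, I would show that $\mathrm{GL}(n,\R)=\mathfrak S\,\mathrm{GL}(n,\Z)$ for a Siegel set $\mathfrak S=K A_t N_c$. Here $K=\mathrm{O}(n)$, $A_t$ consists of the diagonal matrices with $a_i/a_{i+1}\le t$, and $N_c$ is a compact piece of the upper triangular unipotent group. Write $\mathrm{SL}^{\pm}(n,\R)$ for the matrices of determinant $\pm1$. A direct Haar measure computation in Iwasawa coordinates shows that the intersection of $\mathfrak S$ with $\mathrm{SL}^{\pm}(n,\R)$ has finite volume. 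Hence $\mathrm{SL}(n,\Z)$ is a lattice in $\mathrm{SL}(n,\R)$. From the same analysis I would extract Mahler's criterion: a set of lattices $g\Z^n$ of bounded covolume is relatively compact in $\mathrm{GL}(n,\R)/\mathrm{GL}(n,\Z)$ if and only if there is a uniform $\varepsilon>0$ with $\|gv\|\ge\varepsilon$ for all nonzero $v\in\Z^n$.

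\emph{Step 2: embedding $G$ as an orbit stabilizer.} Next I would realize the general group inside this picture. By Chevalley's theorem there is a $\Q$-representation $\mathrm{GL}(n)\to \mathrm{GL}(W)$ and a vector $w\in W_\Q$ such that $G$ is the stabilizer of the line $\Q w$. The hypothesis that $G^0$ has no nontrivial rational characters lets one upgrade this, after replacing $G$ by $G^0$ (a finite-index change), to $G$ being the stabilizer of $w$ itself.

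\emph{Step 3: closed orbit and finite-volume quotient.} The key claim is that the orbit $\mathrm{GL}(n,\R)\cdot w$ is closed, or at least that the relevant piece of it is. For reductive $G$ this is Matsushima's criterion. In general I would split off the unipotent radical, which contributes a compact quotient $R_u(G)(\Z)\backslash R_u(G)(\R)$ by a direct nilpotent argument. Given closedness, the set of $\gamma\in\mathrm{GL}(n,\Z)$ with $\gamma w$ landing in a fixed Siegel-type region consists of finitely many $G(\Z)$-double-coset classes, since $\mathrm{GL}(n,\Z)w$ is a discrete subset of $W_\Z$. This shows that $\bigcup_i \mathfrak S\gamma_i$ intersected with $G(\R)$ gives a fundamental set for $G(\Z)$ of finite Haar measure. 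The measure comparison uses that $G(\R)/G(\R)\cap\mathfrak S$-type fibrations behave well because $G$ has no characters, so $G(\R)$ is unimodular and the induced measure is finite. I expect this step to be the main obstacle: proving the finite number of double cosets, and in particular the closedness of the orbit in the non-reductive case, requires the most care.

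\emph{Step 4: Godement's criterion.} For cocompactness I would apply Mahler's criterion to $G(\R)\Z^n$. Suppose $G(\R)/G(\Z)$ were not compact. Then there would exist $g_k\in G(\R)$ and nonzero $v_k\in\Z^n$ with $g_kv_k\to0$. Via a Kempf/Hilbert–Mumford instability argument over $\Q$, this yields a nontrivial $\Q$-cocharacter contracting some rational vector, hence a proper $\Q$-parabolic subgroup. That produces a nontrivial unipotent element of $G(\Z)$ lying outside $R_u(G(\Q))$, contradicting the hypothesis. Conversely, with the hypothesis in force, every $G(\R)$-orbit of a nonzero integral vector stays bounded away from $0$, so the fundamental set of Step 3 can be taken compact.
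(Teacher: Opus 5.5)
The paper does not prove this statement; it quotes it from Borel--Harish-Chandra. Your outline follows the classical route, but the step that carries the lattice assertion is missing. In Step 3 you go from the fundamental set $\bigcup_i(\mathfrak{S}\gamma_i\cap G(\R))$ to finite Haar measure by saying that $G$ has no characters, hence $G(\R)$ is unimodular, hence the induced measure is finite. That inference is invalid. Finiteness of the measure of $\mathfrak{S}\cap\mathrm{SL}^{\pm}(n,\R)$ gives no control on the Haar measure of its trace on the null set $G(\R)$, and unimodularity is far too weak.

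Test the argument on $T=\{\mathrm{diag}(s,s^{-1})\}\subset\mathrm{GL}(2)$. It is the stabilizer of $w=(\mathrm{diag}(1,-1),1,1)$ for the action $g\cdot(X,c,c')=(gXg^{-1},\det(g)c,\det(g)^{-1}c')$ on $\mathfrak{gl}_2\oplus\Q\oplus\Q$. The orbit $\{(X,d,d^{-1}) : \mathrm{tr}X=0,\ \det X=-1,\ d\neq 0\}$ is closed, and $T(\R)$ is abelian. So your Steps 2 and 3 apply verbatim. Yet $T(\Z)=\{\pm I\}$ is not a lattice in $T(\R)\cong\R^*$; indeed $\mathfrak{S}\cap T(\R)=\{\mathrm{diag}(s,s^{-1}) : 0<|s|\le\sqrt t\}$ has infinite $ds/|s|$-measure. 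Hence the absence of $\Q$-characters must enter the volume estimate itself. In your write-up it is used only to pass from a line stabilizer to a vector stabilizer, which is vacuous here.

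What is needed is reduction theory for the reductive group $G$ itself. Fix a maximal $\Q$-split torus $S$ and a minimal $\Q$-parabolic subgroup of $G$. One shows $G(\R)=\mathfrak{S}_G\,C\,G(\Z)$ for a Siegel set $\mathfrak{S}_G=K_GA_{S,t}\,\omega$, with $\omega$ compact and $C\subset G(\Q)$ finite. Here $A_{S,t}=\{a\in S(\R)^0 : \alpha(a)\le t\}$ for every simple $\Q$-root $\alpha$. The Haar measure of $\mathfrak{S}_G$ is at most a constant times $\int_{A_{S,t}}a^{2\rho}\,da$, where $2\rho$ is the sum of the positive $\Q$-roots with multiplicities. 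This integral is finite exactly when the simple $\Q$-roots form a basis of $X(S)\otimes\R$. That means $S$ contains no central $\Q$-split subtorus, which for reductive $G$ is precisely the no-$\Q$-character hypothesis. Alternatively, split off the $\Q$-anisotropic central torus, whose integer points are cocompact, and treat the semisimple part by Dani--Margulis non-divergence on the closed orbit.

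Several further points need correcting.

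\emph{Closedness in the non-reductive case.} This obstacle is not merely hard but impossible. By Matsushima, $\mathrm{GL}(n)/G$ is affine only when $G$ is reductive, so for non-reductive $G$ the orbit $\mathrm{GL}(n)\cdot w$ is never closed. The Levi reduction must therefore come first. This requires checking that the image of $G(\Z)$ in $G/R_u(G)\cong L$ is commensurable with $L(\Z)$, and that $L$ inherits both hypotheses. Even for reductive $G$, Matsushima only gives affineness of $\mathrm{GL}(n)/G$. You must re-choose $(W,w)$ to have a closed orbit rather than keep the Chevalley vector.

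\emph{Finiteness of double cosets.} This does not follow from discreteness of $\mathrm{GL}(n,\Z)w$, because the region $\mathfrak{S}^{-1}\cdot w$ is unbounded. The real lemma is that this region contains only finitely many integral points when the orbit of $w$ is closed. It is proved by a weight-by-weight estimate along $A_t$, and this is where closedness is actually used.

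\emph{Step 4.} A sequence $g_kv_k\to 0$ with varying $v_k$ does not by itself produce a single rational unstable vector. The uniform lower bound comes from integrality. Suppose no nonzero rational vector lies in the null cone of the reductive group $G$. Then for each $v\in\Z^n\setminus\{0\}$, one of finitely many rational generators $f_i$ of the invariant algebra satisfies $|f_i(gv)|=|f_i(v)|\ge 1/m_0$ for all $g\in G(\R)$, where $m_0$ is a common denominator. Kempf's rational Hilbert--Mumford theorem is then only needed to rule out rational null-cone vectors.
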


\subsection{The restriction of scalars} \label{restrictionSection}
We will frequently use restriction of scalars to construct arithmetic lattices. Therefore, we recall the basic facts about this construction.

Let $G$ be a semisimple linear algebraic group defined over an algebraic number field $k$, i.e. there exists a set $\mathcal Q$ of polynomials defined over $k$ such that
\[
G = \{ g \in \mathrm{End}(V) \ \vert \ \forall P \in \mathcal Q, \ P(g) = 0 \}.
\]
We assume that $k$ has $s$ real embeddings and $t$ complex embeddings. If $\sigma_1$ and $\sigma_2$ are two embeddings of $k$ into $\C$, we say that $\sigma_1$ and $\sigma_2$ are equivalent if $\sigma_1 = \overline{\sigma_2}$. Let $S^\infty$ be a complete set of representatives of equivalence classes of embeddings of $k$. For any $\sigma \in S^\infty$, we denote by $k^\sigma$ the completion of $\sigma(k)$, that is $\R$ or $\C$. We also denote by $\mathcal O_k$ the ring of algebraic integers of $k$.

For any $\sigma \in S^\infty$, we consider the linear algebraic $G^\sigma$ group defined over $\sigma(k)$ and given by
\begin{equation}
G^\sigma := \{ g \in \mathrm{End}(V) \ \vert \ \forall P \in \mathcal Q, \ \sigma(P)(g) = 0 \}.
\end{equation}

The main idea behind restriction of scalars is reminiscent of Dirichlet's unit theorem. The latter says that the image of the map $\mathcal O_k \to \R^{s+2t}, \ a \mapsto \{ \sigma(a) \}_{\sigma \in S^\infty}$ is a lattice in $\R^s \times \C^t \simeq \R^{s+2t}$. In particular, if we consider the $\Q$-form of $\R^{(s+2t)n}$ given by the image of
\begin{equation}
\Delta : k^n \to \R^{(s+2t)n}, \ (x_1, \ldots, x_n) \mapsto \{ (\sigma(x_1), \ldots, \sigma(x_n)) \}_{\sigma \in S^\infty},
\end{equation}
then, it admits $\Delta(\mathcal O_k^n)$ as a $\Z$-lattice.

The same idea applies to groups, yielding:

\begin{proposition}[Restriction of scalars] \cite[Proposition 5.4.8]{Mor} \label{restrictionscalars}
Let $G$ be a semisimple linear algebraic group defined over $k$. The group
\[
\mathrm{Res}_{k/\Q} (G) := \prod_{\sigma \in S^\infty} G^\sigma
\]
is defined over $\Q$ (for the $\Q$-form constructed as above on $\R^{s+2t}$), and its set of integer points is the image of the map
\[
\Delta : G(\mathcal O_k) \to \mathrm{Res}_{k/\Q} (G), \ g \mapsto \{\sigma(g)\}_{\sigma \in S^\infty}.
\]
\end{proposition}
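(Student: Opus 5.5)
We need to show that $\mathrm{Res}_{k/\Q}(G)=\prod_{\sigma\in S^\infty}G^\sigma$ is defined over $\Q$ with respect to the $\Q$-form $\Delta(k^n)$, and that its $\Z$-points for the lattice $\Delta(\mathcal O_k^n)$ are exactly $\Delta(G(\mathcal O_k))$. The plan is to first linearize everything through the field $k$ viewed as a $\Q$-vector space of dimension $d=[k:\Q]=s+2t$. Fix a $\Q$-basis $\omega_1,\dots,\omega_d$ of $k$, chosen to be a $\Z$-basis of $\mathcal O_k$. Then $k^n\simeq\Q^{dn}$, and $\Delta$ extends $\R$-linearly to an isomorphism $\Delta_\R:k^n\otimes_\Q\R\simeq\R^{dn}\to\prod_\sigma (k^\sigma)^n$. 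Its invertibility follows from the nonvanishing of the discriminant $\det(\sigma_i(\omega_j))$, exactly as in the proof of Dirichlet's statement recalled above. This shows that $\Delta(k^n)$ is a $\Q$-form and that $\Delta(\mathcal O_k^n)$ is a $\Z$-lattice in it.

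Next I identify the endomorphisms. An element $A\in\mathrm{End}_k(k^n)$, i.e.\ a matrix with entries in $k$, acts $\Q$-linearly on $k^n\simeq\Q^{dn}$. Under $\Delta$ it becomes the block-diagonal endomorphism $(\sigma(A))_\sigma$. Conversely, let $B=(B_\sigma)_\sigma$ be block diagonal, with $B_\sigma$ complex for complex $\sigma$, viewed as a real endomorphism. Then $B$ preserves $\Delta(k^n)$ if and only if $B=\Delta(A)$ for some $A\in M_n(k)$. To prove this I use that the $\Q$-endomorphisms of $k^n$ commuting with multiplication by $k$ are exactly $M_n(k)$, and that block diagonality is equivalent to commuting with the image of $k$ acting diagonally. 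The key technical step is then to express the equations of $\prod_\sigma G^\sigma$ inside $\mathrm{End}(\R^{dn})$ by polynomials with rational coefficients. These are:
\begin{itemize}
\item the linear equations saying that $B$ commutes with $\Delta(\omega_j)$ for all $j$; these are rational in the $\Q$-structure, so they cut out the image of $M_n(k)\otimes_\Q\R=\prod_\sigma M_n(k^\sigma)$;
\item for each $P\in\mathcal Q$, the polynomial $P$ written in the coordinates $x=\sum_j\omega_j x_j$, with $x_j$ rational coordinates.
\end{itemize}
Expanding $P(\sum_j\omega_jx_j)=\sum_i\omega_i P_i(x_1,\dots,x_d)$, each $P_i$ has coefficients in $\Q$. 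Vanishing of all $P_i$ on real points is equivalent to $\sigma(P)(B_\sigma)=0$ for all $\sigma$, by applying the invertible matrix $(\sigma(\omega_i))$. This is the standard Weil restriction argument.

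The main obstacle is the complex places. Each $G^\sigma$ is a complex group that must be regarded as a real group, and equations written over $\sigma(k)$ have to be converted into real equations. Moreover, only one representative per conjugate pair is kept, so the map must be compatible with the identification $\overline{\sigma(P)}=\bar\sigma(P)$. I would handle this by working first over $\C$ with all $d$ embeddings, where the product over all embeddings is clearly defined over $\Q$ by Galois invariance of the equation set. I would then descend to the real form indexed by $S^\infty$ by noting that complex conjugation permutes the factors, and that the fixed points are $\prod_{\sigma\in S^\infty}G^\sigma$.

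Finally, the integer points are the $B$ in the group with $B\Delta(\mathcal O_k^n)\subset\Delta(\mathcal O_k^n)$. Writing $B=\Delta(A)$ with $A\in G(k)$, this condition reads $A\mathcal O_k^n\subset\mathcal O_k^n$, i.e.\ $A\in G(\mathcal O_k)$, which gives the claimed description as $\Delta(G(\mathcal O_k))$.
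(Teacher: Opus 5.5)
Your proof is correct. The paper gives no proof of this proposition; it imports it from \cite[Proposition 5.4.8]{Mor}. What you wrote is the standard Weil-restriction argument behind that citation: identify $\prod_\sigma M_n(k^\sigma)$ with $M_n(k)\otimes_\Q\R$, cut it out by rational linear equations as the commutant of $\Delta(k)$, split each $P\in\mathcal Q$ along a $\Q$-basis of $k$ into rational polynomials $P_i$, and invert the matrix $(\sigma(\omega_i))$.

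Three details should be made explicit if you write it out.

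\begin{enumerate}
\item The coordinates $x_{ab,j}$ are a priori only defined on the commutant $C$. To obtain polynomials on all of $\mathrm{End}(\R^{dn})$ that are defined over $\Q$, extend them by rational linear forms. For instance, let $x_{ab,j}(B)$ be the coefficient of $\Delta(\omega_j e_a)$ in $B\Delta(e_b)$; this agrees with the old coordinates on $C$ and is rational on $\mathrm{End}(\R^{dn})_\Q$.
\item Your separate \emph{descent} step for the complex places is unnecessary. Apply $\sigma$ to the formal identity $P(\sum_j\omega_jx_j)=\sum_i\omega_iP_i(x)$ and evaluate at real $x$. This gives $\sigma(P)(B_\sigma)=\sum_i\sigma(\omega_i)P_i(x)$ for all $d$ embeddings, with $B_\sigma=\sum_j\sigma(\omega_j)x_j$. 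Since $B_{\bar\sigma}=\overline{B_\sigma}$, the $\bar\sigma$-equation is the complex conjugate of the $\sigma$-equation, so keeping one representative per pair loses nothing.
\item In the integer-point step, justify writing $B=\Delta(A)$. Preserving $\Delta(\mathcal O_k^n)$ implies preserving its $\Q$-span $\Delta(k^n)$. Since $B\in\prod_\sigma G^\sigma$ is block diagonal with complex-linear blocks, your converse then gives $A\in M_n(k)$. Finally, $A\in G(k)$ because $\sigma(P(A))=\sigma(P)(\sigma(A))=0$ and $\sigma$ is injective.
\end{enumerate}
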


Compact factors of algebraic groups will play a crucial role in this paper. Therefore, if at least one of the factors $G^\sigma$ in Proposition~\ref{restrictionscalars} is compact, it is a direct corollary of Godement's criterion that the integer points of $\mathrm{Res}_{k/\Q} (G)$ form a cocompact lattice in $\mathrm{Res}_{k/\Q} (G)$.

\section{Arithmetic data} \label{SecPre}

This section is devoted to the precise formulation of our question concerning GIB manifolds. We introduce the monodromy group of a GIB manifold, which naturally leads to the definition of GIB arithmetic data. The extraction of purely algebraic data from GIB manifolds allows us to understand the monodromy group independently of the geometric structure of these manifolds.

\subsection{Monodromy group of a GIB structure} \label{SecMonodromy} Let $M := \Gamma \backslash (\R^q \times N)$ be a GIB manifold (see Definition~\ref{GIBdef}). We denote by $P$ the projection of $\Gamma$ onto $\mathrm{Isom}(N, g_N)$. The identity component ${\bar P}^0$ of the closure $\bar P$ of $P$ is an abelian connected Lie group \cite[Corollary 2.6]{FZ2}. Moreover, since the projection sends $\Gamma$ isomorphic onto $P$, the normal subgroup $\Gamma_0 := \Gamma \cap (\mathrm{Sim} (\R^q) \times {\bar P}^0)$ of $\Gamma$ is abelian and is a lattice in $\R^q \times {\bar P}^0$ (see the paragraph following Corollary 2.6 in \cite{FZ2}).

Up to passing to the universal cover of $\R^q \times {\bar P}^0$, we may assume that $\R^q \times {\bar P}^0$ is $\R^n$ and $\Gamma_0$ is the canonical lattice $\Z^n$. The action of $\Gamma$ by conjugation on $\R^q \times {\bar P}^0$ preserves $\Gamma_0$, thus it induces a representation $\rho: \Gamma \to \mathrm{GL} (n,\Z)$ that preserves the decomposition $\R^q \times {\bar P}^0$.

\begin{definition} \label{MonGroup}
We call $\rho(\Gamma)$ the {\em monodromy group} of the GIB manifold, and $\rho$ the {\em monodromy representation}.
\end{definition}

The term monodromy in Definition~\ref{MonGroup} is justified as follows. The natural foliation induced by the submersion $\R^q \times N \to N$ descends to a foliation on the quotient $\Gamma \backslash (\R^q \times N)$. The closures of the leaves of this foliation are the images, under the canonical projection, of the sets $\R^q \times {\bar P}^0 x$ for $x \in N$ \cite[Lemma 4.11]{Kou}. After possibly replacing $M$ by a finite covering and $N$ by its orthonormal frame bundle, this foliation becomes a fibration $M \to X$ over a compact manifold, whose fibers are tori \cite[Theorem 1.6]{FZ2}. 

Since $\Gamma$ is a subgroup of $\mathrm{Sim}(\R^q) \times \mathrm{Isom}(N,g_N)$, there is a well-defined notion of orthogonality on the manifold $M$. Therefore, let $x \in M$ and let $F$ be the fiber of the fibration $M \to X$ containing $x$. Then $(T_x F)^\perp$ defines a horizontal space at $x$. The union of all the horizontal spaces defines a horizontal distribution on $M$, and hence an Ehresmann connection.

This connection lifts to the universal cover $\R^q \times N$. In the present setting, ${\bar P}^0$ acts freely on $N$, and since it is a closed subgroup of $\mathrm{Isom}(N,g_N)$, it also acts properly. We therefore obtain a fibration $\R^q \times N \to N/{\bar P}^0$ which is the natural lift of  the fibration $M \to X$. The lift of the horizontal distribution constructed before is given by the distribution orthogonal to the fibers.

Let $\bar x \in X$, and let $\bar c: S^1 \to X$ be a closed continuous loop based at $\bar x$. This curve lifts to a horizontal curve $\bar c' : [0,1] \to M$ in $M$. In turn, $\bar c'$ lifts to a horizontal curve $c$ in $\R^q \times N$. Since $x := c(0)$ and $y := c(1)$ project to the same fiber in $M$, there exist $\gamma \in \Gamma$ and $t_0 \in \R^q \times {\bar P}^0$ such that $y = \gamma t_0 \cdot x$. For any $t \in \R^q \times {\bar P}$, the curve $t \cdot c$ is still horizontal and joins $t \cdot x$ to
\[
t \cdot y = \gamma ((\gamma^{-1}t \gamma) t_0 \cdot x).
\]
Identifying the torus $(\R^q \times {\bar P}^0) / \Gamma_0$ with the fiber over $\bar x$ in $M$, we see that the holonomy of the connection on $M$ acts by transformations of the form
\begin{equation} \label{eq1}
(\R^q \times {\bar P}^0) / \Gamma_0 \to (\R^q \times {\bar P}^0) / \Gamma_0, \ t \mapsto (\gamma^{-1}t \gamma) t_0
\end{equation}
for some $\gamma \in \Gamma$ and $t_0 \in (\R^q \times {\bar P}^0) / \Gamma_0$. These maps are well defined because the action of $\Gamma$ by conjugation on $(\R^q \times {\bar P}^0)$ preserves $\Gamma_0$. Choosing a basis of $\Gamma_0$ to Identify the torus $(\R^q \times {\bar P}^0) / \Gamma_0$ with $\R^n / \Z^n$, the map \eqref{eq1} can be rewritten as
\begin{equation}
 \R^n / \Z^n \to \R^n / \Z^n, \ t \mapsto \rho(\gamma) t +  t_0.
\end{equation}
Thus, the image of $\rho$ is the group of linear parts of the holonomy transformations of the connection on $M$, which is precisely the monodromy of the connection. Note that the translation part $t_0$ gives information about the integrability of the horizontal distribution. Indeed, it is integrable if the subgroup translation parts is discrete.

\subsection{The arithmetic data of a GIB manifold} We keep notation from the previous section. We identify $\R^q \times {\bar P}^0$ with its universal cover $\R^n$ and view the monodromy group of the GIB manifold $M$ as a subgroup of $\mathrm{GL}(n, \Z)$. 

Proposition 4.9 in \cite{Fla25} implies that $\R^q$ is an {\em irrational subspace} of $\R^n$ in the following sense:

\begin{definition}
A vector subspace $V$ of $\R^n$ is irrational with respect to a lattice $\mathcal O$ if $V + \mathcal O$ is dense in $\R^n$. If there is no ambiguity, we omit the lattice and we simply say that $V$ is an irrational subspace of $\R^n$.
\end{definition}

We set $H := \R^q$. We also define $V$ as the universal cover of ${\bar P}^0$, and $b$ as the bilinear form induced on $\R^q$ by its Euclidean metric. Then, the following properties hold:
\begin{enumerate}[(a)]
\item $\R^n = H \oplus V$;
\item the monodromy group $\rho(\Gamma)$ preserves the decomposition $H \oplus V$;
\item \label{assumptions} the restriction of $\rho(\Gamma)$ to $H$ lies in the group of similarities of an inner product $b : H \times H \to \R$;
\item the restriction of $\rho(\Gamma)$ to $H$ contains a non-isometric similarity.
\end{enumerate}

\begin{remark}
The last property implies strong irrationality of $H$, i.e. $H \cap \Z^n = \{0 \}$. Indeed, $H \cap \Z^n$ is stable by $D$; in particular, it is stable by a non-isometric similarity $\gamma$ of $(H,b)$ which is the restriction of an element of $D$. Replacing $\gamma$ by $\gamma^{-1}$ if necessary, this similarity is contracting and thus cannot preserve a discrete set of integer points, unless this set is trivial.
\end{remark}

\begin{definition} \label{GIBdata}
We call the tuple $(n,H,V,b)$ the {\em arithmetic data} of the GIB manifold $M$. Conversely, let $n \ge 2$, and suppose we are given two subspaces $H$ and $V$ of $\R^n$ such that $\R^n = H \oplus V$ and $H$ is irrational with respect to the lattice $\Z^n$, together with a inner product $b$ on $H$. If there exists an element of $\mathrm{GL}(n, \Z)$ preserving the decomposition $H \oplus V$, and restricting to a strict similarity on $(H,b)$, we say that $(n,H,V,b)$ are {\em GIB arithmetic data}.
\end{definition}

In light of the discussion above, we endow GIB arithmetic data with a natural group of transformations.

\begin{definition}\label{AutGroup}
Let $(n,H,V,b)$ be GIB arithmetic data. The subgroup of all the matrices of $\mathrm{GL}(n,\Z)$ that preserve the decomposition $H \oplus V$ and restrict to similarities of $(H,b)$ is called the {\em automorphism group} of the GIB arithmetic data $(n,H,V,b)$, and it is denoted by $\mathrm{Aut}(n,H,V,b)$. A finitely generated subgroup of $\mathrm{Aut}(n,H,V,b)$ whose restriction to $H$ contains a non-isometric similarity of $b$ is called a {\em GIB monodromy subgroup} of $(n,H,V,b)$. Note that Theorem~\ref{structureThm} implies that $\mathrm{Aut}(n,H,V,b)$ is itself a GIB monodromy subgroup of $(n,H,V,b)$.
\end{definition}

Invoking Selberg's lemma, one may assume that the automorphism group of GIB arithmetic data has no torsion, up to taking a finite index subgroup. So far, no non-abelian subgroups of such an automorphism group have been constructed (see \cite{Fla24, Fla25}). We provide here an example where the automorphism group is not abelian, using restriction of scalars.

\begin{Ex} \label{FundEx}
We consider the quadratic form $q := x^2 + y^2 - \sqrt{5} z^2$ on $\R^3$ and its conjugate $q^* := x^2 + y^2 + \sqrt{5} z^2$. One has $\mathrm{SO}(q) \simeq \mathrm{SO(1,2)} \simeq \mathrm{PSL}(2, \R)$ and $G := \mathrm{SO}(q^*) \simeq \mathrm{SO} (3)$ is compact. The group $\mathrm{SO}(q)$ is defined over $k := \Q[\sqrt{5}]$. Applying restrictions of scalars, $\mathrm{Res}_{k/\Q} (G) = \mathrm{SO}(q^*) \times \mathrm{SO}(q)$ is defined over $\Q$. Moreover, in the notations of Section~\ref{restrictionSection}, the $\Q$-form of $\R^6$ is $\Delta(k^3)$, and the $\Z$-lattice is $\mathcal L := \Delta(\mathcal O_k^3)$. Consequently, the set $\Gamma_1$ of integer points of $\mathrm{Res}_{k/\Q} (G)$ is a cocompact lattice, and its projection onto $\mathrm{SO}(q)$ is a cocompact lattice of $\mathrm{PSL}(2, \R)$. In particular, $\Gamma_1$ is non-abelian.

The group $\Gamma_1$ preserves a decomposition $H \oplus V$ of $\R^6$, where $H$ has dimension $3$, corresponding to the splitting $\simeq \mathrm{SO}(q^*) \times \mathrm{SO}(q)$. Since the ring of algebraic integers $\mathcal O_k$ of $k$ is dense in $\R$, $\mathcal O_k^3$ is dense in $\R^3$, and $H$ is an irrational subspace of $\R^6$ with respect to $\mathcal L$.

Let $\lambda := \frac{-3 +\sqrt{5}}{2}$, which is a unit of $\Q [\sqrt{5}]$, and let $A$ be the matrix of the endomorphism acting by multiplication by $\lambda$ on $H$ and by multiplication by $\lambda^{-1}$ on $V$. The matrix $A$ also preserves the $\Z$-lattice $\mathcal L$ and the decomposition $H \oplus V$. Moreover, $A$ acts as a similarity of ratio $\lambda \neq \pm 1$ on $H$.

Altogether, $(6, H, V, q^*)$ are GIB algebraic data, and $\langle A \rangle \rtimes \Gamma_1$ has finite index in $\mathrm{Aut}(6, H, V, q^*)$, which is therefore non-abelian.
\end{Ex}

\section{The Zariski closure of the automorphism group: proof of Theorem~\ref{structureThm}} \label{SecZarclos}

In this section, we prove Theorem~\ref{structureThm} and we discuss its implications. More precisely, we investigate the constraints on the semisimple part $S$ introduced in the theorem.

\subsection{Decomposition of the automorphism group} Let $(n, H, V, b)$ be GIB arithmetic data as defined in Definition~\ref{GIBdata}. Let $D$ be a subgroup of $\mathrm{Aut}(n,H,V,b)$. We aim to give a description of the group $D \subset \mathrm{GL}(n, \Z)$. In order to do so, we consider $D$ as a discrete subgroup of a larger Lie group, namely its Zariski closure in $\mathrm{GL}(n, \R)$, and we use results on arithmetic groups to prove Theorem~\ref{structureThm}.

We consider the set of polynomials
\begin{equation}
I(D) := \{ P \in \R[X] \ \vert \ \forall A \in D, \ P(A) = 0 \}.
\end{equation}
The space $I(D)$ is a vector subspace of $\R[X]$, and we claim that there exists a finite set of elements in $\Q[X]$ generating $I(D)$. Indeed, for any $k \in \N$, let
\begin{equation}
I_k(D) := \{ P \in I(D) \ \vert \ \mathrm{deg}(P) \le k \}.
\end{equation}
Then, the space $I_k(D)$ is exactly the subspace of $\R_k[X]$ (the space of polynomials with real coefficients and degree less that $k$) satisfying the equations $P(A) = 0$ for all $A \in D$. Consequently, the coefficients of $P \in I_k(D)$ are constrained by a set of linear equations with integer coefficients, so they can be solved in $\Q[X]$. Now, the $\Q$-dimension of $I_k(D) \cap \Q[X]$ is equal to the $\R$-dimension of $I_k(D)$, and we conclude that $I_k(D) \cap \Q[X]$ is a $\Q$-form of $I_k(D)$.

We now look at the Zariski closure of $D$ in $\mathrm{GL}(n,\R)$, i.e.
\begin{equation}
G := \{ A \in \mathrm{GL}(n, \R) \ \vert \ \forall P \in I(D), \ P(A) = 0 \}. 
\end{equation}
The set $G$ is a linear algebraic group, therefore it is a Lie subgroup of $\mathrm{GL}(n, \R)$. Moreover, $G$ is defined over $\Q$ by the previous discussion. Up to passing to a finite-index subgroup of $D$, we may assume that $G$ is connected in the Zariski topology.

We claim that $G$ is a reductive group. In order to prove this, we consider the unipotent radical $R_u(G)$ of $G$ and we aim to show that this group is trivial. Since $G$ is defined over $\Q$, and $\Q$ is a perfect field, $R_u (G)$ is defined over $\Q$ because it is $\Q$-closed. By definition, $R_u(G)$ is connected, so we can apply \cite[Corollary 18.3]{Borel} to the perfect field $\Q$ and the connected linear algebraic group $R_u(G)$ to conclude that $R_u(G)(\Q)$ (i.e. the rational points of the unipotent radical of $G$) is Zariski-dense in $R_u(G)$. Thus, to prove that $R_u(G)$ is trivial, it is sufficient to prove that $R_u(G)(\Q)$ is trivial.

Stabilizing a subspace of $\R^n$ amounts to satisfying polynomial equations. Therefore, the group $G(\Q)$, which is included in the Zariski closure of $D$ in $\mathrm{GL}(n, \R)$, still preserves the decomposition $H \oplus V$. Moreover, being a similarity once restricted to $(H, b)$ is also a polynomial condition, thus the restriction of $G$ to $V$ consists of $b$-similarities. For any element $A$ of $R_u(G)(\Q)$, since $A$ is unipotent, its restriction to $H$ is a unipotent similarity of $(H,b)$, thus it is the identity. The conclusion then follows from the following lemma:

\begin{lemma}
Let $A$ be an $n \times n$ matrix with coefficients in $\Q$ preserving the decomposition $H \oplus V$. If the restriction of $A$ to $H$ is the identity, then $A = I_n$.
\end{lemma}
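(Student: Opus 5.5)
The plan is to exploit the irrationality of $H$ by looking at the fixed subspace of $A$. Set $B := A - I_n$. This is a matrix with rational coefficients, and by hypothesis $H \subset \ker B$. The kernel of a matrix with rational entries is a rational subspace, i.e. it is spanned by vectors of $\Q^n$: Gaussian elimination over $\Q$ produces a basis of $\ker B \cap \Q^n$ whose $\Q$-dimension equals the $\R$-dimension of $\ker B$. Hence $W := \ker B$ is a rational subspace of $\R^n$ containing $H$. The goal is to show $W = \R^n$, which gives $B = 0$ and so $A = I_n$. I do not expect to need the fact that $A$ preserves $V$.

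Suppose, for contradiction, that $W \neq \R^n$. Since $W$ is rational, so is its annihilator in the dual space. We can therefore pick a nonzero linear form $\varphi$ on $\R^n$ with rational coefficients that vanishes on $W$. Clearing denominators, we may assume $\varphi$ has integer coefficients, so $\varphi(\Z^n) \subset \Z$. Then
\[
\varphi(H + \Z^n) \subset \varphi(W + \Z^n) = \varphi(\Z^n) \subset \Z .
\]

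On the other hand, $H$ is irrational, so $H + \Z^n$ is dense in $\R^n$. Since $\varphi$ is continuous, linear and nonzero, it is surjective onto $\R$, and therefore $\varphi(H + \Z^n)$ is dense in $\R$. This contradicts $\varphi(H+\Z^n)\subset \Z$. Hence $W = \R^n$ and $A = I_n$.

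The only step requiring care is the claim that $\ker B$ is rational and that a proper rational subspace is annihilated by a nonzero integral linear form. Both are elementary linear algebra over $\Q$ (solving linear systems with rational coefficients), so I expect no genuine obstacle. The essential input is the density of $H + \Z^n$ from Definition~\ref{GIBdata}.
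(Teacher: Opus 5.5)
Your proof is correct and uses the same mechanism as the paper's: a linear map with integer coefficients that vanishes on $H$ sends the dense set $H+\Z^n$ into a discrete set, so by continuity it vanishes identically. The paper phrases this through the endomorphism of $\R^n/\Z^n$ induced by $p(A-I_n)$; your detour through the rationality of $\ker(A-I_n)$ and its annihilator is harmless but unnecessary, since the rows of $p(A-I_n)$ are already integral linear forms vanishing on $H$, and your density argument applies to them directly.
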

\begin{proof}
Let $A \in \mathrm{M}_n(\Q)$ whose restriction to $H$ is the identity. There exists $p \in \N^*$ such that $p A \in \mathrm{M}_n(\Z)$, and $B := p(A - I_n)$ restricted to $H$ is the zero endomorphism. Since $B$ has coefficients in $\Z$, it induces a group homomorphism $B_T : T^n \to T^n$, where $T^n := \R^n / \Z^n$. But $B_T$ coincides with $p I_n$ on the image of $H$ in $T^n$, which is dense in $T^n$ because $H$ is irrational. This implies that $B = p I_n$, yielding $A = I_n$.
\end{proof}

As a consequence of this lemma, $A = I_n$ and $R_u(G)(\Q)$ is trivial, thus $G$ is reductive. In particular, $G$ is the almost direct product of the connected component of its center $T$ and of its commutator subgroup $S := [G,G]$ \cite[Proposition 2.2]{BoTi}. In addition, $S$ is semisimple, and the first corollary in \cite[Section 2.3]{Borel} implies that $S$ is a closed Zariski connected subgroup of $\mathrm{GL}(n, \R)$ defined over $\Q$, because $G$ is Zariski connected. We can now apply the Borel Harish-Chandra Theorem \ref{BHCTheorem}. Since $G(\Q)$, and thus $S (\Q)$, has no non-trivial unipotent element, and $S$ is semisimple, we conclude that $D_S := S (\Z)$ is a cocompact lattice of $S$.

\begin{remark}
There are several possibilities for $S$. First, if $G$ is abelian, then $S$ is trivial. The group $S$ could also be compact. In this case, the elements of $D_S$ are torsion elements, and we can avoid this situation by taking a finite-index subgroup of $D$, thanks to Selberg's Lemma. The remaining case is when $S$ is non-compact. In this latter situation, the lattice $D_S$ is infinite and non-abelian, because $S$ is semisimple.
\end{remark}

We first recall that $G$ is isogenous to $T \times S$. The restrictions to $H$ of the elements of $S$ are $b$-isometries because $S$ is a group of commutators. Consequently, if $D$ contains a matrix that restricts to a strict similarity of $(H,b)$, the group $T$ cannot be compact.

We claim that:
\begin{lemma} \label{normalizer}
The normalizer $\mathrm{Nor}_S (G(\Z))$ of $G(\Z)$ inside $S$ is discrete.
\end{lemma}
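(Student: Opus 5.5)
The plan is to show that the identity component of $N := \mathrm{Nor}_S(G(\Z))$ centralizes $G(\Z)$, and then that this centralizer meets $S$ in a finite set; a closed subgroup of a Lie group with finite identity component is discrete. First, $N$ is a closed subgroup of $S$. Indeed, $G(\Z) = G \cap \mathrm{GL}(n,\Z)$ is discrete, so the condition $g G(\Z) g^{-1} = G(\Z)$ is closed in $g$. Hence $N$ is a Lie group and it suffices to show $N^0 = \{1\}$.

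\textbf{Step 1: $N^0$ centralizes $G(\Z)$.} Fix $\gamma \in G(\Z)$. The map $N^0 \to G(\Z)$, $g \mapsto g\gamma g^{-1}$, is continuous with values in a discrete set and $N^0$ is connected. So it is constant, equal to $\gamma$. Therefore $N^0 \subset C_S(G(\Z))$.

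\textbf{Step 2: the centralizer of $G(\Z)$ equals the centralizer of $G$.} Centralizing a fixed matrix is a polynomial condition. Thus $C(G(\Z))$ also centralizes the Zariski closure of $G(\Z)$, which contains $D$ (up to the finite-index reduction already made), and hence contains $\overline{D}^{\mathrm{Zar}} = G$. So
\[
N^0 \subset C_S(G) \subset C(G) \cap S \subset T \cap S,
\]
where $C(G)$ is the centre of $G$. Since $G$ is an almost direct product of $T$ and $S$, the intersection $T \cap S$ is finite; it is contained in the centre of the semisimple group $S$, which is finite.

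\textbf{Conclusion.} $N^0$ is a connected subgroup of a finite group, so $N^0 = \{1\}$ and $N$ is discrete.

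The only delicate point is Step 2: we need that $G(\Z)$ is Zariski dense in $G$. This holds because $D \subset G(\Z)$ and $G$ was defined as the Zariski closure of $D$. If one had not passed to the finite-index subgroup, one would instead work with the Zariski identity component and the same argument applies. The finiteness of $T \cap S$ is standard for reductive groups.
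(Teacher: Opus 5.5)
Your proof is correct and is essentially the paper's argument: discreteness of $G(\Z)$ forces normalizing elements close to the identity to centralize $G(\Z)$, Zariski density upgrades this to centralizing $G$, and finiteness of the centre of $S$ concludes. The only differences are minor. You get the centralizing property on all of $N^0$ by connectedness, which is why you need the closedness of $N$ that you checked, whereas the paper picks finitely many $\gamma_1,\ldots,\gamma_\ell$ generating a Zariski-dense subgroup and intersects finitely many neighbourhoods. Also, your inclusion $C(G)\cap S\subset T\cap S$ can fail when $T$ is only the identity component of $C(G)$, but you do not need it, since $C_S(G)\subset C(S)$ directly and $C(S)$ is finite.
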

\begin{proof}
Let $(\gamma_1, \ldots, \gamma_\ell)$ be elements of $D$ such that $\overline{\langle \gamma_1, \ldots, \gamma_\ell \rangle}^\mathrm{Zar} = G$. This tuple exists because $G$ is finite-dimensional and a linear algebraic group has finitely many connected components.

For any $1 \le i \le \ell$ there exists a small neighborhood $U_i$ of the identity in $S$ such that for all $s' \in U_i$, $s'\gamma_i s'^{-1} = \gamma_i$ because $D_S$ is discrete and conjugation is continuous. Taking $U := \bigcap_{1 \le i \le \ell} U_i$, the open set $U$ is a small neighbourhood of the identity in $S$ such that $\mathrm{Nor}_S (G(\Z)) \cap U$ centralizes $\langle \gamma_1, \ldots, \gamma_\ell \rangle$, and thus $G$. After possibly shrinking $U$, we may assume that the intersection of $U$ with the center of $S$ contains only the identity, so $\mathrm{Nor}_S (G(\Z)) \cap U$ contains only the identity. This proves the lemma.
\end{proof}

Applying Lemma~\ref{normalizer} and using the fact that $S / D_S$ is compact, the image of $\mathrm{Nor}_S (G(\Z))$ in $S / D_S$ is finite. Hence, there exists $k > 0$ such that for any $a \in G(\Z) \subset \mathrm{Nor}_S (G(\Z))$, writing $a = st$ with $s \in S$ and $t \in T$, one has $s^k \in D_S$. This yields $a^k = s^k t^k$, and $s^k \in \mathrm{GL}(n,\Z)$, thus $t^k \in \mathrm{GL}(n,\Z)$. We deduce that $D_T D_S$ has finite index in $G(\Z)$. Since $D_S$ is a lattice in a semisimple Lie group, it is finitely presented. The group $D_T$ is a discrete subgroup of an abelian Lie group, thus it is finitely presented too. Therefore, $\Gamma := D_T D_S$ is finitely presented, and hence so is $G(\Z)$.

We now prove that $D_T$ is a lattice in $T$. We first show that $D_T$ is Zariski dense in $T$. Since the group $D_T D_S$ has finite index in $\Gamma$, the Zariski closure of $D_T D_S$ has finite index in $G$. Moreover, $G$ is Zariski connected by assumption, thus $D_T D_S$ is Zariski dense in $G$. We also know that $D_S$ is a lattice in the semisimple group $S$, thus $D_S$ is Zariski dense in $S$. The Lie group $G$ is isogenous to the product $T \times S$ and $D_T D_S$ is Zariski dense in $G$. It follows that $D_T$ is Zariski dense in $T$.

We are now in a position to prove that $T$ is $\Q$-anisotropic, i.e. it admits no non-trivial rational character defined over $\Q$. This will allow us to apply the Borel Harish-Chandra theorem. Let $\chi$ be a rational character defined over $\Q$ on $T$. By \cite[8.2 (c) and 8.5]{Borel}, $T$ is conjugate to a diagonal group over $\C$, and, in this basis of diagonalization, the character $\chi$ is of the form
\begin{equation}
\chi = \prod_{i=1}^n \chi_i^{m_i},
\end{equation}
where $\chi_i$ is the $i$-th diagonal entry, and $m_i \in \Z$. For any $g \in D_T$, the matrix $g$ has integer coefficients, so $\chi (g)$ lies in $\Q$ and is a product of eigenvalues of $g$ and their inverses. Since the eigenvalues of $g$ are algebraic units, $\chi(g)$ is an algebraic unit lying in $\Q$, thus it is $\pm 1$. It follows that $\chi^2$ is trivial. By \cite[8.2 (c)]{Borel}, and because $D_T$ is Zariski dense in $T$, the algebraic torus $T$ is the intersection of the kernels of the characters of $T$ vanishing on $D_T$. In particular, $T \subset \mathrm{ker} (\chi^2)$, and $\chi^2$ is trivial. Hence, $\chi$ has value $\pm 1$ on $T$. Since $\chi$ is continuous with respect to the Zariski topology and has value $1$ at the identity matrix, it is identically	 equal to $1$ on the Zariski connected group $T$. We conclude that $T$ is $\Q$-anisotropic, and, by the Borel Harish-Chandra theorem \ref{BHCTheorem}, $T(\Z) = D_T$ is a lattice in $T$.

We proved the following theorem:

\begin{theorem} \label{structureThm0}
Let $(n, H, V, b)$ be GIB arithmetic data and let $D$ be a subgroup of $\mathrm{Aut}(n, H, V, b)$. Then,
\begin{enumerate}
\item the Zariski closure of $D$ is commensurable to a reductive $\Q$-group $G$ isogenous to $T \times S$, where $T$ is the center of $G$ and $S$ is its commutator group.
\item if $D$ contains a matrix that restricts to a non-isometric similarity of $(H,b)$, then $T$ is non-compact.
\item the group $T$ is an anisotropic torus, and $D_T := T(\Z)$ is a lattice in $T$.
\item the set $D_S := S(\Z)$ is a cocompact arithmetic subgroup in the semisimple linear algebraic $\Q$-group $S$.
\item the group $\Gamma := D_T \times D_S$ has finite index in $G(\Z)$. In particular, $\Gamma$ and $G(\Z)$ are finitely presented.
\end{enumerate}
\end{theorem}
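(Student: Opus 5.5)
The proof assembles the argument of this section into the five items, in the order in which they were established.

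We first pass to a finite-index subgroup of $D$ so that its Zariski closure $G$ is Zariski connected; this only changes $G$ up to commensurability. The closure $G$ is defined over $\Q$, because the ideal $I(D)$ has a $\Q$-form degree by degree. We then show that $G$ is reductive. Since $R_u(G)$ is defined over the perfect field $\Q$, its rational points are Zariski dense in it by \cite[Corollary 18.3]{Borel}, so it suffices to treat a rational unipotent element $A$. Such an $A$ preserves $H \oplus V$ and restricts to a unipotent $b$-similarity of $H$, hence to the identity on $H$. The lemma on rational matrices fixing the irrational subspace $H$ then gives $A = I_n$. This uses the density of the image of $H$ in $T^n$. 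Reductivity yields the almost direct product $G = T\cdot S$, with $T$ the identity component of the centre and $S = [G,G]$ semisimple, Zariski connected and defined over $\Q$. This proves item (1).

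For item (2), elements of $S$ are products of commutators, so they act on $H$ by similarities of ratio $1$, that is, by $b$-isometries. A strict similarity in $D$ must therefore come, up to an isometric factor, from the $T$-component. Hence $T$ cannot be compact.

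For item (4), every unipotent element of $S(\Q)$ is trivial by the argument above. Godement's criterion in Theorem~\ref{BHCTheorem} then shows that $D_S = S(\Z)$ is a cocompact lattice in $S$.

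For item (5), Lemma~\ref{normalizer} says that $\mathrm{Nor}_S(G(\Z))$ is discrete. Its image in the compact quotient $S/D_S$ is therefore finite, which produces a uniform $k$ with $s^k \in D_S$ for every $a = st \in G(\Z)$. It follows that $t^k \in T(\Z)$, and so $D_T D_S$ has finite index in $G(\Z)$. Finite presentability then follows, because lattices in semisimple groups are finitely presented and discrete subgroups of abelian Lie groups are finitely generated abelian. Since $D_T$ is central, $D_T \cap D_S$ is finite, so calling $\Gamma = D_T \times D_S$ a product is harmless up to finite index.

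Item (3) is the step I expect to be the main obstacle. First, $D_T$ is Zariski dense in $T$: $D_T D_S$ is Zariski dense in the connected group $G$, $D_S$ is dense in $S$, and $G$ is isogenous to $T \times S$. Next, take a $\Q$-character $\chi$ of $T$. Diagonalizing $T$ over $\C$ shows that $\chi(g)$, for $g \in D_T$, is a rational number that is a product of algebraic units, so $\chi(g) = \pm 1$. Hence $\chi^2$ is trivial on $D_T$, and therefore trivial on $T$ by density. By connectedness $\chi = 1$. So $T$ is $\Q$-anisotropic, and Borel–Harish-Chandra applied to $T$ gives that $D_T = T(\Z)$ is a lattice, indeed a cocompact one, since a torus contains no nontrivial unipotents.

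The delicate points are the density transfer from $G$ to $T$ and making sure that $D_T$ is indeed $T(\Z)$ up to finite index.
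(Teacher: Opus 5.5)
Your proposal is correct and is essentially the paper's own proof, in the same order: the $\Q$-form of $I(D)$ and the passage to a Zariski-connected closure, then reductivity via Zariski density of $R_u(G)(\Q)$ together with the irrationality lemma, Godement's criterion for $S$, Lemma~\ref{normalizer} to get $D_TD_S$ of finite index in $G(\Z)$, and finally Zariski density of $D_T$ plus the algebraic-unit argument for the $\Q$-anisotropy of $T$. One small point, which you share with the paper: the fact that every $k$-th power lies in $D_TD_S$ does not by itself give finite index. The finiteness of $\mathrm{Nor}_S(G(\Z))/D_S$ gives it directly, because for $a=st$ the class of $s$ modulo $D_S(S\cap T)$ already determines the coset $aD_TD_S$.
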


 Theorem~\ref{structureThm} is then a direct consequence of Theorem~\ref{structureThm0}.

\subsection{Constraints on the semisimple part} Let $(n,H,V,b)$ be GIB arithmetic data. Let $D$ be a GIB monodromy subgroup of $(n,H,V,b)$. If $G$ denotes the Zariski closure of $D$, then Theorem~\ref{structureThm0} implies that $G(\Z)$ is a GIB monodromy subgroup of $(n,H,V,b)$ as well. Moreover, $G$ is a reductive group, and $[G,G]$ is a semisimple algebraic group.

It is therefore natural to ask whether a given semisimple subgroup $S$ of $\mathrm{SL}(m,\R)$ ($m \ge 1$) defined over $\Q$ could arise as $[G, G]$ for suitable GIB arithmetic data. We recall that any semisimple Lie subgroup of $\mathrm{SL}(m,\R)$ is almost Zariski closed \cite[Theorem A4.9]{Mor}.

In this section, we prove that, up to adding a compact factor, any semisimple subgroup $S$ may be realized as the commutator subgroup of $G$. The additional compact factor is mandatory, because $S$ should preserve the decomposition $H \oplus V$, and its projection onto $\mathrm{GL}(H)$ is compact. In addition, $H$ is irrational, so this projection restricted to $S(\Z)$ is actually injective.

We use unitary groups, as described in \cite[Example 6.3.2]{Mor}, to prove our claim.

We fix a totally real number field $k$ and we denote by $S^\infty$ its set of archimedean places. Let $a > 0$, $b > 0$ be elements of $k$ such that $\sigma(a) < 0$ and $\sigma(b) < 0$ for all non-trivial $\sigma \in S^\infty$. Let $K$ be the quadratic extension of $k$ defined by $K := k[\sqrt{a}]$, and let $\tau$ be the non-trivial element of $\mathrm{Gal}(K/k)$. We denote by $\mathcal O_K$ the ring of integers of $K$, and we consider the $m \times m$ matrix
\begin{equation}
A := \mathrm{Diag} (b, \ldots, b, -1).
\end{equation}

The semisimple group $S$ is embedded into $\mathrm{SL}(2m,\R)$ via the map
\begin{equation}
\phi : \mathrm{SL}(m,\R) \to \mathrm{SL}(m,\R) \times \mathrm{SL}(m,\R), g \mapsto (g, (g^T)^{-1}).
\end{equation}
We consider the unitary group
\begin{equation}
\mathrm{SU} (A, \tau; \mathcal O_K) := \{ g \in \mathrm{SL}(m, \mathcal O_K) \ \vert \ \tau(g^T) A g = A \}.
\end{equation}

We define a $k$-form of $\R^{2m}$ by setting $W := \{ (s, A \tau(s)), \ s \in K^{m} \}$. One verifies that $W$ is a $k$-vector space and that the natural map $W \otimes_k \R \to \R^{2m}$ is an isomorphism.

We claim that $\phi(S)$  is defined over $k$ with respect to the $k$-form $E_k$ induced by $W$ on $\mathrm{End}(\R^{m}) \times \mathrm{End}(\R^{m})$. To prove this, first remark that
\begin{equation}\label{Fform}
\begin{aligned}
E_k &= \{ (B, C) \in \mathrm{End}(\R^{m}) \times \mathrm{End}(\R^{m}) \ \vert \ \ B(V) \subset V \} \\
&= \{ (B, A \tau(B) A^{-1}) \ \vert \ \ B \in \mathrm{Mat} (m, K) \}.
\end{aligned}
\end{equation}
Let $\mathcal Q$ be the subset of $\Q[x_1, \ldots, x_m]$ defining the algebraic group $S$, which exists because $S$ is defined over $\Q$. Any element $B$ of $\mathrm{SL}(m, K)$ can be written as $B =: M + \sqrt{a} N$, where $M$ and $N$ are matrices with coefficients in $k$. Consequently, for any polynomial $P \in \mathcal Q$, the equation
\[
0 = P(B) = P(M+\sqrt{a} N)
\]
splits into a system of polynomial equations with coefficients in $k$, where the entries are the coefficients of $M$ and $N$. Moreover, if $(B, C) := (B, A \tau(B) A^{-1}) \in E_k$ with $B = M + \sqrt{a} N$ as above, one has
\begin{align*}
2M = B+ A^{-1} C A && \text{ and } && 2 \sqrt{a} N = B - A^{-1} C A,
\end{align*}
so the entries of $M$ and $N$ are polynomials in the entries of $(B,C)$. Altogether, $\mathcal Q$ induces a set of polynomial equations with coefficients in $k$ on $E_k$. We add to this set the equation $C B^T = I_m$, and we denote by $\mathcal Q'$ the set of all polynomial equations obtained this way.

Consider the algebraic variety
\begin{equation}
\mathrm{Var}(\mathcal Q') = \{ (g,g') \in \mathrm{SL}(m,\R) \times \mathrm{SL}(m,\R) \ \vert \ P(g,g') = 0 \text{ for all $P \in \mathcal Q'$}\}.
\end{equation}
By construction, one has $\mathrm{Var}(\mathcal Q') = \phi(S)$, and we conclude that $\phi(S)$ is defined over $k$.

We consider the subgroup of $V$ defined by $\mathcal L := \{ (s, A \tau(s)), \ s \in \mathcal O_K^{m} \}$. It  is the set of $\mathcal O_k$-points of $V$, where we recall that $\mathcal O_k$ is the ring of algebraic integers of $k$. The subgroup of $\phi(S)$ defined by
\begin{equation}
G := \{ \phi(g) \ \vert \ \ g \in S, \ \phi(g) \mathcal L \subset \mathcal L \}
\end{equation}
is the group of $\mathcal O_K$-points of $\phi(S)$. From Equation~\eqref{Fform}, any element $(g,(g^T)^{-1})$ of $\phi(S)$ with $g \in \mathrm{SL}(m, \mathcal O_K)$ satisfies $\tau((g^T)^{- 1}) = A^{-1} g A$, and thus $\tau(g^T) A g = A$. Therefore,
\[
G = \phi(S \cap \mathrm{SU} (A, \tau; \mathcal O_K)).
\]

We now use restriction of scalars to conclude that
\[
D_S := \prod\limits_{\sigma \in S^\infty} \sigma(\phi(S \cap \mathrm{SU} (A, \tau; \mathcal O_K)))
\]
is an arithmetic subgroup of
\[
S' := \mathrm{Res}_{k/\Q} (\phi(S)) = \prod\limits_{\sigma \in S^\infty} \phi(S)^\sigma.
\]
Let $\sigma \in S^\infty$ be a non-trivial place. Since $\sigma(a) < 0$ and $\sigma(b) < 0$, one has
\[
\sigma(\phi(S \cap \mathrm{SU} (A, \tau; \mathcal O_K))) \subset \phi(\mathrm{SU} (m)),
\]
and this last group is compact. Hence, $D_S$ is cocompact in $S'$ by Godement's criterion, and it is conjugate to a subgroup of $\mathrm{GL}(n := \vert S^\infty \vert 2m, \Z)$. In addition, Borel's density theorem implies that the Zariski closure of $D_S$ is isomorphic to $S$, up to a compact factor.

The group $D_S$ acts naturally on $\R^n \simeq \prod_{\sigma \in S^\infty} (k^\sigma)^{2m}$ (recall that $k^\sigma$ is the completion of $\sigma(k)$), where the factors $(k^\sigma)^{2m}$ are all isomorphic to $\R^{2m}$. We define the subspace $H$ of $\R^n$ as one of the factors corresponding to a non-trivial place, and $V$ is the product of all the other factors. In particular, $D_S$ restricts to a relatively compact subgroup of $\mathrm{GL}(H)$ on $H$. Therefore, this restriction preserves an inner product $b$.

It remains to find a matrix commuting with $D_S$ and acting as a strict similarity on $H$. This is easily done by choosing a non-trivial unit $\alpha \in k$, and considering the matrix $A_0$ acting by multiplication by $\sigma(\alpha)$ on each $(k^\sigma)^{2m}$, for all $\sigma \in S^\infty$. This matrix and its inverse preserve the $\Z$-lattice $\{ \{\sigma(x)\}_{\sigma \in S^\infty} \ \vert \ x \in \mathcal L\}$, and they restrict to similarities of $(H,b)$. Since $k$ is totally real, this similarity is not an isometry.

The group $\langle A_0 \rangle \times D_S$ is a GIB monodromy subgroup of $(n,H,V,b)$, and its Zariski closure is the product $T \times S'$, where $T$, the Zariski closure of $\langle A_0 \rangle$, is a torus.

\subsection{Constraints on the arithmetic group} The discussion of the previous section shows that any semisimple subgroup $S$ of $\mathrm{SL}(m, \R)$ can occur in Theorem~\ref{structureThm0}, up to adjoining a compact factor. A natural follow-up question is whether any arithmetic subgroup of a semisimple group can occur as the $D_S$ component (in the notations of Theorem~\ref{structureThm0}) of a GIB monodromy subgroup of suitable GIB arithmetic data.

Let $S$ be a semisimple algebraic subgroup of $\mathrm{SL}(n, \R)$ defined over $\Q$. By \cite[Proposition 5.5.12]{Mor} and the proof therein (see also \cite[6.21 (ii)]{BoTi}), $S$ is isogenous to a product of semisimple groups $S_1 \times \ldots \times S_m$, where each $S_i$ is defined over $\Q$, and such that $S_i(\Z)$ is irreducible as a lattice in $S_i$. Let $S_{i,1} \times \ldots \times S_{i,r_i}$ be the decomposition of $S_i$ into simple groups. There exists an algebraic number field $k_i$ of degree $r_i$, and set of archimedean places $(S^\infty)_i$, such that $S_{i,1}$ is an algebraic subgroup of $\mathrm{SL}(n_i, k^\sigma)$ defined over $k_i$, and $S_i$ is isogenous to
\begin{equation}
\prod_{\sigma \in (S^\infty)_i} (S_{i,1})^{\sigma}.
\end{equation}
In this case, the arithmetic group $S_i(\Z)$ is the image of the $\mathcal O_{k_i}$-points of $S_i$ by the map
\begin{equation}
\Delta_i : g \mapsto \{ \sigma(g) \}_{\sigma \in (S^\infty)_i}.
\end{equation}

The group $S(\Z)$ should preserve a decomposition $H \oplus V$, as well as an inner product $b$ on $H$. Therefore, it projects injectively into a compact subgroup of $\mathrm{GL}(H)$. Consequently, each $S_i$ must admit a compact factor, and we may assume that this factor is $S_{i,1}$. By construction, $S_i(\Z)$ projects injectively and densely into $S_{i,1}$.

Summarizing, $S$ is isogenous to
\begin{equation}
\prod_{i=1}^m \prod_{\sigma \in (S^\infty)_i} (S_{i,1})^{\sigma}.
\end{equation}
We may discard the simple groups $S_i$ in this decomposition. Indeed, assume that $S_i$ is simple. Since $S_i$ preserves the decomposition $H \oplus V$ and projects injectively into a compact subgroup of $\mathrm{GL}(H)$, the whole group $S_i$ is compact. It follows that $S_i(\Z)$ is finite, and we may eliminate it using Selberg's lemma.

We now ask whether $S$ can be realized as the semisimple part of a GIB monodromy subgroup of GIB arithmetic data. We first observe that each semisimple group $S_i$ has a natural faithful representation $\rho_i$ on the vector space
\begin{equation}
E_i := \bigoplus_{\sigma \in (S^\infty)_i} (k_i^\sigma)^{n_i}.
\end{equation}
Hence, we define a representation of $S$ by taking the external tensor product
\begin{equation}
\rho := \rho_1 \boxtimes \ldots \boxtimes \rho_m.
\end{equation}
We define two subspaces of $\bigotimes_{i=1}^m E_i$ by
\begin{align}
H := \bigotimes_{i=1}^m (k_i^\mathrm{id})^{n_i} && \text{and} && V := \bigoplus_{\substack{(\sigma_1, \ldots, \sigma_m) \in \prod_{i=1}^m (S^\infty)_i \\ (\sigma_1, \ldots, \sigma_m) \neq (\mathrm{id}, \ldots, \mathrm{id})}} \ \bigotimes_{i=1}^m (k_i^{\sigma_i})^{n_i}.
\end{align}
Since $S$ projects as a compact subgroup of $\mathrm{GL}(H)$, there exists an inner product $b$ on $H$ preserved by this projection.

It remains to find a matrix whose restriction to $(H,b)$ is a non-isometric similarity. None of the factor $S_i$ is simple, therefore we can apply the following lemma:

\begin{lemma}
Let $k$ be an algebraic number field and $S^\infty$ its set of archimedean places. If $\vert S^\infty \vert \ge 2$, then there exists a unit $\alpha$ of $k$ with $\vert \alpha \vert \neq 1$.
\end{lemma}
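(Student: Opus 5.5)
This is a direct consequence of Dirichlet's unit theorem, and the plan is to read it off from the rank of the unit group. Write $s$ for the number of real embeddings and $t$ for the number of pairs of complex embeddings of $k$. Then $\vert S^\infty \vert = s+t$, and Dirichlet's theorem says that $\mathcal O_k^\times$ is a finitely generated abelian group of rank $s+t-1$. The hypothesis $\vert S^\infty\vert \ge 2$ gives rank at least $1$, so $\mathcal O_k^\times$ contains a unit $\alpha$ of infinite order.

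It remains to check that such an $\alpha$ can be chosen with $\vert \alpha \vert \neq 1$. Here $\vert\cdot\vert$ is the absolute value at the distinguished place used in the paper, the one corresponding to the factor $H$, and we write it $\sigma_0$. I would use the logarithmic embedding
\[
\ell : \mathcal O_k^\times \to \R^{s+t}, \qquad u \mapsto \bigl(\log \vert \sigma(u) \vert_\sigma\bigr)_{\sigma \in S^\infty}.
\]
By Dirichlet's theorem, its image is a full lattice in the hyperplane $\{\sum_\sigma x_\sigma = 0\}$, where complex places carry weight $2$ in the sum. Suppose every unit satisfied $\vert \sigma_0(u)\vert = 1$. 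Then the image of $\ell$ would lie in the subspace $\{x_{\sigma_0} = 0\} \cap \{\sum_\sigma x_\sigma = 0\}$, which has dimension $s+t-2$. This contradicts the fact that the image spans a space of dimension $s+t-1$. So some unit $\alpha$ has $\vert\sigma_0(\alpha)\vert \neq 1$.

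By symmetry the same argument works at any chosen place. Replacing $\alpha$ by $\alpha^{-1}$ if necessary, we may even take $\vert \sigma_0(\alpha)\vert > 1$.

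The only delicate point is the spanning statement for the log-unit lattice. That is exactly the nontrivial content of Dirichlet's theorem, so I would simply invoke it rather than reprove it.
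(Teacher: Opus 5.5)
Your proof is correct and is essentially the paper's argument. Suppose every unit has modulus $1$ at the distinguished place. Then the log-unit lattice lies in the intersection of Dirichlet's weighted trace-zero hyperplane with $\{x_{\sigma_0}=0\}$, which has dimension $s+t-2$ because $s+t\ge 2$, contradicting the rank $s+t-1$ given by Dirichlet's unit theorem. Your preliminary step producing a unit of infinite order is harmless but not needed, as you yourself note.
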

\begin{proof}
Let $\sigma_1, \ldots, \sigma_s$ be the real places of $k$ and let $\sigma_{s+1}, \ldots, \sigma_{s+t}$ be the complex ones. We assume that all units of $k$ have complex modulus equal to $1$, and we prove the lemma by contraposition.

By Dirichlet's units theorem, the inclusion
\[
\Delta : \mathcal O^\times_k \to \R^{s+t}, \ u \mapsto (\ln \vert \sigma_1(u) \vert, \ldots, \ln \vert \sigma_{s+t}(u) \vert)
\]
is a lattice in the hyperplane of $\R^{s+t}$ defined in the canonical coordinates by
\[
x_1 + \ldots + x_s + 2 x_{s+1} + \ldots + 2 x_{s+t} = 0.
\]
In particular, $\Delta (\mathcal O^\times_k)$ has rank $s+t-1$. The function $\ln \vert \cdot \vert$ vanishes on $\mathcal O^\times_k$, and we may assume that $\sigma_1 = \mathrm{id}$ (the complex case is treated in the same way). Then, $\Delta (\mathcal O^\times_k)$ is a discrete subgroup of the space defined by the equation
\[
x_2 + \ldots + x_s + 2 x_{s+1} + \ldots + 2 x_{s+t} = 0,
\]
and its rank is at most $s + t - 2$. This is only possible if $s + t = 1$, i.e. $s = 1$ or $t = 1$, which amounts to saying $\vert S^\infty \vert = 1$.
\end{proof}

This lemma implies that there exists $\alpha \in k_1$ with $\vert \alpha \vert \neq 1$. We now consider the endomorphism $A_0$ of $E_1 = \bigoplus_{\sigma \in (S^\infty)_1} (k_1^\sigma)^{n_1}$ acting by multiplication by $\sigma(\alpha)$ on $(k_1^\sigma)^{n_1}$, for all $\sigma \in (S^\infty)_1$. The group $\langle A_0 \rangle$ preserves the canonical $\Z$-lattice defined in Section~\ref{restrictionSection}. In addition, $A_0$ acts naturally on $\bigotimes_{i=1}^m E_i$ by multiplication on the first factor of the tensor product. Hence, $\langle A_0 \rangle \times S(\Z)$ is a GIB monodromy subgroup of $(n,H,V,b)$ for a suitable integer $n$, and its Zariski closure is $T \times S$, where $T$ is the Zariski closure of $\langle A_0 \rangle$.

The discussion of this section is summarized in the following proposition.

\begin{proposition}
The groups $D_S$ appearing in Theorem~\ref{structureThm0} are exactly, up to taking a finite-index subgroup, the products of arithmetic groups of the form $\mathrm{Res}_{k/\Q}(G)(\Z)$, where $G \subset \mathrm{SL}(m,\R)$ is defined over the algebraic number field $k$, and one of the factors of $\mathrm{Res}_{k/\Q}(G)(\Z)$ (see Proposition~\ref{restrictionscalars}) is compact.
\end{proposition}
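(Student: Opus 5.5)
The plan is to prove the two inclusions separately; both are essentially a summary of the discussion preceding the proposition.

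\emph{Necessity.} Let $D$ be a GIB monodromy subgroup with Zariski closure $G$, and let $D_S = S(\Z)$ be the arithmetic group given by Theorem~\ref{structureThm0}.
\begin{enumerate}
\item Apply the decomposition from \cite[Proposition 5.5.12]{Mor}. After passing to a finite-index subgroup, $S(\Z)$ is commensurable with a product of irreducible lattices $S_i(\Z)$.
\item Write each $S_i$, up to isogeny, as a restriction of scalars $\mathrm{Res}_{k_i/\Q}(S_{i,1})$. Then $S_i(\Z) = \Delta_i(S_{i,1}(\mathcal O_{k_i}))$, as in Proposition~\ref{restrictionscalars}.
\item Show that some factor of each $S_i$ is compact. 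The restriction of $D_S$ to $H$ consists of $b$-isometries, so $S$ projects into the compact group $\mathrm{O}(H,b)$. By the Lemma proved above (a rational matrix that is the identity on the irrational subspace $H$ is $I_n$), this projection is injective on $S(\Z)$, hence on each $S_i(\Z)$.
\item Suppose the image of $S_i$ had no compact factor. Then $S_i$ would be generated by its non-compact simple factors, and by Borel density $S_i(\Z)$ would be Zariski dense in $S_i$. A continuous homomorphism from a non-compact simple group into a compact group has finite image. The injective map from $S_i(\Z)$ into a compact group would therefore force $S_i(\Z)$ to be finite. Selberg's lemma then lets us discard this factor.
\end{enumerate}
The delicate part is making this step rigorous up to isogeny. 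I would use that the kernel of the projection to $\mathrm{GL}(H)$ is a normal $\Q$-subgroup. It meets $S_i(\Z)$ trivially, so by irreducibility of $S_i(\Z)$ it must be finite. Hence $S_i$ embeds, up to isogeny, into $\mathrm{O}(H,b)$ on at least one simple factor.

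\emph{Sufficiency.} This uses the construction preceding the statement. Start from a product $\prod_i \mathrm{Res}_{k_i/\Q}(G_i)(\Z)$ in which each factor has a compact place; we may assume it is the identity place.
\begin{enumerate}
\item Form the representation $\rho=\rho_1\boxtimes\cdots\boxtimes\rho_m$. Take $H=\bigotimes_i (k_i^{\mathrm{id}})^{n_i}$ and let $V$ be the sum of the other tensor summands.
\item Let $b$ be an inner product on $H$ invariant under the compact projection.
\item Check that $H$ is irrational. For a single factor, $\Delta(\mathcal O_k^{n})$ projects densely to any proper sub-sum of places. For the tensor product, I would instead pick $H$ to be a single place factor as in the unitary-group construction, or argue density of the $\Z$-lattice projection directly.
\item Produce a strict similarity. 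The Lemma on units gives $\alpha\in\mathcal O_{k_1}^\times$ with $|\alpha|\neq 1$, using $|S^\infty_1|\ge2$, which holds since the factor is non-compact. Let $A_0$ be multiplication by $\sigma(\alpha)$ on each place. It preserves the lattice, commutes with $S(\Z)$, and acts on $H$ as multiplication by $\alpha$, a strict similarity.
\end{enumerate}
Then $\langle A_0\rangle\times S(\Z)$ is a GIB monodromy subgroup, and its $D_S$ part is the given group up to finite index.

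The main obstacle I expect is the irrationality of $H$ in the tensor construction together with the isogeny bookkeeping. If necessary, I would fall back on realizing each factor separately and taking direct sums, with $H$ the sum of the compact-place pieces.
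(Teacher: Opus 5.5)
Your route is the paper's: Morris's decomposition of $S$ into pieces $\mathrm{Res}_{k_i/\Q}(S_{i,1})$ with irreducible lattices, a compact factor forced by the injection of $S(\Z)$ into $\mathrm{O}(H,b)$, and, for the converse, the external tensor product $\rho_1\boxtimes\cdots\boxtimes\rho_m$ with a unit from the lemma.

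\emph{Necessity, step 4.} Your first argument is correct. A continuous homomorphism from a semisimple group without compact factors into $\mathrm{O}(H,b)$ is trivial on the identity component, so it cannot be injective on the infinite lattice $S_i(\Z)$. This yields a contradiction, not a factor to be discarded by Selberg. Drop the proposed rigorization, though: it is wrong. The kernel of $S\to\mathrm{GL}(H)$ is not a $\Q$-subgroup, because $H$ is not rational. In Example~\ref{FundEx} this kernel is $\mathrm{SO}(q)$, which Galois conjugation exchanges with $\mathrm{SO}(q^*)$. Nor does trivial intersection with an irreducible lattice force a normal subgroup to be finite. Were that argument valid, every $S_i$ would be compact.

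\emph{Sufficiency: the genuine gap.} You flag the irrationality of $H=\bigotimes_i(k_i^{\mathrm{id}})^{n_i}$ but do not settle it, and the paper's construction also takes it for granted. It is false in general.
\begin{itemize}
\item The $\Q$-structure on $\bigotimes_iE_i$ is $R^N$, with $R=k_1\otimes_\Q\cdots\otimes_\Q k_m$ and $N=\prod_i n_i$. The subspace $H$ is the component at the real embedding $(\mathrm{id},\ldots,\mathrm{id})$ of $R$. This embedding factors through a single field factor $L$ of $R$, so $H\subset L^N\otimes_\Q\R$. That is a rational subspace, and it is proper unless $R$ is a field.
\item Concretely, take $k_1=k_2=\Q(\sqrt2)$ and $n_1=n_2=1$. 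Lattice vectors are $(xy,\,x\bar y,\,\bar xy,\,\bar x\bar y)$, and $z_2+z_3=\mathrm{Tr}(x\bar y)$ is an integral functional vanishing on $H$.
\item Your direct-sum fallback fails at the similarity step. The element $\bigoplus_i A_i$ built from separate realizations acts on $H_i$ by $\alpha_i\in\mathcal O_{k_i}^\times$. It is therefore a similarity of $H$, for any inner product, only if all $\vert\alpha_i\vert$ coincide. For $\Q(\sqrt2)$ and $\Q(\sqrt3)$ this is impossible, since $(1+\sqrt2)^a=\pm(2+\sqrt3)^b$ forces $a=b=0$.
\end{itemize}

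\emph{A fix.} Work over the compositum $K\subset\R$ of the $k_i$.
\begin{itemize}
\item Let $S_{i,1}(\mathcal O_{k_i})$ act $K$-linearly on $K^{n_i}$, viewed as a $\Q$-space with lattice $\mathcal O_K^{n_i}$. Every factor $S_{i,1}^\sigma$ still appears, at the places of $K$ above $\sigma$.
\item Take $H=\bigoplus_i(K^{\mathrm{id}})^{n_i}$. It is irrational, because a rational functional $\mathrm{Tr}_{K/\Q}\langle c,\cdot\rangle$ vanishing on the identity component must have $c=0$.
\item Let $A_0$ be multiplication by a unit $\beta\in\mathcal O_K^\times$ with $\vert\beta\vert\neq1$. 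Such a unit exists because $K$ has at least two places. Then $A_0$ commutes with $S$ and acts on $H$ as the real scalar $\beta$, a strict similarity.
\end{itemize}
Equivalently, you can restrict the tensor representation to the rational summand $L^N\otimes_\Q\R$.
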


\section{From GIB monodromy subgroups to GIB manifolds: proof of Theorem~\ref{GIBmonodromy}} \label{FromtoGIB}

We return to our original motivation, namely to find GIB manifolds with large monodromy groups, and we prove Theorem~\ref{GIBmonodromy}. To construct examples, we first recall that GIB manifolds are equivalent to LCP manifolds \cite[Theorem 1.9]{FZ2}. This equivalence allows us to use a construction for LCP manifolds described in \cite{Fla25}, which we outline below for the convenience of the reader.

Let $(n,H,V,b)$ be GIB arithmetic data and let $D$ be a GIB monodromy subgroup of $(n,H,V,b)$. Let $X$ be a compact manifold together with a surjective group homomorphism $\varphi : \pi_1(X) \to D$. We denote by $\tilde X$ the universal cover of $X$. 

Consider the group of diffeomorphisms of $\R^n \times \tilde X$ defined by
\begin{equation}
\Gamma := \Z^n \rtimes \{ \R^n \times X \ni (a,x) \mapsto (\varphi(\gamma) a, \gamma(x)) \ \vert \ \gamma \in \pi_1(X) \}.
\end{equation}
Then the quotient manifold $\Gamma \backslash (\R^n \times \tilde X)$ is compact and carries a GIB structure. Moreover, the monodromy group of this GIB manifold is $D$.

\begin{example}
We return to Example~\ref{FundEx}. In this situation, the group $D := \langle A \rangle \times \Gamma_1$ can be realized, up to finite index, as the fundamental group of $S^1 \times C$ where $C$ is a compact hyperbolic surface. Indeed, $\Gamma_1 \subset \mathrm{PSL}(2,\R)$ (the isometry group of $\HH^2$), and $\Gamma_1$ is a lattice in $\mathrm{PSL}(2,\R)$. It suffices to remove the torsion elements of $\Gamma$, so that it acts freely on $\HH^2$. This can be achieved using Selberg's lemma.
\end{example}

Finding a GIB manifold with monodromy group $D$ is thus tantamount to finding a compact manifold $X$ together with a surjective morphism $\rho_0 : \pi_1(X) \to D$. Since $D$ is finitely generated (see Definition~\ref{AutGroup}), it is a quotient of a finitely generated free group $F$. This yields a surjective morphism $\varphi : F \to D$. Finally, since any finitely presented group can be realized as the fundamental group of a compact $4$-manifold, this proves Theorem~\ref{GIBmonodromy}.

The reader may prefer a more explicit description of the compact manifold $X$. We therefore provide a concrete construction and we prove Proposition~\ref{ExplicitMon}. Let $X$ be a closed surface of genus $g$, and assume that $D$ is generated by $g$ elements. There exists a surjective group morphism from $\pi_1(X)$ onto the free group on $g$ generators, and hence a surjective homomorphism from $\pi_1(X)$ to $D$.

By Theorem~\ref{structureThm0}, $\overline D^\mathrm{Zar}$ is an almost direct product $T \times S$ where $T$ is a torus and $S$ is semisimple. If $D$ is cocompact in its Zariski closure, as it is the case for $\mathrm{Aut}(n,H,V,b)$, we define $\tilde X$ as the symmetric space associated to $T \times S$, which is the quotient of $T \times S$ by the maximal compact factor of $S$, and we take $X := \tilde X / D$. Using Selberg's lemma, we may assume that $D$ has no torsion elements, thus $X$ is a genuine manifold and $\pi_1(X) = D$.

This discussion proves Proposition~\ref{ExplicitMon}.

\section{GIB manifolds and symmetric spaces} \label{SecSym}

This section is devoted to the study of GIB manifolds $\Gamma \backslash (\R^q \times N)$, where $N$ is a symmetric space of non-compact type. In this situation, we describe the Zariski closure of the group $\bar P$ in $\mathrm{Isom}(N)$ (see Section~\ref{SecMonodromy}) and prove that ${\bar P}^0$ is the unipotent radical of this group. This allows us to identify the monodromy subgroup of the GIB manifold and construct new examples of GIB manifolds.

\subsection{Minimal cocompact algebraic subgroups}
We require some preliminary results on cocompact subgroups of semisimple Lie groups. Most of the material of this section is well known, but we provide a self-contained discussion since we were unable to find a complete reference.

A real Lie group is called algebraic if it is a finite index subgroup of the real points of a complex algebraic group defined over $\R$.

Throughout this section, $G$ is a semisimple connected real Lie group with finite center, and $\mathfrak{g}$ is its Lie algebra. In particular, $G$ is isogenous to a linear algebraic subgroup of $\mathrm{GL}(\mathfrak{g})$ via the adjoint representation. Let $H$ be a connected, cocompact, algebraic subgroup of $G$.

Let $\theta$ be a Cartan involution on $G$, and let $\mathfrak g = \mathfrak k \oplus \mathfrak p$ be the induced Cartan decomposition. These data also allow us to define an Iwasawa decomposition $G = KAN$, or equivalently $\mathfrak g = \mathfrak k \oplus \mathfrak a \oplus \mathfrak n$, where $\mathfrak n = \bigoplus_{\alpha \in \Sigma^+} \mathfrak g_\alpha$ with $\Sigma^+$ a positive root system for the action of $\mathfrak a$ on $\mathfrak g$.

We call a  closed subgroup of $G$ a {\em minimal algebraic cocompact subgroup} if it is cocompact and does not contain a strictly smaller algebraic cocompact subgroup of $G$. The minimal algebraic cocompact subgroups can be described using the Iwasawa decomposition of $G$. This is a direct consequence of the proof of \cite[Lemma 3.2]{BaNe}, but we give some details here for the convenience of the reader.

\begin{lemma} \label{AN}
The group $H$ contains a conjugate of $AN$.
\end{lemma}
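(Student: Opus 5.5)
The approach is to apply a fixed point theorem for the solvable group $AN$ acting on the compact homogeneous space $G/H$, after linearizing that space through the algebraicity of $H$. By Chevalley's theorem, since $H$ is an algebraic subgroup of the algebraic group $\mathrm{Ad}(G) \subset \mathrm{GL}(\mathfrak g)$ (we work modulo the finite center, which changes nothing in the statement), there is a finite-dimensional rational representation $\pi : G \to \mathrm{GL}(W)$ and a line $\ell \in \mathbb P(W)$ whose stabilizer is $H$. One may take $W = \Lambda^{\dim \mathfrak h}\mathfrak g$ and $\ell = \Lambda^{\dim \mathfrak h}\mathfrak h$, which gives stabilizer $\mathrm{Nor}_G(\mathfrak h)$. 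Because $H$ is connected, this normalizer contains $H$ with the same identity component, and the small discrepancy can be handled at the end. The orbit map then gives a $G$-equivariant bijection $G/H \to G\cdot \ell \subset \mathbb P(W)$. Since $G/H$ is compact, the orbit $G \cdot \ell$ is compact and hence closed in $\mathbb P(W)$.

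Next we apply the Borel fixed point theorem, in its real form or via a direct argument. The group $AN$ is connected and solvable, and acts algebraically on $\mathbb P(W)$ through $\pi$. It preserves the compact set $G\cdot\ell$, so we want a fixed point of $AN$ in this orbit. This is the main obstacle, because over $\R$ Borel's theorem needs the variety to be complete over $\C$ (or Zariski closed), while we only know compactness in the real topology.

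I would first try a direct argument by induction along a composition series. $N$ is unipotent, and by Kolchin its fixed subspace $W^N$ is nonzero. More to the point, a unipotent group acting on a compact set of $\mathbb P(W)$ has a fixed point there: take a point $x$ of the compact set, note that the closure of $N\cdot x$ is compact, and use the fact that orbits of unipotent algebraic groups on affine pieces are closed, so a minimal closed $N$-invariant subset is a single orbit $N/N_x$ that is both compact and affine. Such an orbit must be a point. This yields a fixed point of $N$ in $G\cdot\ell$. The set of $N$-fixed points in $G\cdot \ell$ is then compact, and it is $A$-invariant because $A$ normalizes $N$. The group $A\cong \R^r$ acts diagonalizably on $W$. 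For a generic one-parameter subgroup $\exp(tX)$ with $X\in\mathfrak a$, the limit as $t\to+\infty$ of $\exp(tX)\cdot y$ exists in $\mathbb P(W)$ and is the projection onto the top weight component. This limit stays in the compact $N$-fixed set and is fixed by all of $A$ once $X$ is chosen regular, since the top weight space is then an $A$-eigenline. So we obtain $g\ell$ fixed by $AN$.

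Finally, if $AN$ fixes $g\ell$, then $g^{-1}ANg$ lies in $\mathrm{Stab}(\ell)$. Since $g^{-1}ANg$ is connected, it lies in the identity component of this stabilizer, which is $H$, so $H$ contains the conjugate $g^{-1}ANg$ of $AN$.
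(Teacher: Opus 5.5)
Your strategy is the same as the paper's: find a fixed point of $AN$ on the compact space $G/H$, then read off $g^{-1}ANg\subset H$. The paper simply quotes a real Borel-type fixed point lemma \cite[Lemma 3.1]{BaNe}. You instead prove the fixed point by hand: linearize, fix $N$, then fix $A$ by a limit along a generic direction of $\mathfrak a$. That would be more self-contained, but the linearization step contains a false claim, and your last line depends on it.

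\textbf{The gap.} With $W=\Lambda^{\dim\mathfrak h}\mathfrak g$ and $\ell=\Lambda^{\dim\mathfrak h}\mathfrak h$, the stabilizer is $\mathrm{Nor}_G(\mathfrak h)$. Its Lie algebra is the full normalizer $\{X\in\mathfrak g \mid [X,\mathfrak h]\subset\mathfrak h\}$, and connectedness of $H$ does not make its identity component equal to $H$. For $H=AN$ in a non-split $G$ one gets $\mathrm{Nor}_G(\mathfrak a\oplus\mathfrak n)^0=M^0AN\neq AN$. So your final step only gives $g^{-1}ANg\subset \mathrm{Nor}_G(\mathfrak h)^0$, not $g^{-1}ANg\subset H$.

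This is not cosmetic. Your choice of $W$ never uses that $H$ is algebraic, and the lemma fails without algebraicity. Take $G=\mathrm{SO}(3,1)^0$, nonzero $Y\in\mathfrak a$ and $Z\in\mathfrak m\simeq\mathfrak{so}(2)$, and set $H:=\exp(\R(Y+Z))N$.
\begin{itemize}
\item $H$ is closed, connected and cocompact: $G/H$ is a circle bundle over $G/MAN\simeq S^2$.
\item By dimension, a conjugate of $AN$ inside $H$ would equal $H$.
\item That is impossible: $\mathrm{Ad}(\exp t(Y+Z))$ has non-real eigenvalues on $\mathfrak n$, while elements of conjugates of $AN$ have only real ones.
\end{itemize}
Here $\mathrm{Nor}_G(\mathfrak h)^0=MAN\neq H$.

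\textbf{The repair.} Use Chevalley's theorem as stated, not this specialization. Take $W=\Lambda^d E_k$, where $E_k$ is the space of polynomials of degree $\le k$ on $\mathrm{End}(\mathfrak g)$ with $G$ acting by translations. Take $\ell=\Lambda^d\bigl(I(\overline H^{\mathrm{Zar}})\cap E_k\bigr)$ with $k$ large and $d=\dim\bigl(I(\overline H^{\mathrm{Zar}})\cap E_k\bigr)$. Then $\mathrm{Stab}_G(\ell)=\overline H^{\mathrm{Zar}}\cap G$, which contains $H$ with finite index. So $G/H\to G\cdot\ell$ is a finite covering rather than a bijection, which is harmless for compactness. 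Your last paragraph then goes through, and this is exactly where algebraicity of $H$ enters.

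\textbf{Two smaller points.}
\begin{itemize}
\item In the $N$-step, a projective $N$-orbit need not lie in an $N$-invariant affine chart, so ``orbits on affine pieces are closed'' needs justification. It is simpler to argue as follows. For $X\in\mathfrak n$ and $[w]$ in your compact invariant set, $\exp(tX)[w]\to[X^mw]$, where $m$ is maximal with $X^mw\neq 0$; this limit is fixed by $\exp(\R X)$. Then induct along a central series of $N$: the fixed set of a central one-parameter subgroup $U$ is nonempty, compact, $N$-invariant and contained in $\mathbb P(W^U)$, on which $N/U$ acts linearly and unipotently.
\item In the $A$-step, ``regular'' must mean that $X$ separates the finitely many $\mathfrak a$-weights of $W$, not merely the roots of $\mathfrak g$. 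Then the limit is $[w_\mu]$ with $w_\mu$ in a single weight space. This point is $A$-fixed even when that weight space is not a line.
\end{itemize}
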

\begin{proof}
The group $A$ is a maximal $\R$-split torus of $G$, and $N$ is the subgroup with Lie algebra generated by the positive root spaces of $G$ for the action of $A$. Consequently, $AN$ is a connected, solvable, algebraic, $\R$-split subgroup of $G$. Since $G/H$ is compact and $AN$ acts algebraically on $G/H$, we can apply \cite[Lemma 3.1]{BaNe} to show that this action has a fixed point. It follows that $AN$ is conjugate to a subgroup of $H$.
\end{proof}

Lemma~\ref{AN} has the following consequence:

\begin{corollary} \label{Commute}
The centralizer $C_G(H)$ of $H$ inside $G$ is an almost direct product between $C(G)$ and a compact factor of $G$.
\end{corollary}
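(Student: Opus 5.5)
By Lemma~\ref{AN}, after conjugating $H$ (which conjugates $C_G(H)$ and changes nothing in the statement), we may assume that $AN \subset H$. Then $C_G(H) \subset C_G(AN)$, and it suffices to show that $C_G(AN)$ is the almost direct product of $C(G)$ with a compact normal subgroup of $G$.

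\emph{Step 1: centralizing $A$.} First I would centralize $A$. By the structure theory attached to the Iwasawa decomposition, $C_G(A) = MA$, where $M = C_K(A)$ is compact. Write an element $g \in C_G(AN)$ as $g = ma$ with $m \in M$ and $a \in A$.

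\emph{Step 2: centralizing $N$.} Next I would centralize $N$, working at the Lie algebra level. The element $\mathrm{Ad}(ma)$ must act trivially on $\mathfrak n = \bigoplus_{\alpha\in\Sigma^+}\mathfrak g_\alpha$. Now $\mathrm{Ad}(m)$ is unitary for the inner product $B_\theta(X,Y)=-B(X,\theta Y)$ and preserves each root space $\mathfrak g_\alpha$, while $\mathrm{Ad}(a)$ acts on $\mathfrak g_\alpha$ by the positive scalar $e^{\alpha(\log a)}$. Taking the modulus of eigenvalues gives $\alpha(\log a)=0$ for all $\alpha \in \Sigma^+$. This applies to every root arising from a non-compact simple factor, so $a$ is trivial.

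\emph{Step 3: the component $m$.} It remains to control $m$. It centralizes $\mathfrak n$ and $\mathfrak a$, and it lies in $C_G(\theta\mathfrak n)$ via $\theta$. Indeed, $\theta(m)=m$ because $m \in K$, so $m$ also centralizes $\theta(\mathfrak n) = \bigoplus_{\alpha\in\Sigma^+}\mathfrak g_{-\alpha}$. In each non-compact simple factor $\mathfrak g_i$, the subspaces $\mathfrak n_i$ and $\theta\mathfrak n_i$ generate an ideal of $\mathfrak g_i$ that contains $[\mathfrak g_\alpha,\mathfrak g_{-\alpha}] \ni H_\alpha$. This ideal is nonzero, hence equal to $\mathfrak g_i$ by simplicity. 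So $\mathrm{Ad}(m)$ is trivial on every non-compact simple factor.

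\emph{Conclusion.} Decompose $G$ up to finite center as $G_{nc}G_c$, with $G_c$ the product of the compact simple factors. The previous steps show that $C_G(AN)$ consists of elements $g$ whose adjoint action is trivial on $\mathfrak g_{nc}$. Hence $g \in C(G_{nc})G_c$, and $C(G_{nc}) \cdot G_c = C(G)G_c$ up to finite groups. This exhibits $C_G(H)$ as a subgroup of $C(G)\cdot G_c$, that is, an almost direct product of $C(G)$ and a compact factor.

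\emph{Main obstacle.} The main subtlety is the interpretation of the statement: $C_G(H)$ is only a closed subgroup of $C(G)G_c$, not necessarily a full factor. I would handle this by noting that $C_G(H) \cap G_c$ is compact. Then $C_G(H) = C(G)\,(C_G(H)\cap G_c)$ up to finite index, using that $C(G) \subset C_G(H)$. This is presumably the intended sense of the statement. The other point needing care is the non-linear, finite-center setting: I would pass to $\mathrm{Ad}(G)$, where the centralizer computation is clean, and then pull back through the finite center.
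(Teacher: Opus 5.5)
Your proof is correct, but the second half takes a different route from the paper. Your Steps 1--2 make explicit what the paper settles in one line, namely that elements centralizing $AN$ lie in $K$ ``by the general theory of root spaces''.

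\textbf{The paper's route.} From there the paper argues geometrically. Since $G = KAN$, any $k \in C_K(AN)$ satisfies $Kank = Kkan = Kan$. So $k$ lies in the kernel of the right action of $G$ on $K\backslash G$. That kernel is automatically normal in $G$ and contained in $K$. Hence $C_K(AN)$ is a compact normal subgroup of $G$, and finiteness of the center finishes the argument.

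\textbf{Your route.} You instead show that $\mathrm{Ad}(m)$ fixes $\mathfrak a$, $\mathfrak n$ and $\theta\mathfrak n$, and hence all of $\mathfrak g_{nc}$.

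\textbf{Comparison.} The paper's argument gets normality for free and needs no statement about ideals. Yours is purely infinitesimal and yields the sharper equality $C_G(AN) = C(G)G_c$. You read the statement as the containment $C_G(H) \subset C(G)G_c$, with $C_G(H) = C(G)\,(C_G(H)\cap G_c)$. That is exactly what the paper's proof establishes. It is also all that is used later, to conclude that $T_0$ is central when $G$ has no compact factor.

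\textbf{One step to tighten.} The fixed-point set of the automorphism $\mathrm{Ad}(m)$ is only a subalgebra, not an ideal. So what you need is that the \emph{subalgebra} generated by $\mathfrak n$ and $\theta\mathfrak n$ is already an ideal; the ideal it generates is not enough. This does hold, for three reasons:
\begin{enumerate}
\item this subalgebra contains every restricted root space $\mathfrak g_\alpha$;
\item it is stable under $\mathrm{ad}(\mathfrak m \oplus \mathfrak a)$, where $\mathfrak m$ is the Lie algebra of $M$, because this preserves each $\mathfrak g_\alpha$;
\item $\mathfrak g = \mathfrak m\oplus\mathfrak a\oplus\bigoplus_{\alpha}\mathfrak g_\alpha$.
\end{enumerate}
Simplicity of each non-compact factor then gives that it equals $\mathfrak g_{nc}$, as you claim. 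The mention of $H_\alpha$ is unnecessary, since $\mathfrak n_i \neq 0$ already makes it nonzero in each non-compact factor.
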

\begin{proof}
By Lemma~\ref{AN}, we may assume, up to conjugation, that $H$ contains the subgroup $AN$ of $G$, where $KAN$ is the Iwasawa decomposition of $G$. In particular, the elements of $C_G(H)$ commute with $AN$. Therefore, they must be contained in $K$ by the general theory of root spaces in semisimple Lie groups.

We claim that $C_K(AN)$ is exactly the set of elements $k \in K$ acting trivially on $K \backslash G$ with respect to the right action. Indeed, if $k \in K$ commutes with $AN$, for any $(a,n) \in A \times N$, one has
\[
K a n k = K k a n = K a n,
\]
thus $k$ acts trivially on $K \backslash G$. Conversely, if $k \in K$ acts trivially on $K \backslash G$, then for any $a \in A$ one has $K a k = K a$, and $a k a^{-1} \in K$. But one has $[\mathfrak a, \mathfrak k] \subset \mathfrak p$, so taking $a$ small enough, we see that $a k a^{-1} = k$, and this remains valid for any $a \in A$. Hence, $k$ commutes with $A$. In particular, the conjugation by $k$ preserves the positive root spaces, and it preserves the group $N$. By the same argument as above, one has, for any $n \in N$, $n k n^{-1} \in K$, so $n k n^{-1} k^{-1} \in K$, but we also have $n k n^{-1} k^{-1} \in N$ and $N \cap K = \{ \mathrm{id} \}$. We deduce that $k$ commutes with $N$.

Now, for any $g \in G$ and $k \in C_K(AN)$, and for any $h \in G$:
\[
K h g k g^{-1} = K h g g^{-1} = K h,
\]
therefore $C_K(AN)$ is a compact normal subgroup of $G$. It is also closed by definition, and this concludes the proof because $G$ has finite center.
\end{proof}

We now prove that $AN$ is a maximal $\R$-split solvable algebraic subgroup of $G$.

\begin{lemma} \label{ANmax}
Let $Q$ be an algebraic, connected, cocompact, solvable $\R$-split subgroup of $G$. Then, $Q$ is conjugate to $AN$.
\end{lemma}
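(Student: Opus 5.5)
Since $Q$ is cocompact, connected and algebraic, Lemma~\ref{AN} applies to $Q$ in place of $H$. After conjugating, we may therefore assume that $AN \subset Q$, and it remains to prove the reverse inclusion $Q \subset AN$.

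The key input is that a connected solvable $\R$-split algebraic group is trigonalizable over $\R$: every element of $Q$ has real eigenvalues in any algebraic representation. We use this in the adjoint representation, where it means that $Q$ has no nontrivial compact subgroup beyond what is forced by the center. We then consider the compact homogeneous space $G/AN$. Since $KAN = G$, the group $K$ acts transitively on $G/AN$. Moreover, the stabilizer of the base point in $K$ is $K \cap AN$, which is trivial (or central).

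For the inclusion $Q \subset AN$ we argue as follows. Write $Q = (Q \cap K)AN$ using the Iwasawa decomposition: each $q \in Q$ factors uniquely as $q = k(q)\,a(q)\,n(q)$, and since $AN \subset Q$ we get $k(q) \in Q$. Hence $Q = (Q\cap K)\cdot AN$, and $Q \cap K$ is a compact subgroup of $Q$. The main obstacle is to show that $Q \cap K$ is trivial modulo the center. Every element of $Q\cap K$ acts in $\mathrm{Ad}$ with eigenvalues of modulus one, and these eigenvalues are real because $Q$ is $\R$-split. So each element of $Q\cap K$ is semisimple with eigenvalues $\pm 1$. Since $Q$ is connected and solvable, its maximal compact subgroups are tori. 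Writing $Q$ as the semidirect product of its unipotent radical with a maximal torus, split-ness says that this torus is $\R$-split and thus contains no nontrivial compact connected subgroup. The elements of $Q \cap K$ are semisimple, so they are conjugate into this split torus, where the only elements of finite order are those of order $2$. These elements lie in a finite group, and connectedness of $Q$ together with $Q=(Q\cap K)AN$ forces $(Q\cap K)^0$ to be trivial. Hence $AN$ has finite index in the connected group $Q$, and therefore $Q = AN$.

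If the semisimple/unipotent decomposition inside $Q\cap K$ causes trouble, we would argue at the Lie algebra level instead. Suppose $\mathfrak q \supsetneq \mathfrak a \oplus \mathfrak n$. Then $\mathfrak q$ contains a nonzero element of $\mathfrak k$ (from the decomposition above), and in particular some $X \in \mathfrak m = \mathfrak z_{\mathfrak k}(\mathfrak a)$ or some $X$ with a component in a negative root space. In the first case, $\mathrm{ad} X$ has purely imaginary nonzero eigenvalues, which contradicts $\R$-splitness. In the second case, $\mathfrak q$ together with $\mathfrak a$ contains $\mathfrak g_{-\alpha}$ and $\mathfrak g_\alpha$, hence a copy of $\mathfrak{sl}_2$, which contradicts solvability.
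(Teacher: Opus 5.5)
Your argument is correct in substance, but it follows a different route from the paper. Both start with the same reduction $AN\subset Q$ via Lemma~\ref{AN}.

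\textbf{The paper's route.} It works with the algebraic structure of $Q$:
\begin{enumerate}
\item it writes $Q=R_u(Q)\rtimes T$ and identifies the maximal torus $T$ with $A$, using that $A$ is a maximal $\R$-split torus of $G$;
\item it decomposes $\mathrm{Lie}(R_u(Q))\supset\mathfrak n$ into $A$-root spaces;
\item it excludes negative root spaces, since together with $\mathfrak n$ they would generate a copy of $\mathfrak{sl}(2,\R)$, contradicting solvability.
\end{enumerate}

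\textbf{Your route.} You use the global Iwasawa decomposition to get $Q=(Q\cap K)AN$. You then kill the compact part by splitness: for $k\in Q\cap K$, the eigenvalues of $\mathrm{Ad}(k)$ are real and of modulus one, so $\mathrm{Ad}(k)^2=1$.

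\textbf{Comparison.} Your route avoids the Levi decomposition and the conjugacy of maximal tori in $Q$. It only uses that every element of $Q$ has real $\mathrm{Ad}$-eigenvalues, which is how splitness enters for a solvable group; solvability is not otherwise used. The paper's route uses solvability essentially, and it yields the finer information $T=A$ and $R_u(Q)=N$.

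Your Lie-algebra fallback is a hybrid of the two: the $\mathfrak m$-case is handled by splitness, and the negative-root case by the paper's $\mathfrak{sl}(2,\R)$ argument. Two small points there:
\begin{enumerate}
\item You only obtain the element $\theta Y\in\mathfrak g_{-\alpha}$, not the whole root space. This is enough, since $Y,\theta Y,[Y,\theta Y]$ already span $\mathfrak{sl}(2,\R)$.
\item The weight components of $X$ lie in $\mathfrak q$ because $\mathfrak a\subset\mathfrak q$ makes $\mathfrak q$ $\mathrm{ad}(\mathfrak a)$-stable.
\end{enumerate}

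\textbf{A step to tidy.} You attribute the finiteness of $Q\cap K$ to connectedness of $Q$, or to conjugating elements into the split torus. Neither works as stated; for instance, a union of conjugates of $\{\pm1\}^r$ need not be finite. Finiteness follows directly from what you already proved. $\mathrm{Ad}(Q\cap K)$ is a compact Lie group in which every element squares to the identity. Its identity component would contain a one-parameter subgroup $\exp(tZ)$ with $\exp(2tZ)=1$ for all $t$, so it is trivial. Since $G$ has finite center, $Q\cap K$ is finite. Only then does connectedness enter: $Q$ is a finite union of closed cosets $kAN$, hence $Q=AN$.
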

\begin{proof}
By Lemma~\ref{AN}, we may assume, up to conjugation, that $Q$ contains $AN$. Moreover, since $Q$ is algebraic and solvable, it may be written as $Q = R_u(Q) \rtimes T$, where $T$ is a maximal torus of $Q$ \cite[Theorem 10.6 (4)]{Borel}. This torus is $\R$-split by assumption, and it is a maximal $\R$-split torus of $G$ because $A$ is. Up to conjugation, we may assume that $T = A$.

We also know that $N \subset R_u(Q)$, and $R_u(Q)$ decomposes as root spaces for the action of $A$. Consequently, the Lie algebra of $R_u(Q)$ contains a set of positive root spaces, which is $N$, and cannot contain a negative root space, because otherwise it would contain a subalgebra isomorphic to $\mathfrak{sl}(2,\R)$, which is not solvable. We deduce that $N = R_u(Q)$, proving the theorem.
\end{proof}

A final result needed later in our analysis concerns unimodularity.

\begin{proposition} \label{unimodular}
The torus $A$ contains an element acting by contraction on any subspace of $\mathfrak{n}$. In particular, this element acts by contraction on any $A$-invariant subgroup of $N$, and on any quotient of such a subgroup.
\end{proposition}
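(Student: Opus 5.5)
Take the element $a = \exp(X)$ with $X \in \mathfrak a$ in the \emph{negative} Weyl chamber, meaning $\alpha(X) < 0$ for every $\alpha \in \Sigma^+$. Such an $X$ exists, since the positive roots are exactly those that are positive on some regular element $X_0$ of $\mathfrak a$, and we may set $X := -X_0$. With this choice, $\mathrm{Ad}(a)$ acts on each root space $\mathfrak g_\alpha \subset \mathfrak n$ as multiplication by $e^{\alpha(X)}$, where $0 < e^{\alpha(X)} < 1$.

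Because $\mathfrak n = \bigoplus_{\alpha \in \Sigma^+} \mathfrak g_\alpha$, the map $\mathrm{Ad}(a)|_{\mathfrak n}$ is diagonalizable with all eigenvalues in $(0,1)$. For a subspace $W \subset \mathfrak n$, ``acting by contraction on $W$'' should be read as follows: if $W$ is $\mathrm{Ad}(a)$-invariant, the restriction is diagonalizable with eigenvalues of modulus $<1$. Then the iterates $\mathrm{Ad}(a)^k|_W$ tend to $0$ and the determinant is $<1$. For an arbitrary subspace $W$ the sensible meaning is $\|\mathrm{Ad}(a) w\| < \|w\|$ for all nonzero $w$. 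This holds for a norm in which the root spaces are orthogonal, for instance the one given by $B_\theta(Y,Z) = -B(Y,\theta Z)$, since root spaces are $B_\theta$-orthogonal. Hence $\mathrm{Ad}(a)$ is a strict contraction of $\mathfrak n$ for this norm, and therefore of every subspace.

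For the group statement, let $U \subset N$ be an $A$-invariant subgroup. We may take $U$ closed and connected, or pass to its identity component. Its Lie algebra $\mathfrak u$ is then $\mathrm{Ad}(A)$-invariant, hence a sum of its intersections with the root spaces. Conjugation by $a$ preserves $U$, and its differential is the restriction of $\mathrm{Ad}(a)$ to $\mathfrak u$. Since $N$ is simply connected nilpotent, $\exp : \mathfrak n \to N$ is a diffeomorphism intertwining $\mathrm{Ad}(a)$ with conjugation $c_a$. Consequently $c_a^k(u) \to e$ for every $u \in U$, uniformly on compacta, and $c_a$ multiplies the Haar measure of $U$ by $\det(\mathrm{Ad}(a)|_{\mathfrak u}) < 1$.

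For quotients, let $U' \subset U$ be a normal $A$-invariant subgroup. Then $c_a$ descends to $U/U'$, and its differential is the map induced on $\mathfrak u/\mathfrak u'$. That map is again diagonalizable with eigenvalues among the $e^{\alpha(X)} < 1$, so the contraction passes to the quotient.

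The only point requiring care is fixing the notion of ``contraction'' so that it is stable under taking quotients. For this reason I would phrase it spectrally (all eigenvalues in $(0,1)$, equivalently iterates converging to the identity) rather than via a fixed norm. With that reading the statement reduces to the root-space eigenvalue computation above, and the intended use, non-unimodularity, follows because $\det < 1$.
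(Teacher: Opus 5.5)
Your proof is correct and follows the same route as the paper: choose $X\in\mathfrak a$ with $\alpha(X)<0$ for all $\alpha\in\Sigma^+$, and use that $\mathrm{Ad}(\exp X)$ acts on each root space $\mathfrak g_\alpha\subset\mathfrak n$ by the scalar $e^{\alpha(X)}\in(0,1)$. Your extra details ($B_\theta$-orthogonality of the root spaces, the exponential diffeomorphism of $N$, and the induced map on $\mathfrak u/\mathfrak u'$ for quotients) make explicit what the paper's two-line argument leaves implicit.
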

\begin{proof}
Let $a \in \mathfrak {a} \setminus \{\mathrm{0}\}$ such that $\alpha(a) < 0$ for all roots $\alpha \in \Sigma^+$. Such an $a$ exists by definition of positive roots. In addition, $a$ acts on $N$ by a diagonal matrix with negative coefficients. Passing to the exponential, $\exp(a)$ acts as a contraction on any subspace of $\mathfrak n$, thus it is a contraction.
\end{proof}

\subsection{GIB manifolds with symmetric part $N$} We consider a GIB manifold $\Gamma \backslash (\R^q \times N)$ and we assume that $(N, g_N)$ is a symmetric space of non-compact type. This situation has already been studied in \cite{FZ2}, and some classification results were established in the rank $1$ case. Here, we prove Theorem~\ref{P0UnipotentR}.

We recall that $\Gamma \subset \mathrm{Sim}(\R^q) \times \mathrm{Isom}(N,g_N)$, and the closure of the projection of $\Gamma$ onto $\mathrm{Isom}(N,g_N)$ is denoted by $\bar P$.

The group $I := \mathrm{Isom}(N,g_N)^0$ is commensurable to $\mathrm{Isom}(N, g_N)$. Replacing $\Gamma$ by a finite index subgroup if necessary, we may assume that $\bar P \subset I$. Applying the adjoint representation $\mathrm{Ad} : I \mapsto \mathrm{GL}(\mathfrak{I})$, where $\mathfrak{I}$ is the Lie algebra of $I$, we identify $I$ with its image. Indeed, $\mathrm{Ad}$ is into because $I$ is semisimple and has trivial center. In addition, $\mathrm{Ad}(I)$ is an algebraic subgroup of $\mathrm{GL}(\mathfrak g)$.

We also recall that ${\bar P}^0$ is the identity component of $\bar P$. It is an abelian Lie group, and we denote by $\mathfrak{p}$ its Lie algebra. We consider the normalizer of ${\bar P}^0$ in $G$:
\begin{equation}
\mathrm{Nor}_G({\bar P}^0) := \{ g \in G \ \vert \ g {\bar P}^0 g^{-1} = {\bar P}^0 \} = \{ g \in G \ \vert \ g \mathfrak{p} g^{-1} \subset \mathfrak{p} \}.
\end{equation}
Since stabilizing $\mathfrak{p}$ is tantamount to satisfying polynomial equations, we deduce that $\mathrm{Nor}_G({\bar P}^0)$ is Zariski closed in $G$. We also notice that $\bar P \subset \mathrm{Nor}_G({\bar P}^0)$. Since $\bar P$ acts cocompactly on $N$, one has that $\mathrm{Nor}_G({\bar P}^0)$ is cocompact in $G$.

We aim to identify the group ${\bar P}^0$ as the unipotent radical of $\mathrm{Nor}_G({\bar P}^0)$. We start by showing:

\begin{proposition} \label{normalize}
Let $H$ be an abelian connected subgroup of $G$. Let $Q$ be an algebraic cocompact subgroup of $G$ normalizing $H$. Then, $H$ is contained in the unipotent radical of $Q$.
\end{proposition}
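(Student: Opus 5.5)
The plan is to replace $H$ by its Zariski closure, enlarge $Q$ so that this closure becomes normal, and then use Jordan decomposition. In the application $G = \mathrm{Isom}(N,g_N)^0$ with $N$ of non-compact type, so we may use that $G$ has trivial center and no compact factors. This is needed: a torus in a compact factor, with $Q = G$, would otherwise be a counterexample.

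Let $L := \overline{H}^{\mathrm{Zar}}$. It is a connected abelian algebraic subgroup of $G$, and it is normalized by $Q$ because normalizing $H$ is a polynomial condition. Hence $Q_1 := Q L$ is an algebraic subgroup of $G$. It is cocompact because it contains $Q$, and $L$ is normal in $Q_1$. Since $L$ is connected abelian algebraic, it splits as $L = L_u \times L_s$, with $L_u$ its unipotent part and $L_s$ a torus, and both are characteristic in $L$. Therefore both are normal in $Q_1$.

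First we kill the torus part. The group $Q_1^0$ normalizes the torus $L_s$, and by rigidity of tori a connected group normalizing a torus centralizes it, so $L_s$ commutes with $Q_1^0$. By Lemma~\ref{AN}, $Q_1^0$ contains a conjugate of $AN$. The proof of Corollary~\ref{Commute} only uses commutation with $AN$, so it shows that $L_s$ lies in $C(G)$ times a compact normal subgroup of $G$. Since $G$ has finite (here trivial) center and no compact factors, this product is finite, and as $L_s$ is connected we get $L_s = 1$. Hence $H \subset L = L_u$ is unipotent, and $L_u$ is a connected normal unipotent subgroup of $Q_1$, so $L_u \subset R_u(Q_1)$.

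Next we transfer this to $Q$. Conjugate so that $Q^0 \supset AN$; then $Q_1 \supset AN$ as well. The group $R_u(Q_1)$ is normalized by $A$, so its Lie algebra is a sum of subspaces of root spaces and of $\mathfrak{g}_0$. It contains no part of $\mathfrak{g}_0 = \mathfrak m \oplus \mathfrak a$, since that would produce non-unipotent elements. It also contains no part of a negative root space $\mathfrak g_{-\alpha}$. Indeed, it is an ideal of $\mathrm{Lie}(Q_1) \supset \mathfrak g_\alpha$, so it would then contain a copy of $\mathfrak{sl}(2,\R)$, exactly as in the proof of Lemma~\ref{ANmax}, contradicting nilpotency. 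Thus $R_u(Q_1) \subset N \subset Q^0$, and in particular $L_u \subset Q^0$. Now $L_u$ is a connected unipotent subgroup of $Q^0$ normalized by $Q$, so $L_u \subset R_u(Q^0) = R_u(Q)$. This gives $H \subset R_u(Q)$.

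The step I expect to need the most care is this root-space argument. One must check that an ideal of $\mathrm{Lie}(Q_1)$ consisting of nilpotent elements and stable under $\mathfrak a$ really lies in $\mathfrak n$. This includes handling $A$-weight spaces that are only partially contained, and the $\mathfrak{sl}(2,\R)$ bracket argument showing $[\mathfrak g_\alpha, \mathfrak g_{-\alpha}]$ contains a semisimple element of $\mathfrak a$. The torus step, by contrast, reduces cleanly to Corollary~\ref{Commute}.
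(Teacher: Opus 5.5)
Your proof is correct, and it takes a genuinely different route from the paper's.

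\textbf{The paper's route.} The paper fixes a Levi decomposition $Q = R_u(Q)\rtimes(S\times T)$ and writes $\overline{H}^{\mathrm{Zar}} = R_0\times T_0$. It projects $\overline{H}^{\mathrm{Zar}}$ to $S$, where the image is normal and abelian, hence finite. This gives $\overline{H}^{\mathrm{Zar}}\subset R(Q)$, and so $R_0\subset R_u(Q)$. It then conjugates $T_0$ into the central torus $T$ of the Levi factor by an element of $R_u(Q)$. The identity $[T_0,R_u(Q)]\subset T_0\cap R_u(Q)=\{1\}$ shows that $T_0$ centralizes $Q$, and Corollary~\ref{Commute} finishes. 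This argument tacitly uses $\overline{H}^{\mathrm{Zar}}\subset Q$, since otherwise one cannot project it to a Levi factor of $Q$. That is harmless in the application, because ${\bar P}^0\subset\bar P\subset L$.

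\textbf{What you do differently.} You replace the Levi decomposition and the conjugacy of maximal tori by rigidity of tori for $L_s$, and by the definition of the unipotent radical for $L_u$. This is shorter. The price is that, since you do not assume $H\subset Q$, you must pass to $Q_1=QL$ and come back through the root-space argument $R_u(Q_1)\subset N\subset Q^0$. That step is sound, and it can be shortened. For $0\neq X\in\mathrm{Lie}(R_u(Q_1))\cap\mathfrak g_{-\alpha}$, the bracket $[\theta X,X]$ is a nonzero element of $\mathfrak a$ lying in the same ideal. You have already excluded this, because elements of $\mathfrak m\oplus\mathfrak a$ are semisimple.

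\textbf{What each approach buys.} Your version proves the proposition as literally stated. If one grants $H\subset Q$, as the paper effectively does, your first two steps already finish the proof.

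\textbf{Two small remarks.} First, rigidity of tori is a statement about the Zariski identity component of the normalizer. It applies to the Lie identity component $Q_1^0$ because that component lies in the real points of the Zariski identity component. Second, your proposed counterexample with $Q=G$ does not work. $G$ cannot normalize a nontrivial torus $H$ inside a compact simple factor $G_c$, since connected abelian normal subgroups of a semisimple group are trivial. Take instead $Q=H\times G_{nc}$, where $G_{nc}$ is the product of the noncompact factors. This $Q$ is algebraic and cocompact, normalizes $H$, and has trivial unipotent radical, so the hypothesis excluding compact factors is indeed needed.
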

\begin{proof}
Let $R_u(Q) \rtimes U$ be a Levi decomposition of $Q$ (see \cite{Mos56} and \cite[\textsection 11.22]{Borel}). Then, there exists an isogeny $U \simeq S \times T$ where $T$ is a torus and $S$ is semisimple, and one has the isogenies
\begin{equation} \label{decompositionL}
Q \simeq R_u(Q) \rtimes U \simeq R_u(Q) \rtimes (T \times S) \simeq (R_u(Q) \rtimes T) \rtimes S \simeq R(Q) \rtimes S,
\end{equation}
Where $R(Q)$ is the radical of $Q$. The Zariski closure $\overline{H}^{\mathrm{Zar}}$ of $H$ in $G$ is an abelian group because $H$ is abelian \cite[\textsection 2.4, Corollary 2]{Borel}. In particular, it is a product $R_0 \times T_0$ where $T_0$ is a torus and $R_0$ is the nilpotent radical of $\overline{H}^{\mathrm{Zar}}$ \cite[Theorem 10.6 (3)]{Borel}.

We claim that $R_0 \subset R_u(Q)$. We first remark that the projection of $\overline{H}^{\mathrm{Zar}}$ onto $S$ is a normal abelian subgroup of $S$ because $\overline{H}^{\mathrm{Zar}}$ is normal in $Q$, therefore this projection is contained in the center of $S$, which is finite. It follows that $\overline{H}^{\mathrm{Zar}}$ is included in $R_u(Q) \rtimes T$ because this group is closed and $\overline{H}^{\mathrm{Zar}}$ is connected. The claim follows from the fact that $R_u(Q)$ is exactly the set of unipotent elements of $R(Q)$.

The torus $T_0$ is included in a maximal torus of $R(Q)$, and this maximal torus is conjugate to $T$ by an element $u \in R_u(Q)$ (see \cite[Theorem 34.4]{Hum} and \cite[Theorem 19.2]{Borel}), i.e. $u T_0 u^{-1} \subset T$. Moreover, $\overline{H}^{\mathrm{Zar}}$ is normal in $Q$, so $u T_0 u^{-1}$ is a maximal torus of $\overline{H}^{\mathrm{Zar}}$. Since $\overline{H}^{\mathrm{Zar}}$ is abelian, it has a unique maximal torus, and $T_0= u T_0 u^{-1} \subset T$.

Let $t \in T_0$ and $u \in R_u(Q)$. The group $T_0$ is normal in $Q$ and normalizes $R_u(Q)$, thus the commutator
\[
[t,u] := t u t^{-1} u^{-1}
\]
belongs to both $T_0$ and $R_u(Q)$. Since $T_0 \cap R_u(Q) = \{ \mathrm{id} \}$, one has $[t,u] = \mathrm{id}$. Hence, $T_0$ centralizes $R_u(Q)$, thus it centralizes $Q$. By Corollary~\ref{Commute}, $T_0$ commutes with the whole group $G$, because $G$ has no compact factor. It follows that $T_0$ is finite because $G$ is semisimple, and ${\bar P}^0$ is contained in $R_0 \subset R_u(Q)$ because it is connected.
\end{proof}

Let $L$ be an algebraic subgroup of $G$ such that $\bar P \subset L \subset \mathrm{Nor}_G({\bar P}^0)$. Applying Proposition~\ref{normalize} to our particular setting, we obtain that ${\bar P}^0 \subset R_u(L)$.

Since ${\bar P}^0$ is connected, we may assume that all the groups are connected for the usual topology in what follows.

So far, we know that ${\bar P}^0 \subset R_u(L)$. We would like to prove equality. We observe that the decomposition of equation~\eqref{decompositionL} applies to $L$ and we write
\begin{equation}
\mathrm{Nor}_G({\bar P}^0) = R_u(L) \rtimes (S \times T).
\end{equation}
We introduce the group
\begin{equation}
R' := R_u(\mathrm{Nor}_G({\bar P}^0))/{\bar P}^0,
\end{equation}
and we consider
\begin{equation}
L' := L / {\bar P}^0 = R' \rtimes (S \times T).
\end{equation}
The group $\Gamma_1 := \bar P / {\bar P}^0$ is a discrete subgroup of $L'$. Moreover, it is cocompact, thus it is a lattice in $L'$.

Let $S_K$ be the maximal connected compact normal subgroup of $S$. By a theorem proved by Mostow \cite[Lemma 3.9]{Mos71} and presented by Auslander \cite[Theorem 2]{Aus}, the group
\begin{equation}
\Gamma' := \Gamma_1 \cap (R' \rtimes S_K)
\end{equation}
is a cocompact lattice in $R' \rtimes S_K$.

The group $\bar P$ acts by conjugation on $R' \rtimes S_K$ and preserves the lattice $\Gamma'$. Consequently, $\bar P$ acts unimodularly on $R' \rtimes S_K$. Since $\bar P$ is cocompact in $L$, the kernel of the modular form $\Delta : L \to \R^*_+$ (for the action of $L$ on $R_u(L) / {\bar P}^0$) is cocompact, and $\Delta$ factors  through this kernel to a group homomorphism from a compact group to $\R^*_+$. This latter homomorphism is thus constant and $L$ acts unimodularly on $R' \rtimes S_K$. In particular, $T$ acts unimodularly on $R' \rtimes S_K$, and thus on $R'$ because it commutes with $S_K$.

Let $S = K_S A_S N_S$ be an Iwasawa decomposition of $S$ and let $T_s$ be the maximal $\R$-split subtorus of $T$. We consider the group
\begin{equation}
\mathbf{P} := R_u(L) \rtimes ((N_S \rtimes A_S) \times T_s) = (R_u(L) \rtimes N_S) \rtimes (A_S \times T_s).
\end{equation}
It is a cocompact, solvable, $\R$-split algebraic subgroup of $G$, thus, if $G = K_G A_G N_G$ is the Iwasawa decomposition of $G$, applying Theorem~\ref{ANmax} we see that $\mathbf{P}$ is equal to $A_G N_G$ up to conjugation, with $A_G = A_S \times T$ and $N_G = R_u(L) \rtimes N_S$.

By the previous discussion, $T_s$ acts unimodularly on $R_u(L') = R_u(L) / {\bar P}^0$ and $A_S$ acts unimodularly on $R_u(L)$. Thus, the maximal $\R$-split torus $A_G$ acts unimodularly on $R_u(L)$. However, Proposition~\ref{unimodular} provides us with an element $a \in A_G$ acting by contraction on $R_u(L')$. Therefore, $R_u(L')$ must be trivial, i.e. $R_u(L) = {\bar P}^0$.

From the previous discussion, we deduce that ${\bar P}^0$ is the unipotent radical of any algebraic group between ${\bar P}^0$ and $\mathrm{Nor}_G({\bar P}^0)$. In particular, by \cite[Corollary 3.2]{BoTi71}, $\mathrm{Nor}_G({\bar P}^0)$ is a parabolic subgroup of $G$. This concludes the proof of Theorem~\ref{P0UnipotentR}.

\begin{remark}
While writing this article, we remarked  that Theorem~\ref{P0UnipotentR} might be partially seen as a consequence of \cite[Theorem 1]{Moo}. Our proof is, in addition, really close to that of the latter article. Calvin C. Moore claims that there is an explicit list of all parabolic subgroups of semisimple Lie groups with abelian unipotent radical. This list could provide a complete classification of the symmetric spaces occurring in our setting. Yet, the article cited in \cite{Moo} does not seem to contain the list.
\end{remark}

The monodromy subgroup of the GIB manifold is contained in the reductive part $S \times T$. It remains to identify the elements of $\Gamma_1 = \bar P / {\bar P}^0$ acting trivially by conjugation on ${\bar P}^0$. We first notice that $\Gamma_1$ injects into a Levi subgroup of $L$, namely $S \times T$. Let $t$ be an element of $T_s$ acting trivially on ${\bar P}^0$. In this case, $t$ commutes with $A_G N_G$, and, by Corollary~\ref{Commute}, it is contained in a compact factor of $G$.

\begin{lemma} \label{trivialElements}
The elements of $T$ acting trivially on ${\bar P}^0$ form a compact group.
\end{lemma}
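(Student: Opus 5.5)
The set $T_0 := \{ t \in T \ \vert \ t x t^{-1} = x \text{ for all } x \in {\bar P}^0\}$ is a Zariski closed subgroup of $T$: acting trivially on $\mathfrak p$ is a polynomial condition. So it is an algebraic subgroup of the torus $T$. Over $\R$, its identity component splits as an almost direct product of an $\R$-split part $T_{0,s}$ and an $\R$-anisotropic part $T_{0,a}$. The anisotropic part of a real torus is compact. Since $T_0$ has finitely many components, it therefore suffices to show that $T_{0,s}$ is trivial (or at least compact).

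To handle $T_{0,s}$, place it inside the maximal $\R$-split subtorus $T_s$ of $T$. As established above, after conjugation $A_G = A_S \times T_s$ and $N_G = R_u(L) \rtimes N_S = {\bar P}^0 \rtimes N_S$, using $R_u(L) = {\bar P}^0$ from the proof of Theorem~\ref{P0UnipotentR}. Take $t \in T_{0,s}$. Then $t$ commutes with ${\bar P}^0$ by assumption. It commutes with $S$, hence with $A_S$ and $N_S$, because $T$ is central in the Levi factor $S \times T$. It commutes with $T_s$ because $T$ is abelian. Hence $t$ centralizes $A_G N_G$.

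Now apply the argument of Corollary~\ref{Commute}: an element centralizing $AN$ lies in $K$, and in fact in a compact normal subgroup of $G$. Since $G = I$ has no compact factor, as $N$ is of non-compact type, and has trivial center after $\mathrm{Ad}$, we get that $t$ is trivial. Alternatively, $t \in K \cap A_G$ already forces $t = \mathrm{id}$, because $A_G$ is $\R$-split and $K$ is compact. Thus $T_{0,s}$ is trivial, $T_0^0 = T_{0,a}$ is compact, and $T_0$ is compact.

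The main obstacle is the first step: making sure $T_s$ genuinely sits inside $A_G$, and that commuting with $S$ follows from the Levi decomposition rather than only up to isogeny. If only an isogeny is available, $t$ commutes with $S$ up to a finite central element. In that case I would pass to the Lie algebra, showing that $\mathrm{Lie}(T_0)$ centralizes $\mathfrak a_G \oplus \mathfrak n_G$, and conclude that this Lie algebra lies in $\mathfrak k$. Since $\mathrm{Lie}(T_0)$ also lies in $\mathrm{Lie}(T)$, whose split part is contained in $\mathfrak a_G \subset \mathfrak p$, the split component vanishes.
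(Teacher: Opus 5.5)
Your proof is correct and follows the paper's own argument. An element of the split part $T_s$ acting trivially on ${\bar P}^0$ centralizes $A_G N_G = (A_S \times T_s)({\bar P}^0 \rtimes N_S)$, so Corollary~\ref{Commute} disposes of it, and you make explicit the points the paper leaves implicit: $G$ has trivial center and no compact factor, so such an element is trivial, and the anisotropic part of a real torus is compact. Your closing worry about isogeny is unnecessary, since $T$ is the connected center of the reductive Levi factor and therefore commutes with $S$ exactly.
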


We now prove Theorem~\ref{SymZar}.

\begin{lemma} \label{discreteproj}
Let $(n,H,V,b)$ be GIB arithmetic data. Then, the projection of the automorphism group $\mathrm{Aut}(n,H,V,b)$ on $\mathrm{GL}(V)$ is discrete.
\end{lemma}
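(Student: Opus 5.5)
Since the restriction map $\mathrm{Aut}(n,H,V,b) \to \mathrm{GL}(V)$, $A \mapsto A|_V$, is a group homomorphism, its image is discrete as soon as the identity is an isolated point of it. We argue by contradiction: suppose there is a sequence $A_k \in \mathrm{Aut}(n,H,V,b)$, not acting as the identity on $V$, with $A_k|_V \to \mathrm{id}_V$. The plan is to use the restriction to $H$, which lies in the similarity group of $(H,b)$, together with integrality, to force $A_k$ to converge in $\mathrm{GL}(n,\R)$. Discreteness of $\mathrm{GL}(n,\Z)$ then makes $A_k$ eventually constant, and the constant value is pinned down by the Lemma above, which says that a rational matrix preserving $H \oplus V$ and restricting to the identity on $H$ is $I_n$.

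\textbf{Step 1: control of the similarity ratio.} Write $A_k|_H = \lambda_k O_k$, with $\lambda_k>0$ the ratio and $O_k$ a $b$-isometry. Since $A_k \in \mathrm{GL}(n,\Z)$, we have $|\det A_k| = 1$, so
\[
\lambda_k^{\dim H}\,|\det(A_k|_V)| = 1 .
\]
Because $\det(A_k|_V) \to 1$, this gives $\lambda_k \to 1$. Consequently the family $\{A_k|_H\}$ is relatively compact, and so is $\{A_k\}$ in $\mathrm{GL}(n,\R)$.

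\textbf{Step 2: eventual constancy.} Since $\{A_k\}$ is a relatively compact subset of the discrete set $\mathrm{GL}(n,\Z)$, it takes only finitely many values. Passing to a subsequence, we may assume $A_k = A$ is constant. Then $A|_V = \lim_k A_k|_V = \mathrm{id}_V$.

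\textbf{Step 3: identifying $A$.} Now $A$ is integral, preserves $H \oplus V$, and acts trivially on $V$. To apply the earlier Lemma we need the roles of $H$ and $V$ exchanged, that is, we need $V$ to be irrational, which is not available. Instead we argue directly. The matrix $B := A - I_n$ is integral and vanishes on $V$, so its image lies in $H$. The kernel $\ker B$ is a rational subspace containing $V$, hence $\operatorname{im} B$ is a rational subspace contained in $H$. By the strong irrationality $H \cap \Z^n = \{0\}$ (the Remark following the list of properties), $H$ contains no nonzero rational subspace, so $\operatorname{im} B = 0$ and $A = I_n$.

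Hence $A_k|_V = \mathrm{id}_V$ along the subsequence, contradicting the assumption that no $A_k$ acts as the identity on $V$. Therefore the identity is isolated in the image, and the projection of $\mathrm{Aut}(n,H,V,b)$ on $\mathrm{GL}(V)$ is discrete.

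The main obstacle is Step 1: one needs a bound on $\lambda_k$, and it comes from $|\det A_k| = 1$ and not from anything intrinsic to $H$. The other delicate point is Step 3, where irrationality of $V$ is unavailable and strong irrationality of $H$ has to be used in its place.
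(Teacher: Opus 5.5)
Your proof is correct and follows essentially the same route as the paper: $|\det A_k|=1$ forces the similarity ratio on $H$ to tend to $1$, so the $A_k$ stay in a compact set, and discreteness of $\mathrm{GL}(n,\Z)$ makes them take only finitely many values. Step 3 is not needed for the contradiction, since Step 2 already gives some $A_k$ with $A_k|_V=\mathrm{id}_V$; it is nevertheless a correct argument via $H\cap\Z^n=\{0\}$, and it supplies the justification the paper omits for its claim that the limiting element is $I_n$, i.e.\ that restriction to $V$ is injective on $\mathrm{Aut}(n,H,V,b)$.
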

\begin{proof}
Let $(h_n, v_n)_{n \in \N} \in (\mathrm{GL}(H) \times \mathrm{GL}(V))^\N$ be a sequence of elements lying in $\mathrm{Aut}(n,H,V,b)$, such that $(v_n)_\N$ converges to the identity. Then, $(\mathrm{det}(v_n))_{n \in \N}$ converges to $1$, and so does $(\mathrm{det}(h_n))_{n \in \N}$ because $\mathrm{Aut}(n,H,V,b) \subset \mathrm{GL}(n,\Z)$. Hence, $(\mathrm{det}(h_n))_{n \in \N}$ is bounded, and $(h_n)_\N$ is contained in a compact subset of $\mathrm{GL}(H)$ because it is a subsequence of $\mathrm{O}(b)$.

The sequence $(v_n)_\N$ is also contained in a compact neighbourhood of the identity, and we conclude that $(h_n, v_n)_\N$ is contained in a compact neighbourhood of $I_n$. But there are only finitely many elements of $\mathrm{Aut}(n,H,V,b)$ in such a neighbourhood. Consequently, $(h_n, v_n)_\N$ is constant after a certain rank, equal to $I_n$, implying the lemma.
\end{proof}

Let $\rho$ be the monodromy representation of the GIB manifold. Let $S = S_1 \times \ldots \times S_r$ be the decomposition of $S$ into simple factors. By Lemma~\ref{trivialElements}, the kernel of the action of $T \times S$ by conjugation on ${\bar P}^0$ is a product of some of these factors and a compact factor $T_K$ of $T$. Assume for simplicity that this kernel is $S_2 \times \ldots \times S_r \times T_K$. The group $\rho(\Gamma)$ might be viewed as a subgroup of $\mathrm{Sim} (\R^q) \times S_1 \times T$.

By Lemma~\ref{discreteproj}, the projection of $\rho(\Gamma)$ on $S_1 \times T$ is discrete. In addition, it coincides with the projection of $\Gamma$ on $S_1 \times T$, thus it is cocompact. The group $\overline{\rho(\Gamma)}^\mathrm{Zar}$ is a subgroup of $\mathrm{Sim} (\R^q) \times S_1 \times T$. Applying Lemma~\ref{discreteproj} again, the integer points of $\overline{\rho(\Gamma)}^\mathrm{Zar}$ projects as a discrete subgroup of $S_1 \times T$, containing the projection of $\rho(\Gamma)$. Since $\Gamma$ is isomorphic to its projection on $\mathrm{Isom}(N, g_N)$ (see the discussion preceding Proposition 2.1 in \cite{FZ2}), this shows that $\rho(\Gamma)$ has finite index in $\overline{\rho(\Gamma)}^\mathrm{Zar}(\Z)$. This last group is cocompact in $\overline{\rho(\Gamma)}^\mathrm{Zar}$, and so is $\rho(\Gamma)$.

It remains to prove the injectivity of the monodromy when $(N, g_N)$ is irreducible. In this case, the isometry group $\mathrm{Isom}(N,g_N)$ is simple. We write $S_1 \times \ldots \times S_r$ for the decomposition of $S$ into simple factors. If a noncompact factor $S_i$ of $S$, $1 \le i \le r$, acts trivially on ${\bar P}^0$, then the root system of $G$ splits, and $G$ is not simple. Consequently, the subgroup of $S$ acting trivially on ${\bar P}^0$ is a compact factor. By Corollary~\ref{Commute} it is a compact factor of $G$, but $G$ has no such factor since it is trivial. Therefore, the monodromy representation of the GIB manifold is injective.

This proves Theorem~\ref{SymZar}.

\section{Explicit examples of matrix groups} \label{SecExamples}

In this section, we give some explicit constructions GIB manifold where the factor $N$ is a symmetric space. We construct them using matrix groups. We also discuss the classification of GIB arithmetic data from the point of view of matrices.

\subsection{Examples of suitable symmetric spaces} According to Theorem~\ref{P0UnipotentR}, when the factor $N$ of the universal cover of a GIB manifold is a symmetric space of non-compact type, the normalizer of ${\bar P}^0$ is a parabolic subgroup of $\mathrm{Isom}(N)^0$, and ${\bar P}^0$ is its unipotent radical. Therefore, we should search for semisimple Lie groups with finite center admitting such parabolic subgroups. This section is devoted to the description of a few examples.

\begin{Ex} \label{CounterEx}
Let $m > 0$ be an integer. The group $\mathrm{SL}(m+1, \R)$ contains the parabolic subgroup
\begin{equation}
\mathbf P := \R^m \rtimes (\R^* \times \mathrm{SL}(m, \R)).
\end{equation}
If $(e_1, \ldots, e_{m+1})$ is a basis of $\R^{m+1}$, $\mathbf{P}$ might be viewed as the stabilizer of the flag $(\mathrm{Span}(e_1, \ldots, e_m), \mathrm{Span}(e_1, \ldots, e_{m+1}))$. It is explicitly given by the injections
\begin{equation*}
\begin{aligned} [c]
\R^m &\longrightarrow \mathrm{SL}(m+1, \R) \\
v &\longmapsto \left( \begin{matrix} I_m & v \\ 0 & 1 \end{matrix} \right)
\end{aligned}
,\quad
\begin{aligned} [c]
\R^* &\longrightarrow \mathrm{SL}(m+1, \R) \\
\lambda &\longmapsto \left( \begin{matrix} \vert \lambda \vert^{-1/m} I_m & 0 \\ 0 & \lambda \end{matrix} \right)
\end{aligned}
,\quad
\begin{aligned}
\mathrm{SL}(m, \R) & \longrightarrow \mathrm{SL}(m+1, \R) \\
A &\longmapsto \left( \begin{matrix} A & 0 \\ 0 & 1 \end{matrix} \right)
\end{aligned}.
\end{equation*}
In this situation, the symmetric space $N$ would be $\mathrm{SL}(m+1, \R)/\mathrm{SO}(m+1, \R)$, and ${\bar P}^0$ would be $\R^m$. However, we show that this does not give rise to a GIB manifold unless $m>1$. We prove this claim by contradiction, assuming the existence of a GIB manifold $M$ with universal cover $\R^q \times N$ ($q>1$) and ${\bar P}^0 = \R^m$.

The parabolic group $\mathbf P$ is the only algebraic, connected, cocompact subgroup of $\mathbf P$ having $\R^m$ as its unipotent radical. Therefore, it is equal to the Zariski closure of $\bar P$ by Theorem~\ref{P0UnipotentR}. We also notice that no element of $\R^* \times \mathrm{SL}(m, \R)$ centralizes $\R^m$, so the monodromy map is injective. In particular, the Zariski closure of $\Gamma := \bar P/{\bar P}^0$ is isomorphic to $\R^* \times \mathrm{SL}(m, \R)$, and it coincides with the Zariski closure of the monodromy group of $M$. Hence, Theorem~\ref{structureThm0} implies the existence of a cocompact arithmetic subgroup $D$ of $\mathrm{SL}(m, \R)$ acting on $\R^q \oplus \R^m$, preserving the decomposition, and whose restriction to $\R^q$ is relatively compact. By \cite[Proposition 5.5.12]{Mor}, this arithmetic group is obtained by restriction of scalars (see Proposition~\ref{restrictionscalars}).

Since the only faithful representation of $\mathrm{SL}(m, \R)$ on $\R^m$ is the canonical one, we deduce that the Zariski closure of $D$ is isogenous to a product
\[
\prod\limits_{\sigma \in S^\infty} (\mathrm{SL}(m,\R))^\sigma,
\]
where $S^\infty$ is the set of archimedean places of some algebraic number field $k$. But the terms of this product are either $\mathrm{SL}(m, \R)$ or $\mathrm{SL}(m, \C)$. This is possible only if $k = \Q$, implying $S \simeq \mathrm{SL}(m,\Z)$. This latter arithmetic subgroup is not cocompact if $m >1$, which is a contradiction.
\end{Ex}

\begin{Ex} \label{SOpq}
Let $p$,$r$ be two integers. The group $\mathrm{SO}(p+1,r+1)$ contains the parabolic subgroup
\begin{equation}
\mathbf P := \R^{p,r} \rtimes (\R^* \times \mathrm{SO}(p,r)).
\end{equation}
This is the stabilizer of a light-like line in $\R^{p+1,r+1}$, and we have the injections
\begin{equation*}
\begin{aligned}
\R^{p,r} &\longrightarrow \mathrm{SO}(p+1,r+1) \\
v & \longmapsto \left( \begin{matrix} 1 & - v^T \eta & - \frac{1}{2} \Vert v \Vert_{p,r} \\ 0 & I_{p+r} & v \\ 0 & 0 & 1 \end{matrix} \right)
\end{aligned},
\quad
\begin{aligned}
\R^* &\longrightarrow \mathrm{SO}(p+1,r+1) \\
\lambda & \longmapsto \left( \begin{matrix} \lambda & 0 & 0 \\ 0 & I_{p+r} & 0 \\ 0 & 0 & \lambda^{-1} \end{matrix} \right)
\end{aligned},
\end{equation*}
\begin{equation*}
\begin{aligned}
\mathrm{SO}(p,r) &\longrightarrow \mathrm{SO}(p+1,r+1) \\
A & \longmapsto \left( \begin{matrix} 1 & 0 & 0 \\ 0 & A & 0 \\ 0 & 0 & 1 \end{matrix} \right)
\end{aligned},
\end{equation*}
where we endowed $\R^{p+1,r+1}$ with the quadratic form given by the matrix
\begin{equation}
\eta := \left( \begin{matrix} 0 & 0 & 0 &1 \\ 0 & I_p & 0 & 0 \\ 0 & 0 & - I_r & 0 \\ 1 & 0 & 0 & 0 \end{matrix} \right).
\end{equation}

Here, the symmetric space is $N := \mathrm{SO}(p+1,r+1)/\mathrm{S}(\mathrm{O}(p+1) \times \mathrm{O}(r+1))$, and we can define a GIB manifold with these data. Indeed, let $b$ be the quadratic form on $\R^{p+r}$ defined by
\begin{equation}
b := x_1^2 + \ldots + x_p^2 - \sqrt{2} (x_{p+1}^2 + \ldots + x_{p+r}^2).
\end{equation}
Taking the algebraic field $k := \Q[\sqrt{2}]$ and denoting by $\sigma$ its non-trivial embeddings, we obtain an arithmetic subgroup of $\mathrm{SO}(p,r)$ by considering
\begin{equation}
\Gamma' := \{ (\sigma(A), A) \ \vert \ A \in \mathrm{SO}(b, \Z[\sqrt{2}]) \}.
\end{equation}
The group $\Gamma'$ acts naturally on $\R^{p+r} \oplus \R^{p,r}$ and its restriction to the first factor of this decomposition is contained in $\mathrm{SO}(\sigma(b),\R) \simeq \mathrm{SO}(p+r, \R)$. Moreover, $\Gamma'$ is conjugate to a subgroup of $\mathrm{GL}(2(p+r), \Z)$, so it preserves a lattice $\Gamma_0$ in $\R^{p+r} \oplus \R^{p,r}$. Pick $\lambda := \frac{2 + \sqrt{2}}{2}$ in the subgroup $\R^*$ of $\mathbf P$, and remark that it acts by multiplication by $\lambda$ on the unipotent radical $\R^{p,r}$ of $\mathbf P$. Let $A_0$ be the matrix acting as $(x,y) \mapsto \lambda^{-1}(x, \lambda y)$ on $\R^{p+r} \oplus \R^{p+r}$. Finally, we define the group
\begin{equation}
\Gamma := \Gamma_0 \rtimes (\langle A_0 \rangle \times \Gamma'),
\end{equation}
acting on $\R^{p+r} \times N$ in the following way:
\begin{equation}
\begin{aligned}
(\Gamma_0 \times \langle A_0 \rangle \times \Gamma') \times (\R^{p+r} \times N) & \longrightarrow \R^{p+r} \times N \\
((v,w), A_0^m, (\sigma(A), A)), (x, y) & \longmapsto (\lambda^{-m} \sigma(A) x + v, w \lambda^m A \cdot y)
\end{aligned},
\end{equation}
where we see $\Gamma_0$ as a subset of $\R^{p+r} \oplus \R^{p,r}$.

By definition, $\Gamma$ is a discrete subgroup of $\mathrm{Sim}(\R^{p+r}) \times \mathrm{Isom}(N)$ acting properly and cocompactly on $\R^{p+r} \times N$. However, it might not act freely, but if the action is not free, it means that its restriction to $N$ is not free, and equivalently there is a matrix in $\Gamma'$ whose restriction to $N$ is a torsion element. Selberg's lemma allows us to remove such elements from $\Gamma'$.

In conclusion, $\Gamma \backslash (\R^{p+r} \times N)$ is a GIB manifold.
\end{Ex}

\subsection{An explicit matrix description} In this section, we provide a way to construct GIB arithmetic data explicitly. We also explain how to find the polynomials defining the Zariski closure of the automorphism group. These results follow directly from the definition, but we believe it is important to write them down.

\subsubsection{Classification of GIB data} Let $(n, H, V, b)$ be GIB arithmetic data as presented in Definition~\ref{GIBdata}. Let $A$ be a matrix in $\mathrm{Aut}(n,H,V,b)$ whose restriction to $H$ is not a $b$-isometry.

It was proved in \cite[Section 5.2]{Fla25} that there exists a basis of the lattice $\Z^n$ in which $A$ is diagonal by blocks, and the characteristic polynomials of the blocks are irreducible over $\Q$. We write $A = \mathrm{Diag} (A_1, \ldots, A_m)$, where the $A_i$ are the blocks in the adapted basis.

Since $A$ restricts to a similarity on $H$, and $H$ is an irrational subspace of $\R^n$, there exists a positive real number $\lambda \neq 1$ such that each block has an eigenvalue of modulus $\lambda$ (otherwise $H$ would not be irrational) and $H$ is a subspace of the direct sum of the eigenspaces of $A$ with eigenvalue of modulus $\lambda$.

Since the blocks $A_i$ have irreducible characteristic polynomials, either they have the exact same eigenvalues or they have no common eigenvalue. We may assume that the blocks are sorted in the following way: there are indices $1 = i_1 < \ldots < i_{\ell+1} = m+1$ such that $A_i$ and $A_j$ have the same characteristic polynomial if and only if there exists $1 \le r \le \ell$ with $i_{r}\le i,j < i_{r+1}$. We now see the matrix $A$ as a matrix by blocks $\mathrm{Diag}(A'_1, \ldots, A'_\ell)$ where $A'_r = \mathrm{Diag}(A_{i_r}, \ldots, A_{i_{r+1}-1})$. Consequently, we may assume that all blocks $A_i$ have the same irreducible characteristic polynomial in what follows.

We recall that $A$ restricts to $H$ as a similarity of ratio $\lambda \neq 1$. Let $a_1, \ldots, a_s$ be the real eigenvalues of $A_1$ and let $a_{s+1}, \overline{a_{s+1}}, \ldots, a_{s+t}, \overline{a_{s+t}}$ be the complex ones. Identifying a complex number $a e^{i \theta}$ ($a, \theta \in \R_+ \times [0, 2 \pi]$), $\theta \neq 0$, to the real matrix
\begin{equation}
a \left( \begin{matrix} \cos(\theta) & -\sin(\theta) \\ \sin(\theta) & \cos(\theta) \end{matrix} \right),
\end{equation}
all the blocks $A_i$ are similar to $A_0 := \mathrm{Diag} (a_1, \ldots, a_{s+t})$ over $\C$. Hence, $A$ might be seen as the linear transformation $A_0 \otimes I_m$ acting on $\R^{s+2t} \otimes \R^m$. Let
\begin{equation}
\Lambda := \{ a_j \ \vert \ 1 \le j \le s+t, \ \vert a_j \vert = \lambda \},
\end{equation}
and for any $a \in \Lambda$, let $E_a$ be the eigenspace of $A_0$ for the eigenvalue $a$ (if $a$ is complex, this is a complex line, identified with $\R^2$). Then, there exist subspaces $F_a$ (possibly trivial), $a \in \Lambda$, of $\R^m$ such that
\begin{equation} \label{defH}
H = \bigoplus\limits_{a \in \Lambda} E_a \otimes F_a. 
\end{equation}
The subspace $H$ is irrational in $\R^{s+2t} \otimes \R^m$ if and only if $\bigoplus\limits_{a \in \Lambda} F_a + \Z^m$ is dense in $\R^m$.
For any $a \in \Lambda$, there exists a supplementary space $F_a'$ to $F_a$ in $\R^m$ such that
\begin{equation} \label{defV}
V = \bigoplus\limits_{a \in \Lambda} E_a \otimes F_a' \oplus \bigoplus\limits_{a \notin \Lambda} E_a.
\end{equation}

Conversely, let $A \in \mathrm{GL}(n,\Z)$, diagonal by blocks with all blocks having irreducible characteristic polynomials. Assume moreover that there exists $\lambda > 1$ such that each block has at least one eigenvalue of modulus $\lambda$. Now, we group the blocks of $A$ as before, and for simplicity we assume that all the blocks have the same irreducible characteristic polynomial. Then, we define $H$ and $V$ as in Equations~\eqref{defH} and \eqref{defV}, where the subspaces $F_a$ are chosen so that $\bigoplus\limits_{a \in \Lambda} F_a + \Z^m$ is dense in $\R^m$. Then, $H$ is irrational in $\R^{s+2t} \otimes \R^m$, $A$ preserves the decomposition $H \oplus V$, and the restriction of $A$ to $H$ is diagonalizable, with all its eigenvalues having modulus $\lambda$. Consequently, one may find an inner product $b$ on $H$ such that $A$ acts as a similarity of ratio $\lambda$ on $(H,b)$. We deduce that, for a suitable integer $n$, $(n,H,V,b)$ are GIB arithmetic data.

All the GIB arithmetic data are obtained by the construction above, so we have a complete description of these structures.

\subsubsection{Zariski closure of the automorphism group} Let $(n,H,V,b)$ be GIB arithmetic data. As we saw before, the Zariski closure of $\mathrm{Aut}(n,H,V,b)$ is defined over $\Q$, and so is its commutator subgroup. We explain here how one can find explicitly the polynomial equations that a matrix should satisfy in order to belong to $\mathrm{Aut}(n,H,V,b)$.

Let $A \in \mathrm{Aut}(n,H,V,b)$. Then, $A$ must preserve the decomposition $H \oplus V$. Denoting by $p$ the projector on $H$ parallel to $V$, this condition is equivalent to
\begin{equation} \label{cond1}
p A = A p.
\end{equation}
In addition, the restriction of $A$ to $H$ must preserve the inner product $b$. Equivalently,
\begin{equation} \label{cond2}
b (A \vert_H \cdot, A \vert_H \cdot) = \mathrm{det}(A \vert_H)^2 b.
\end{equation}
Consequently, a matrix $A \in \mathrm{GL}(n,\Z)$ belongs to $\mathrm{Aut}(n,H,V,b)$ if and only if it satisfies equations~\eqref{cond1} and \eqref{cond2}.

Let $E$ be the smallest $\Q$-vector subspace of $\mathrm{Mat}(n, \Q)$ such that $E \otimes_\Q \R$ contains $p$. Let $(A_1, \ldots, A_m)$ be a $\Q$-basis of $E$. Then, there are real numbers $\alpha_1, \ldots, \alpha_m$ such that
\[
p = \sum\limits_{i=1}^m \alpha_i A_i.
\]
The minimality condition on $E$ implies that the $\alpha_i$ are independent over $\Q$. Indeed, if
\[
\alpha_1 = \lambda_2 \alpha_2 + \ldots + \lambda_m \alpha_m
\]
with $\lambda_i \in \Q$, then
\[
p = \sum\limits_{i=2}^m \alpha_i (\lambda_i A_1 + A_i),
\]
and $\lambda_i A_1 + A_i \in \mathrm{Mat}(n, \Q)$. Therefore, for $A \in \mathrm{GL}(n,\Z)$, the condition $p A = A p$ is equivalent to
\begin{equation}
\sum\limits_{i=1}^m \alpha_i (A_i A - A A_i) = 0,
\end{equation}
which, in turn, amounts to
\begin{align} \label{finalcond1}
[A_i, A] = 0 \text{ for all $1 \le i \le m$}.
\end{align}

The second condition can be handled in the same way for matrices restricting to isometries of $(H,b)$. First, we view $b$ as a quadratic form on $H \oplus V$ vanishing on $V$. Hence, $b$ is represented by a symmetric matrix $q$ and condition~\eqref{cond2} is equivalent to $A \vert_H^T q A = \mathrm{det}(A \vert_H)^2 q$. Let $F$ be the minimal $\Q$-vector subspace of $\mathrm{Mat}(n,\Q)$ such that $F \otimes_\Q \R$ contains $q$. We fix a $\Q$-basis $(Q_1, \ldots, Q_r)$ of $F$, and proceeding exactly as before, we show that the condition $A^T q A = q$ is equivalent to
\begin{align} \label{finalcond2}
A^T Q_i A = Q_i \text{ for all $1 \le i \le r$}.
\end{align}

We conclude that $A \in \mathrm{GL}(n,\Z)$ is in $\mathrm{Aut}(n,H,V,b)$ and restricts to an isometry of $(H,b)$ if and only if $A$ satisfies both conditions~\eqref{finalcond1} and \eqref{finalcond2}. This gives us the precise rational polynomial equations that a matrix should satisfy in order to be in the automorphism group and to restrict to an isometry on $H$.

\end{document}